\documentclass[11pt,a4paper,twoside]{amsart}

\usepackage{amsfonts, amsmath,amssymb}
\usepackage[all]{xy}
\usepackage[lite]{amsrefs}
\usepackage{amsthm}
\usepackage{geometry}
\usepackage{graphicx}
\usepackage{wrapfig}
\usepackage{subfig}
\usepackage{appendix}
\usepackage{pstricks, pstricks-add}

\textwidth=38pc
\oddsidemargin=-12pt
\evensidemargin=-12pt

\hyphenation{Ver-dier}

\newcommand{\Hy}{\mathcal{H}}
\newcommand{\ringO}{\mathcal{O}}
\newcommand{\PSLO}{\mathrm{PSL}_2\left(\mathcal{O}_{-m}\right)}
\newcommand{\SLO}{\mathrm{SL}_2\left(\mathcal{O}_{-m}\right)}

\newcommand{\C}{{\mathbb{C}}}
\newcommand{\R}{{\mathbb{R}}}
\newcommand{\Z}{{\mathbb{Z}}}
\newcommand{\F}{{\mathbb{F}}}

\newcommand{\N}{{\mathbb{N}}}
\newcommand{\rationals}{{\mathbb{Q}}}

\newcommand*{\Homol}{\operatorname{H}}

\newcommand{\Graph}{{\mathcal{G}}}

\renewcommand{\geq}{\geqslant}

\newcommand{\mat}{\begin{pmatrix}a & b\\c & d\end{pmatrix}}
\newcommand{\trace}{\mathrm{tr}}
\newcommand{\im}{\mathrm{image}}
\newcommand{\Afour}{\mathcal{A}_4}
\newcommand{\Sthree}{\mathcal{S}_3}
\newcommand{\Kleinfourgroup}{\mathcal{D}_2}

\theoremstyle{plain}
\newtheorem{thm}{\bfseries Theorem}
\newtheorem{theorem}[thm]{\bfseries Theorem}
\newtheorem{Lem}[thm]{\bfseries Lemma}
\newtheorem{lemma}[thm]{\bfseries Lemma}
\newtheorem{sublem}[thm]{\bfseries Sub-Lemma}
\newtheorem{prop}[thm]{\bfseries Proposition}
\newtheorem{proposition}[thm]{\bfseries Proposition}
\newtheorem{cor}[thm]{\bfseries Corollary}
\newtheorem{corollary}[thm]{\bfseries Corollary}

\newtheorem{df}[thm]{\bfseries Definition}

\theoremstyle{remark}

\newtheorem{observation}[thm]{\bfseries Observation}

\newtheorem{rem}[thm]{\bfseries Remark}

\newtheorem{notation}[thm]{\bfseries Notation}

\newcommand{\circlegraph}{
\begin{pspicture}       (-0.3,-0.3)(0.3,0.3)
                        \pscircle(0,0.0){0.25}
                        \psdots(0.25,0.0)
\end{pspicture} }

\newcommand{\edgegraph}{ 
\begin{pspicture}(-0.3,-0.3)(0.3,0.3)
        \psdots(-0.2,0.0)
        \psdots(0.2,0.0)
        \psline(-0.2,0.0)(0.2,0.0)
\end{pspicture} }

\newcommand{\graphFive}{  
\begin{pspicture}(-0.3,-0.3)(0.3,0.3)
        \pscircle(0,0.0){0.25}
        \psdots(0,-0.25)
        \psdots(0,0.25)
        \psline(0,-0.25)(0,0.25)
\end{pspicture} }

\newcommand{\graphTwo}{  
\begin{pspicture}(-0.3,0.5)(0.85,1.1)
        \pscircle(0,0.8){0.25}
        \psdots(0.25,0.8)
        \psline(0.25,0.8)(0.8,0.8)
        \psdots(0.8,0.8)
\end{pspicture} }

\begin{document}

\title[The homological torsion of PSL$_2$ of the imaginary quadratic integers]{The homological torsion \\ of PSL$_2$ of the imaginary quadratic integers}
\author{Alexander D. Rahm}
\email{Alexander.Rahm@Weizmann.ac.il}
\urladdr{http://www.wisdom.weizmann.ac.il/\char126rahm/}
\address{Department of Mathematics, Weizmann Institute of Science, Rehovot, Israel}
\curraddr{Department of Mathematics, National University of Ireland at Galway}
\date{\today}
\subjclass[2000]{ Primary: 11F75, Cohomology of arithmetic groups. 22E40, Discrete subgroups of Lie groups. 57S30, Discontinuous groups of transformations. Secondary: 55N91, Equivariant homology and cohomology.
19L47, Equivariant $K$-theory. 55R35, Classifying spaces of groups and $H$-spaces.}
\maketitle

\begin{abstract}
The \emph{\mbox{Bianchi} groups} are the groups (P)$\mathrm{SL_2}$ over a ring of integers in an imaginary quadratic number field.
We reveal a correspondence between the homological torsion of the \mbox{Bianchi} groups and new geometric invariants, which are effectively computable thanks to their action on hyperbolic space.
We expose a novel technique, the \emph{torsion subcomplex reduction}, to obtain these invariants.
We use it to explicitly compute the integral group homology of the Bianchi groups.

Furthermore, this correspondence facilitates the computation of the equivariant $K$-homology of the Bianchi groups. 
By the Baum/Connes conjecture, which is satisfied by the \mbox{Bianchi} groups, we obtain the $K$-theory of their reduced $C^*$-algebras in terms of isomorphic images of their equivariant $K$-homology.
\end{abstract}

\section{Introduction}

Denote by $\rationals(\sqrt{-m})$, with $m$ a square-free positive integer, an imaginary quadratic number field, and by $\ringO_{-m}$ its ring of integers. 
The \emph{\mbox{Bianchi} groups} are the groups (P)$\mathrm{SL_2}(\ringO_{-m})$.
The \mbox{Bianchi} groups may be considered as a key to the study of a larger class of groups, the \emph{Kleinian} groups, which date back to work of Henri Poincar\'e~\cite{Poincare}.
In fact, each non-cocompact arithmetic Kleinian group is commensurable with some \mbox{Bianchi} group~\cite{MaclachlanReid}.
A wealth of information on the \mbox{Bianchi} groups can be found in the monographs \cites{Fine, ElstrodtGrunewaldMennicke, MaclachlanReid}.
These groups act in a natural way on hyperbolic three-space, which is isomorphic to the symmetric space associated to them.
The kernel of this action is the centre $\{ \pm 1 \}$ of the groups.
Thus it is useful to study the quotient of $\mathrm{SL_2}(\ringO_{-m})$ by its centre, namely $\mathrm{PSL_2}(\ringO_{-m})$.
In 1892, Luigi \mbox{Bianchi}~\cite{Bianchi} computed fundamental domains for this action when $m$ = 1, 2, 3, 5, 6, 7, 10, 11, 13, 15 and 19.
Such a fundamental domain has the shape of a hyperbolic polyhedron (up to a missing vertex at certain cusps, which represent the ideal classes of $\ringO_{-m}$), so we will call it the \emph{\mbox{Bianchi} fundamental polyhedron}. 
The computation of the \mbox{Bianchi} fundamental polyhedron has been implemented for all Bianchi groups~\cite{BianchiGP} in the language Pari/GP~\cite{Pari}. 

The images under $\mathrm{SL_2}(\ringO_{-m})$ of the facets of this polyhedron equip hyperbolic three-space with a cell structure.
In order to view clearly the local geometry, we pass to the \emph{refined cell complex}, which we obtain by subdiving this cell structure until the cell stabilisers fix the cells pointwise.
We will see how to exploit this cell complex in different ways, in order to see different aspects of the geometry of these groups.

 An essential invariant of groups is their homology (defined for instance in~\cite{Brown}). We can compute it for the Bianchi groups using the refined cell complex and the equivariant Leray/Serre spectral sequence which starts from the group homology of the stabilisers of representatives of the cells, 
and converges to the group homology of the \mbox{Bianchi} groups.
We will state in proposition~\ref{non-Euclidean results} the results for simple integer coefficients in the cases $m = 19,$ $43,$ $67$ and $163$, which are the non-Euclidean principal ideal domain cases. In contrast to these, the Euclidean principal ideal domain cases are already known from~\cite{SchwermerVogtmann}. For some results in class number 2, see~\cite{RahmFuchs}. For some results in cohomology, see \cite{BerkoveMod2}.

Throughout this article, we will use the ``number theorist's notation'' $\Z/n$ for the cyclic group of order $n$. The virtual cohomological dimension of the Bianchi groups is 2. In degrees strictly above 2, we express their homology in terms of the following Poincar\'e series at the primes $\ell = 2$ and $\ell = 3$: 
$$P^\ell_m(t) := \sum\limits_{q \thinspace = \thinspace 3}^{\infty} \dim_{\mathbb{F}_\ell} \Homol_q \left(\text{PSL}_2\bigl(\mathcal{O}_{-m}\bigr);\thinspace \Z/\ell \right)\thinspace t^q.$$ 
These two primes are the only numbers which occur as orders of non-trivial finite elements of ${\rm PSL}_2(\ringO_{-m})$. So it has been shown in \cite{RahmThesis} that the integral homology of these groups is, in degrees $q$ strictly above 2, a direct sum of copies of $\Z/2$ and $\Z/3$.

\begin{proposition} \label{non-Euclidean results}
The integral homology of ${\rm PSL}_2(\ringO_{-m})$, for \mbox{ $m \in \{ 19, 43, 67, 163\}$,}
is of isomorphism type $ \Homol_q({\rm PSL}_2(\ringO_{-m}); \thinspace \Z) \cong $
$ \begin{cases}
\Z^{\beta_1 -1} \oplus \Z/4\Z \oplus \Z/2\Z \oplus \Z/3\Z, & q = 2, \\
\Z^{\beta_1}, & q = 1,  \\ 
\end{cases} $ 
\\
where  \scriptsize
$\begin{array}{l|ccccc}
m        &  19 & 43 & 67 & 163 \\
\hline 
\beta_1  &  1   & 2  &  3 & 7   \\
\end{array}$ \normalsize
gives the Betti number $\beta_1$, and is in all higher degrees a direct sum of copies of $\Z/2 \Z$ and $\Z/3 \Z$, with the number of copies specified by the Poincar\'e series \begin{center}
$P^2_m(t) = \frac{-t^3(t^3 - 2t^2 + 2t - 3)}{(t-1)^2 (t^2 + t + 1 ) }$ and
$P^3_m(t) = \frac{-t^3(t^2 - t + 2)}{(t-1)(t^2+1)}$. \end{center}
\end{proposition}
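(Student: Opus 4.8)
The strategy is to read everything off from the action of $\Gamma_m := \mathrm{PSL}_2(\ringO_{-m})$ on hyperbolic three-space through its refined cell complex, which is available explicitly for each of the four values $m \in \{19, 43, 67, 163\}$ from the \mbox{Bianchi} fundamental polyhedron \cite{BianchiGP}. Since the virtual cohomological dimension of $\Gamma_m$ equals $2$, we have $\Homol_q(\Gamma_m; \rationals) = 0$ for $q \geq 3$, so $\Homol_q(\Gamma_m; \Z)$ is finite there and, by the result quoted from \cite{RahmThesis}, is a direct sum of copies of $\Z/2$ and $\Z/3$; it then only remains to count the summands. For the free parts: because $m \neq 1, 3$ the cusp stabilisers are the torsion-free translation lattices $\ringO_{-m} \cong \Z^2$, so the Borel/Serre bordification of the quotient $\Hy^3/\Gamma_m$ is a compact $3$-orbifold whose underlying space is a topological manifold with boundary a disjoint union of $2$-tori (one per ideal class, hence a single torus here). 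Its Euler characteristic therefore vanishes, which together with $\Homol_q(\Gamma_m; \rationals) = 0$ for $q \geq 3$ forces $\operatorname{rank} \Homol_2(\Gamma_m; \Z) = \beta_1 - 1$. The Betti number $\beta_1 = \operatorname{rank} \Homol_1(\Gamma_m; \Z)$, and the fact that $\Homol_1$ is torsion-free, are then read off from a finite presentation of $\Gamma_m$ extracted from the cell complex, which yields the tabulated values of $\beta_1$.

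For the torsion in degrees $q \geq 3$ the torsion subcomplex reduction does the work. For each prime $\ell \in \{2, 3\}$ one passes to the $\ell$-torsion subcomplex -- the union of those cells of the refined complex whose stabiliser contains an element of order $\ell$ -- and applies the reduction moves. In each of the four cases this produces a small disjoint union of the elementary graph types (isolated edges, circles, and the few mixed shapes occurring when a $\Kleinfourgroup$-, $\Sthree$- or $\Afour$-stabilised vertex is present), with the finite stabiliser attached to every cell. Feeding this reduced $\ell$-torsion subcomplex, together with its stabilisers, into the equivariant spectral sequence, each connected component contributes a computable family of $\F_\ell$-dimensions whose generating function is rational with poles only at roots of unity; summing the contributions of all components over $q \geq 3$ gives the generating series. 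These turn out to coincide for all four values of $m$, because the reduced $\ell$-torsion subcomplexes coincide, and assembling them produces the stated $P^2_m(t)$ and $P^3_m(t)$.

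What remains is the exact value of $\Homol_2(\Gamma_m; \Z)$. Here degree $2$ equals the virtual cohomological dimension, so the torsion need not be a direct sum of $\Z/2$'s and $\Z/3$'s: the equivariant spectral sequence -- equivalently, the maps induced on homology by the inclusions of the cell stabilisers -- can build a nontrivial extension, and one must resolve it to see that the $2$-primary part is $\Z/4 \oplus \Z/2$ rather than $(\Z/2)^{\oplus 3}$ or $\Z/8$, while the $3$-primary part is just $\Z/3$. I expect the principal obstacles to be precisely this extension problem in degree $2$, together with the concrete determination of the refined cell complexes and of their reduced torsion subcomplexes in these non-Euclidean cases, which is computationally heavier than in the Euclidean cases treated in \cite{SchwermerVogtmann}.
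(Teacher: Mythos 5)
Your overall route is the same as the paper's: extract the refined cell complex from the Bianchi fundamental polyhedron, feed the stabilisers into the equivariant spectral sequence, use the reduced $\ell$-torsion subcomplexes (which indeed coincide for all four values of $m$) to get the Poincar\'e series in degrees $q\geq 3$, and use the vanishing of the naive Euler characteristic of the quotient to get $\beta_2=\beta_1-1$. Your Borel--Serre justification of $\beta_0-\beta_1+\beta_2=0$ is a legitimate variant of the paper's citation of Vogtmann, and reading $\beta_1$ and the torsion-freeness of $\Homol_1$ off a presentation is also fine (the paper instead gets $\Homol_1\cong\Z^{\beta_1}$ from $E^2_{0,1}=0$ together with $E^2_{1,0}=\Z^{\beta_1}$, after noting that $d^2_{2,0}$ has zero target so the sequence degenerates at $E^2$).

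The genuine gap is in degree $2$. You correctly identify that the filtration leaves two candidates for the $2$-primary part, $(\Z/2)^3$ or $\Z/4\oplus\Z/2$, but you give no argument selecting the second; ``one must resolve it'' is exactly the step that has to be supplied, and it is the only place where the general machinery does not determine the answer. The paper closes it by running the same equivariant spectral sequence with $\Z/2$-coefficients, computing $\dim_{\F_2}\Homol_2({\rm PSL}_2(\ringO_{-m});\Z/2)=\beta_2+2$, and comparing via the Universal Coefficient Theorem: since $\Homol_1\cong\Z^{\beta_1}$ is free there is no $\operatorname{Tor}$-correction, so $\dim_{\F_2}(\Homol_2\otimes\F_2)=\beta_2+2$, which rules out $(\Z/2)^3$ (which would give $\beta_2+3$) and leaves $\Z/4\oplus\Z/2$. (Your mention of $\Z/8$ is a non-issue: the filtration forces $(\Z/2)^2$ to embed as a subgroup, which $\Z/8$ does not allow.) A second, smaller point: in degrees $q\geq 3$, counting the summands from the $E^\infty$-terms silently uses that $\Homol_q$ splits as the direct sum of the filtration quotients; this follows from the cited elementary-abelian-ness of the answer, but in the paper it is established independently (lemma~\ref{direct sum decomposition}, via the comparison of integral and mod-$2$ ranks in lemma~\ref{mod2ranks}), which is what makes the Poincar\'e-series count self-contained rather than circular.
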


We remark that in these four cases, the torsion in the integral homology of $ {\rm PSL}_2(\ringO_{-m})$ is of the same isomorphism type.
To understand this, we consider, for a prime $\ell$, the subcomplex of the orbit space consisting of the cells with elements of order $\ell$ in their stabiliser. We call it the \emph{$\ell$--torsion subcomplex}.
The following statement on how the homeomorphism type of the $\ell$--torsion subcomplex determines the equivariant spectral sequence is proven by the reduction of the torsion subcomplex carried out in~\cite{RahmThesis}.
This technique uses lemma \ref{geometricRigiditytheorem} to determine the possible type of stabiliser of a vertex $v$ with exactly two adjacent edges which have $\ell$--torsion in their stabilisers.
Then these two edges, together with $v$, are replaced by a single edge; and theorem~\ref{actionOnAxes} as well as some homological information about the finite groups in question are used to check that the induced morphisms on homology produce the same terms on the second page of the equivariant spectral sequence as before the replacement.

\begin{theorem} \label{HomologicalInvarianceUnderHomeomorphisms}
The $\ell$--primary part of the integral homology of ${\rm PSL}_2(\ringO_{-m})$ depends in degrees greater than $2$ (the virtual cohomological dimension) only on the homeomorphism type of the $\ell$--torsion subcomplex.
\end{theorem}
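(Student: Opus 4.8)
\emph{Proof strategy.} The plan is to read off the $\ell$--primary part of $\Homol_q(\Gamma;\Z)$, where $\Gamma := \PSLO$, from the equivariant (Leray/Serre) spectral sequence for the action of $\Gamma$ on the refined cell complex $X$: it has $E^1_{p,q} = \bigoplus_\sigma \Homol_q(\Gamma_\sigma;\Z)$, the sum over orbit representatives of the $p$--cells, and converges to $\Homol_{p+q}(\Gamma;\Z)$. In the refined complex every cell of dimension $\geq 2$ has trivial stabiliser, since an element of $\Gamma \subset \mathrm{Isom}^+(\Hy)$ fixing a two--dimensional subset pointwise is the identity; hence $E^1_{p,q} = 0$ for $p \geq 2$ and $q \geq 1$, while the bottom row is the cellular chain complex of the non--compact quotient $\Hy/\Gamma$, so $E^2_{3,0} = \Homol_3(\Hy/\Gamma;\Z) = 0$. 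A short chase then shows that for $n \geq 3$ only the two entries $E^2_{0,n} = E^\infty_{0,n}$ and $E^2_{1,n-1} = E^\infty_{1,n-1}$ contribute to $\Homol_n(\Gamma;\Z)$; and since, by the structure result quoted after Proposition~\ref{non-Euclidean results}, this group is above degree $2$ a direct sum of copies of $\Z/2$ and $\Z/3$, the resulting extension of $\ell$--primary parts splits:
\[
\Homol_n(\Gamma;\Z)_{(\ell)}\;\cong\;(E^2_{0,n})_{(\ell)}\oplus(E^2_{1,n-1})_{(\ell)}\qquad(n\geq 3).
\]
As $\Homol_q(\Gamma_\sigma;\Z)_{(\ell)} = 0$ for $q \geq 1$ whenever $\ell \nmid |\Gamma_\sigma|$, the $\ell$--primary part of each of these two $E^2$--terms is computed by the two--term complex
\[
C_\bullet(q):\quad \bigoplus_{e}\Homol_q(\Gamma_e;\Z)_{(\ell)}\xrightarrow{\,d^1\,}\bigoplus_{v}\Homol_q(\Gamma_v;\Z)_{(\ell)},
\]
in which $e$ and $v$ range over the edges and vertices of the $\ell$--torsion subcomplex $X_s$, and $d^1$ is built from the maps induced by the inclusions $\Gamma_e \hookrightarrow \Gamma_v$. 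The theorem thus reduces to proving that $\Homol_*(C_\bullet(q))$ depends, for every $q$, only on the homeomorphism type of the finite graph $X_s$.

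The next step is the torsion subcomplex reduction. Each edge of $X_s$ has stabiliser $\Z/\ell$, and by Lemma~\ref{geometricRigiditytheorem} — combined with Theorem~\ref{actionOnAxes}, which relates the edges of $X_s$ at a vertex to the axes through it of the order-$\ell$ elements of its stabiliser, and with the classification of the finite subgroups of $\Gamma$ as cyclic of order $1,2,3$, the Klein four group $\Kleinfourgroup$, $\Sthree$ or $\Afour$ — a vertex $v$ adjacent to exactly two edges of $X_s$ has stabiliser on a short list: $\Z/2$ or $\Sthree$ when $\ell = 2$, and $\Z/3$ or $\Afour$ when $\ell = 3$ (the remaining subgroups carry three or four distinct axes of order-$\ell$ elements through $v$, so meet at least three edges of $X_s$). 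For each $\Gamma_v$ on this list the subgroup $\Z/\ell$ is a Sylow subgroup with trivial Weyl group, so the inclusion induces an isomorphism $\Homol_q(\Z/\ell;\Z)_{(\ell)} \xrightarrow{\ \cong\ } \Homol_q(\Gamma_v;\Z)_{(\ell)}$ in every degree. This isomorphism allows one to amalgamate, inside $C_\bullet(q)$, the vertex $v$ with either of its two adjacent edges, so that those two edges together with $v$ get replaced by a single $\Z/\ell$--edge without changing $\Homol_*(C_\bullet(q))$ — equivalently, the induced maps on homology yield the same second page of the spectral sequence. Iterating this move leaves the reduced complex $\overline{X_s}$, which is $X_s$ with every valence--two vertex suppressed, and is therefore homeomorphic to $X_s$.

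Finally I would verify that $\overline{X_s}$ determines $C_\bullet(q)$ up to isomorphism. Its vertices have valence $1$ or $\geq 3$, and counting axes of order-$\ell$ elements once more forces the stabiliser from the valence alone: for $\ell = 3$ a valence--one vertex carries $\Sthree$ and no vertex of higher valence occurs, so $\overline{X_s}$ is a disjoint union of circles and of single edges; for $\ell = 2$ a valence--one vertex carries $\Afour$ and a valence--three vertex carries $\Kleinfourgroup$. Labelling every edge of $\overline{X_s}$ by $\Z/\ell$ and every vertex by its stabiliser type — a labelling dictated by the valences — one recovers $C_\bullet(q)$ up to isomorphism from this labelled graph, because an edge--stabiliser includes into an adjacent vertex--stabiliser as a Sylow subgroup and $\Homol_*(C_\bullet(q))$ is insensitive to the choice of such a subgroup. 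Hence $(E^2_{0,n})_{(\ell)} \oplus (E^2_{1,n-1})_{(\ell)}$, and with it the $\ell$--primary part of $\Homol_q(\Gamma;\Z)$ for $q > 2$, is a function of $\overline{X_s}$ up to graph isomorphism, i.e.\ of the homeomorphism type of $X_s$. The main obstacle is the middle step: establishing through Lemma~\ref{geometricRigiditytheorem} and Theorem~\ref{actionOnAxes} that a vertex with exactly two adjacent $\ell$--torsion edges can carry only the listed stabilisers, and then checking, in each of those cases, that the Sylow--$\ell$ restriction is an isomorphism on $\ell$--primary homology — which is precisely what makes the edge amalgamation a quasi--isomorphism and leaves the second page of the equivariant spectral sequence undisturbed by the reduction.
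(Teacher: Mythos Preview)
Your strategy matches the paper's: set up the equivariant spectral sequence, observe that above the bottom row only the columns $p=0,1$ carry $\ell$--torsion, split the resulting two--step filtration on $\ell$--primary parts, reduce the $\ell$--torsion subgraph by fusing edges through valence--two vertices, and then read off the complex $C_\bullet(q)$ from the combinatorics of the reduced graph. Your middle step is in fact argued more cleanly than in the paper: for the valence--two stabilisers $\Z/2,\Sthree$ (at $\ell=2$) and $\Z/3,\Afour$ (at $\ell=3$), the edge group $\Z/\ell$ really is a Sylow subgroup with trivial Weyl group, and this immediately gives the isomorphism on $\ell$--primary homology that makes the edge fusion harmless.

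There is, however, a genuine gap in your final paragraph. You assert that in the reduced graph every edge stabiliser $\Z/\ell$ sits as a Sylow subgroup of the adjacent vertex stabiliser, so that $C_\bullet(q)$ is insensitive to the choice of inclusion. At $\ell=2$ this is false for both remaining vertex types: in $\Afour$ the Sylow $2$--subgroup is $\Kleinfourgroup$, and in $\Kleinfourgroup$ itself the three copies of $\Z/2$ are pairwise non--conjugate and induce \emph{different} maps on $\Homol_q(\Kleinfourgroup;\Z)$ (Lemma~\ref{D2blocks}). What actually makes the argument go through is that the three edge stabilisers at a $\Kleinfourgroup$--vertex are precisely the three order--two subgroups (corollary to Lemma~\ref{geometricRigiditytheorem}) and $\mathrm{Aut}(\Kleinfourgroup)\cong\Sthree$ permutes them in every possible way, so a graph isomorphism can always be lifted to a chain isomorphism by twisting at these vertices; the paper packages this as Sub--lemma~\ref{E2 terms} together with the explicit block columns of Lemma~\ref{D2blocks}. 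Two smaller slips in the same vein: your parenthetical that the ``remaining'' $\ell=2$ stabilisers meet at least three edges is wrong for $\Afour$ --- its three order--two axes are conjugate, giving valence one --- so just invoke Lemma~\ref{geometricRigiditytheorem} directly; and a circle component cannot have all its valence--two vertices suppressed, so the reduced graph still carries one such vertex there, which the paper treats separately in Sub--lemma~\ref{key lemma}.
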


This theorem follows directly from the stronger lemmata~\ref{E2invarianceUnderHomeomorphisms} and~\ref{direct sum decomposition}.
\begin{figure} \caption{Results for the 3--torsion homology, expressed in $ P^3_m(t)$}

\label{3torsionsubgraphs} 

\small
\begin{tabular}{c|c|c}
$m$ specifying the \mbox{Bianchi} group  &  \begin{tabular}{c} {3-torsion subcomplex,} \\ {homeomorphism type} \end{tabular} 
& $ P^3_m(t)$ 
\\
\hline &  & \\
\begin{tabular}{c} 
2, 5, 6, 10, 11, 14, 15, 17, 22, 23,  \\
 29, 34, 35, 38, 41, 46, 47, 51, 55, 58, \\
 59, 71, 83, 87, 95, 115, 119, 123, 131, \\
 155, 159, 167, 179, 187, 191, 235, 267 \end{tabular}
& \circlegraph &$\frac{-2t^3}{t-1}$ 
\\&  &\\
26, 42, 143, 195 & \circlegraph \circlegraph & $2 \left(\frac{-2t^3}{t-1}\right)$ 
\\&  &\\
30, 107 & \circlegraph \circlegraph \circlegraph & $3 \left(\frac{-2t^3}{t-1} \right)$ 
\\ &  &\\
33 & \circlegraph \circlegraph \circlegraph \circlegraph & $4 \left(\frac{-2t^3}{t-1} \right)$ 
\\ &  &\\
\begin{tabular}{c} 
{1, 7, 19, 31, 43, 67, 79, 103,} \\
{127, 139, 151, 163, 199, 571  }\end{tabular} & 
\edgegraph
 & $\frac{-t^3(t^2 - t + 2)}{(t-1)(t^2+1)}$ 
\\&  &\\
13, 37, 91, 403, 427 &  
\edgegraph \edgegraph  & 
$2  \left( \frac{-t^3(t^2-t+2)}{(t-1)(t^2+1)} \right)$ 
\\&  &\\
39, 111, 183, 643 & \circlegraph \edgegraph & $\frac{-2t^3}{t-1} +\frac{-t^3(t^2 - t + 2)}{(t-1)(t^2+1)}$ 
\\ &  &\\
259 & \circlegraph \edgegraph \edgegraph & $\frac{-2t^3}{t-1} +2\left(\frac{-t^3(t^2 - t + 2)}{(t-1)(t^2+1)}\right)$
\\ &  &\\
21 & \circlegraph \circlegraph \edgegraph \edgegraph & $2 \left(\frac{-2t^3}{t-1}\right) +2\left(\frac{-t^3(t^2 - t + 2)}{(t-1)(t^2+1)}\right)$ 
\end{tabular}
\normalsize
\end{figure}
Examples for this theorem are given for the prime $\ell = 3$ and thirty-six Bianchi groups in figure~\ref{3torsionsubgraphs} (for $\ell = 2$, see \cite{RahmThesis}). In all the non-Euclidean principal ideal domain cases, the 2--torsion, and respectively 3--torsion subcomplexes are homeomorphic, which explains the results in proposition~\ref{non-Euclidean results}. 
Underlying theorem~\ref{HomologicalInvarianceUnderHomeomorphisms}, there is the following correspondence between the non-trivial cyclic subgroups of the vertex stabilisers and the geodesic lines around which they effect a rotation, and which we shall call \emph{rotation axes}.

\begin{theorem} \label{actionOnAxes}
For any vertex $v$ in hyperbolic space, the action of its stabiliser on the set of rotation axes passing through $v$, induced by the action of the \mbox{Bianchi} group, is equivalent to the conjugation action of this stabiliser on its non-trivial cyclic subgroups.
\end{theorem}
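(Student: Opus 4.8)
The plan is to produce an explicit bijection between the set of rotation axes through $v$ and the set of non-trivial cyclic subgroups of the stabiliser of $v$, and then to observe that this bijection automatically intertwines the two actions, because a rotation axis is nothing but a fixed-point set. First I would recall the pertinent hyperbolic geometry: a non-trivial elliptic isometry $g$ of hyperbolic three-space rotates around a unique geodesic line, namely its fixed-point set $\mathrm{Fix}(g)$, and this line is common to all non-trivial powers of $g$; hence every non-trivial finite cyclic subgroup $C$ of ${\rm PSL}_2(\ringO_{-m})$ carries a well-defined rotation axis $\ell_C := \mathrm{Fix}(C)$. Conversely, the isometries of hyperbolic three-space fixing a given geodesic line $\ell$ pointwise form the circle of rotations about $\ell$, so inside the discrete group ${\rm PSL}_2(\ringO_{-m})$ the pointwise stabiliser of $\ell$ is a finite cyclic group. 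Since the only orders occurring for non-trivial torsion elements of the Bianchi groups are $2$ and $3$, that pointwise stabiliser has order $1$, $2$ or $3$; in particular, when it is non-trivial it has exactly one non-trivial subgroup. (For the same reason the finite subgroups of ${\rm PSL}_2(\ringO_{-m})$ are the cyclic groups $\Z/2$ and $\Z/3$, the Klein four group $\Kleinfourgroup$, the symmetric group $\Sthree$ and the alternating group $\Afour$, all of whose non-trivial cyclic subgroups have order $2$ or $3$.)

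Next I would define the map $\Phi$ sending a non-trivial cyclic subgroup $C$ of $\mathrm{Stab}(v)$ to its axis $\ell_C$. Because $C$ fixes $v$ and $\ell_C = \mathrm{Fix}(C)$, the point $v$ lies on $\ell_C$, so $\Phi$ indeed takes values among the rotation axes passing through $v$. I would then check that $\Phi$ is a bijection onto that set. It is surjective: a rotation axis through $v$ equals $\ell_{C'} = \mathrm{Fix}(C')$ for some non-trivial cyclic subgroup $C' \leq {\rm PSL}_2(\ringO_{-m})$, and $v \in \mathrm{Fix}(C')$ then forces $C' \leq \mathrm{Stab}(v)$, with $\Phi(C') = \ell_{C'}$. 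It is injective: if $\ell_{C_1} = \ell_{C_2}$, then $C_1$ and $C_2$ are both non-trivial subgroups of the pointwise stabiliser of that common axis, which has only one non-trivial subgroup by the first paragraph, so $C_1 = C_2$.

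Finally I would verify that $\Phi$ is equivariant. For any $g \in {\rm PSL}_2(\C)$ and any subgroup $H$ one has $\mathrm{Fix}(gHg^{-1}) = g \cdot \mathrm{Fix}(H)$, since $gHg^{-1}$ fixes a point $x$ exactly when $H$ fixes $g^{-1}x$. Applying this with $g \in \mathrm{Stab}(v)$ and $H = C$ a non-trivial cyclic subgroup of $\mathrm{Stab}(v)$ — note $gCg^{-1} \leq g\,\mathrm{Stab}(v)\,g^{-1} = \mathrm{Stab}(v)$ because $gv = v$ — gives $\Phi(gCg^{-1}) = \mathrm{Fix}(gCg^{-1}) = g \cdot \mathrm{Fix}(C) = g \cdot \Phi(C)$. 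Hence $\Phi$ is an isomorphism of $\mathrm{Stab}(v)$-sets from the conjugation action on non-trivial cyclic subgroups to the action on rotation axes through $v$, which is precisely the equivalence asserted by the theorem.

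I expect the only step with real content to be the injectivity of $\Phi$: it genuinely uses the arithmetic fact that torsion elements of Bianchi groups have order only $2$ or $3$ — ruling out a single geodesic about which both an order-$2$ and an order-$3$ rotation act, which would manufacture an element of order $6$ — together with the elementary observation that the pointwise stabiliser of a geodesic line is cyclic. Surjectivity and equivariance are then formal, flowing directly from the identification of rotation axes with fixed-point sets.
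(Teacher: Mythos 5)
Your proof is correct and follows essentially the same route as the paper: the paper first establishes (in its Lemma~\ref{axes-cyclicSubgroupsBijection}) the bijection between $\Gamma_v$--axes and non-trivially acting cyclic subgroups of $\Gamma_v$ --- your map $\Phi$ is just the inverse of that bijection --- and then derives equivariance from the identity that the pointwise stabiliser of $\gamma\cdot l$ is $\gamma\dot{\Gamma}_l\gamma^{-1}$, which is your $\mathrm{Fix}(gCg^{-1})=g\cdot\mathrm{Fix}(C)$. Your explicit injectivity argument via the fact that torsion elements have order $2$ or $3$ usefully spells out a point the paper leaves implicit, but it is the same proof in substance.
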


We shall give the proof of theorem \ref{actionOnAxes} in section \ref{The action of the Bianchi groups}.

\subsection{Organisation of the paper.}
We begin with generalities of the action of SL$_2(\C)$ on hyperbolic 3-space.
In section~\ref{The action of the Bianchi groups}, we examine
details that are specific to restricting the action to the subgroup $\SLO$.
This allows us to prove theorem~\ref{actionOnAxes}. 
In section~\ref{refined cell complex}, we describe the refined cellular complex: an $\SLO$--invariant cellular structure on hyperbolic 3-space, in which all cell stabilisation is pointwise; and we elaborate a method to check that this property holds.
In section~\ref{Rigidity of the action on the refined cell complex}, we establish the properties of the action on the refined cell complex that we will need in section~\ref{axisGraphReduction} in order to prove theorem~\ref{HomologicalInvarianceUnderHomeomorphisms}.
In section~\ref{Floege cellular complex}, we join some cusps to the refined cellular complex, and retract it equivariantly onto the 2-dimensional, co-compact Fl\"oge cellular complex.
We further explain in section~\ref{Equivariant Euler characteristic} how we use the equivariant Euler characteristic to check the correctness of quotient spaces computed for the cellular complexes.
In section~\ref{The maps induced on cohomology by finite subgroup inclusions}, we recollect some homological statements on the finite subgroups of the Bianchi groups.
In section~\ref{SpecSeq}, we recall a spectral sequence which we can compute with information on this complex, and which converges to the homology of the Bianchi group in question.
In sections~\ref{3primary} and~\ref{2primary}, we give a characterisation of the differentials on the first page of the spectral sequence.
We conclude our description of the spectral sequence with the statement that for $q \geq 3$, \quad $\Homol_q(\PSLO;\Z)$ is a direct sum of copies of $\Z/2$ and $\Z/3$.
In section~\ref{axisGraphReduction}, we examine the torsion subgraphs and prove a stronger version of theorem~\ref{HomologicalInvarianceUnderHomeomorphisms}.
In section~\ref{results}, we establish the results of proposition~\ref{non-Euclidean results}, and in section~\ref{Results for the homological torsion} those of figure~\ref{3torsionsubgraphs} as well as the corresponding table in 2-torsion.
Finally, we give results for the special linear groups in section~\ref{SL2}, and for equivariant $K$-homology and operator $K$-theory in section~\ref{K-theory}.
We append the construction of a free resolution for the alternating group on four objects in section~\ref{Appendix: The low terms of a free resolution for the alternating group on 4 objects}.

\subsection*{Acknowledgements}
The author would like to thank Graham Ellis for his helpful cooperation in the computations of section \ref{SL2}. He is grateful for discussions with Nicolas Bergeron, and especially for the careful lecture of the referee which led him to establish corollary~\ref{normaliser in the stabiliser} and lemma~\ref{sublemma}, replacing a previous, less conceptual proof of lemma~\ref{geometricRigiditytheorem}.
He would like to thank \mbox{Philippe Elbaz-Vincent} and the people acknowledged in \cite{RahmThesis} for their help.
This article is dedicated to the memory of Fritz Grunewald.

\section{The action on hyperbolic space} \label{geometricRigidity}

Consider hyperbolic three-space, for which we will use the upper-half space model $\Hy$.
As a set, $$ \Hy = \{ (z,\zeta) \in \C \times \R \medspace | \medspace \zeta > 0 \}. $$

It is diffeomorphic to the symmetric space of $G := \mathrm{SL}_2(\C)$, and the natural action of $G$ which we obtain this way on $\Hy$ can be expressed by the following formula of Poincar\'e \cite{Poincare}.
\\
For $\gamma = \scriptsize \mat \normalsize \in G$, the action of $\gamma$ on $\Hy$ is given by $\gamma \cdot (z,\zeta) = (z',\zeta')$, where
$$ \zeta' = \frac{|\det \gamma|\zeta}{|cz-d|^2 +\zeta^2|c|^2},$$
$$ z' = \frac{\left(\thinspace\overline{d -cz}\thinspace\right)(az-b) -\zeta^2\bar{c}a}{|cz-d|^2 +\zeta^2|c|^2}.$$

Let us recall  Felix Klein's classification of the elements in $\mathrm{SL}_2(\C)$, which passes to $\mathrm{PSL}_2(\C)$.
\begin{df}
An element $\gamma \in \mathrm{SL}_2(\C)$, $\gamma \neq \pm 1$, is called 
\emph{loxodromic} if its trace is not a real number. Otherwise, it is called 
$\begin{cases}
\rm parabolic, &  |\trace (\gamma)| = 2, \\
\rm hyperbolic, &  |\trace (\gamma)| > 2 \\
\rm elliptic, &  |\trace (\gamma)| < 2 
\end{cases}$
.
\end{df}
We find the geometric meaning of this classification in the following proposition, which summarizes some results of Felix Klein \cite{binaereFormenMathAnn9}, which are worked out in more detail in his lectures notes edited by Robert Fricke.

\begin{prop}[\cite{ElstrodtGrunewaldMennicke}] \label{EGM1.4}
Let $\gamma$ be a non-trivial element of $\mathrm{SL}_2(\C)$. Then the following holds:
\begin{itemize}
\item $\gamma$ is parabolic if and only if $\gamma$ has exactly one fixed point in $\partial \Hy$.
\item $\gamma$ is elliptic if and only if it has two fixed points in $\partial \Hy$ and if the points on the geodesic line in $\Hy$ joining these two points are also left fixed. The action of $\gamma$ is then a rotation around this line.
\item $\gamma$ is hyperbolic if and only if it has two fixed points in $\partial \Hy$ and if any circle in $\partial \Hy$ through these points together with its interior is left invariant. The line in $\Hy$ joining these two fixed points is then left invariant, but $\gamma$ has no fixed point in $\Hy$.
\item $\gamma$ is loxodromic in all other cases. The action of $\gamma$ has then two fixed points in $\partial \Hy$ and no fixed point in $\Hy$. The geodesic joining the two fixed points is the only geodesic in $\Hy$ which is left invariant.
\end{itemize}
\end{prop}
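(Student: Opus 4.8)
The plan is to conjugate $\gamma$ into Jordan normal form and then read off all four dichotomies from Poincar\'e's formula. I would first record that conjugation is harmless: for $g \in \mathrm{SL}_2(\C)$ the element $g\gamma g^{-1}$ acts on $\Hy$ as $g\circ\gamma\circ g^{-1}$, and $g$ is an isometry of $\Hy$ whose boundary extension is a conformal self-homeomorphism of $\partial\Hy$ sending geodesics to geodesics and boundary circles, with either of their two sides, to boundary circles with the corresponding side; so every statement in the proposition is conjugation-invariant, as is $\trace(\gamma)$. Hence I may assume $\gamma$ equals $\pm\begin{pmatrix}1 & 1\\ 0 & 1\end{pmatrix}$ (the non-diagonalisable case, which is exactly $\trace(\gamma) = \pm 2$) or $\begin{pmatrix}\lambda & 0\\ 0 & \lambda^{-1}\end{pmatrix}$ with $\lambda \neq 0, \pm 1$; the conjugating matrix is put into $\mathrm{SL}_2(\C)$ by rescaling it by a square root of its determinant.

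\textbf{The two model computations.} For the triangular form Poincar\'e's formula gives $\gamma\cdot(z,\zeta) = (z-1,\zeta)$, a Euclidean translation whose only fixed point in $\partial\Hy$ is $\infty$ and which fixes no point of $\Hy$; here $\trace(\gamma) = \pm 2$. For the diagonal form the same formula simplifies to $\gamma\cdot(z,\zeta) = (\lambda^2 z,\ |\lambda|^2\zeta)$, whose boundary fixed points are precisely $0$ and $\infty$, joined by the vertical geodesic $A = \{(0,\zeta): \zeta > 0\}$. Comparing the two, I would conclude the first bullet: for a non-trivial $\gamma$, having exactly one fixed point in $\partial\Hy$ is equivalent to being conjugate to a Jordan block, hence to $\trace(\gamma)=\pm2$, hence to being parabolic; this can also be checked directly, the boundary fixed points being the roots of $cz^2 + (d-a)z - b = 0$, of discriminant $\trace(\gamma)^2-4$. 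It remains to analyse $\gamma = \mathrm{diag}(\lambda,\lambda^{-1})$.

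\textbf{Case analysis in $\lambda$.} I would split according to $|\lambda|$ and to whether $\lambda \in \R$, using the elementary fact that $\trace(\gamma) = \lambda + \lambda^{-1}$ is real of modulus $<2$ exactly when $|\lambda|=1$ (with $\lambda \neq \pm1$), is real of modulus $>2$ exactly when $\lambda \in \R\setminus\{0,\pm1\}$, and is non-real otherwise. If $|\lambda|=1$, then $\gamma$ fixes $A$ pointwise and is the rotation $z\mapsto\lambda^2 z$ about it, and $|\trace(\gamma)|<2$; conversely, fixing $A$ pointwise forces $|\lambda|^2=1$, so this is exactly the elliptic case. If $\lambda \in \R\setminus\{0,\pm1\}$, then $\gamma$ is the dilation $(z,\zeta)\mapsto(\lambda^2 z,\lambda^2\zeta)$, has $|\trace(\gamma)|>2$, fixes no point of $\Hy$, and preserves every line through $0$ in $\C$ together with each of its half-planes, hence every boundary circle through $0$ and $\infty$ together with its interior; conversely, a M\"obius map fixing $0$ and $\infty$, necessarily $z\mapsto cz$, that preserves every such circle with a chosen side must have $c > 0$, forcing $\lambda^2>0$ and then (purely imaginary $\lambda$ being ruled out) $\lambda\in\R$, so this is exactly the hyperbolic case. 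In the only case left, $\lambda\notin\R$ with $|\lambda|\neq1$, one has $\trace(\gamma)\notin\R$ and $\gamma$ is loxodromic; it fixes no point of $\Hy$, and $A$ is its unique invariant geodesic because an invariant geodesic must have an invariant pair of endpoints, whereas $z\mapsto\lambda^2z$ fixes only $0$ and $\infty$ and, as $\lambda^4\neq1$, has no $2$-cycle.

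\textbf{Main obstacle.} The delicate points, I expect, are the two converse implications that separate the hyperbolic from the loxodromic case: pinning down exactly which M\"obius maps fixing $0$ and $\infty$ preserve each boundary circle through them together with a chosen side, and verifying uniqueness of the invariant geodesic. The rest is bookkeeping with Poincar\'e's formula and with the map $\lambda\mapsto\lambda+\lambda^{-1}$.
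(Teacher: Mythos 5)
Your proposal is correct. Note, however, that the paper does not prove this proposition at all: it is quoted from the monograph of Elstrodt--Grunewald--Mennicke and attributed to Klein, so there is no internal argument to compare against. What you have written is the standard self-contained proof by reduction to Jordan normal form, and it holds up: the conjugation-invariance of every clause (including the ``circle together with its interior'' clause, via the circle-preserving property of M\"obius maps on $\partial \Hy$), the two model computations from Poincar\'e's formula (you correctly adapted to the paper's sign convention $az-b$, $cz-d$, getting the translation $z \mapsto z-1$ and the map $(z,\zeta)\mapsto(\lambda^2 z, |\lambda|^2\zeta)$), and the elementary analysis of $\lambda \mapsto \lambda+\lambda^{-1}$ that matches $|\lambda|=1$, $\lambda\in\R$, and the remaining case to the elliptic, hyperbolic and loxodromic trace conditions. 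The two converses you single out as delicate are indeed the only places requiring an argument rather than a computation, and your treatment of both is sound: a M\"obius map fixing $0$ and $\infty$ and preserving every line through $0$ with a chosen half-plane must be $z\mapsto cz$ with $c>0$, and an invariant geodesic forces an invariant endpoint pair, which for $z\mapsto\lambda^2 z$ with $\lambda^4\neq 1$ can only be $\{0,\infty\}$. One very minor point of completeness: in the first bullet, besides the discriminant of $cz^2+(d-a)z-b$ you should dispose explicitly of the degenerate case $c=0$ (where $\infty$ is automatically fixed and the count of finite fixed points depends on whether $a=d$); this is exactly the case already covered by your Jordan-block computation, so nothing is missing in substance.
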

In \cite{Ratcliffe}, it is stated that the parabolic elements do not have a fixed point in the interior of~$\Hy$.
So by excluding the parabolic, hyperbolic and loxodromic cases, we obtain the following corollary concerning elliptic elements.

\begin{cor} \label{corollaryTo1.4}
Let $\gamma$ be a non-trivial element of $\mathrm{SL}_2(\C)$, admitting a fixed point $v \in \Hy$. Then $\gamma$ fixes pointwise a geodesic line through~$v$, and performs a rotation around this line.
\end{cor}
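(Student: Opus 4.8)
The plan is to deduce the statement directly from Proposition~\ref{EGM1.4} by elimination, which is exactly what the sentence preceding the corollary suggests. First I would observe that $\gamma$ is non-trivial and, by hypothesis, has a fixed point $v$ in the interior $\Hy$. According to Klein's classification, $\gamma$ falls into exactly one of the four classes: parabolic, hyperbolic, loxodromic, or elliptic. I would now rule out the first three.

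For the parabolic case, invoke the remark (attributed to \cite{Ratcliffe}) that parabolic elements have no fixed point in the interior of $\Hy$; since $v \in \Hy$ is fixed, $\gamma$ is not parabolic. For the hyperbolic case, Proposition~\ref{EGM1.4} states that a hyperbolic element has no fixed point in $\Hy$, contradicting the existence of $v$; so $\gamma$ is not hyperbolic. The same proposition tells us that a loxodromic element has no fixed point in $\Hy$ either, so $\gamma$ is not loxodromic. Hence $\gamma$ must be elliptic.

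Finally, applying the elliptic clause of Proposition~\ref{EGM1.4} to $\gamma$: an elliptic element has two fixed points in $\partial \Hy$, and every point on the geodesic line $L$ in $\Hy$ joining them is fixed; moreover the action of $\gamma$ on $\Hy$ is a rotation around $L$. Since $v$ is a fixed point of $\gamma$ in the interior and (from the classification of isometries of $\Hy$) the fixed-point set of an elliptic element in the interior is precisely that geodesic line, $v$ lies on $L$. Thus $\gamma$ fixes pointwise a geodesic line through $v$ and performs a rotation around it, which is the assertion.

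I do not anticipate a serious obstacle here, since the corollary is essentially a repackaging of Proposition~\ref{EGM1.4}. The one point needing a word of care is the claim that the fixed geodesic line passes through the given $v$: one must note that an elliptic isometry of $\Hy$ fixes no interior point outside its rotation axis, so that $v$ is forced onto $L$ rather than merely asserting the existence of some fixed geodesic somewhere. This follows because the fixed-point set of an orientation-preserving isometry of $\Hy$ that is not the identity is a single geodesic (or a point of $\partial\Hy$), a standard fact about $\mathrm{PSL}_2(\C)$ acting on hyperbolic three-space.
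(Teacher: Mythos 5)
Your proposal is correct and follows exactly the paper's route: the corollary is obtained by eliminating the parabolic case via the remark from \cite{Ratcliffe} and the hyperbolic and loxodromic cases via Proposition~\ref{EGM1.4}, then reading off the elliptic clause. Your extra remark that $v$ must lie on the rotation axis (since the interior fixed-point set of an elliptic element is exactly that geodesic) is a sensible point of care that the paper leaves implicit.
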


Proposition \ref{EGM1.4} and corollary \ref{corollaryTo1.4} again pass to $\mathrm{PSL}_2(\C)$, because the center $\{\pm 1\}$ \mbox{of $\mathrm{SL}_2(\C)$} acts trivially on $\Hy$.

\subsection{The action of the Bianchi groups} \label{The action of the Bianchi groups}
Let $m$ be a squarefree positive integer and $\rationals(\sqrt{-m}\thinspace)$ be an imaginary quadratic number field with ring of integers $\mathcal{O}_{-m}$. We restrict the above described action to the Bianchi group \mbox{$\Gamma := \mathrm{SL_2}(\mathcal{O}_{-m}) \subset \mathrm{SL}_2(\C)$}. 

We will work more closely with the geodesic lines of corollary \ref{corollaryTo1.4}, as they are central objects in the following statements on the geometry of the torsion elements of the Bianchi groups.
We will call the matrices $1$ and $-1$ in $\mathrm{SL}_2(\C)$ the \emph{trivially acting elements}, because they constitute the kernel of the action.
\begin{df}
We will call a geodesic line passing through the point $v \in \Hy$ a \emph{$\Gamma_v$--axis}, if there exists a non-trivially acting element of $\Gamma$ fixing this line pointwise.
\end{df}
We will call a group \emph{non-trivially acting} if it admits a non-trivially acting element. Denote by $\Gamma_v$ the stabiliser in $\Gamma$ of the cell $v$.
\begin{lemma} \label{axes-cyclicSubgroupsBijection}
For any vertex $v \in \Hy$, there is a bijection between the \mbox{$\Gamma_v$--axes} and the non-trivially acting cyclic subgroups of the stabiliser $\Gamma_v$.
It is given by associating to a \mbox{$\Gamma_v$--axis} the subgroup in $\Gamma$ of rotations around this axis.
\end{lemma}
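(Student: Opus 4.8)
The plan is to construct the bijection explicitly and check it is well-defined in both directions. Given a $\Gamma_v$--axis $L$ through $v$, let $R_L \subset \Gamma$ be the set of elements of $\Gamma$ fixing $L$ pointwise. First I would verify $R_L$ is a subgroup: it is the intersection of $\Gamma$ with the pointwise stabiliser of $L$ in $\mathrm{SL}_2(\C)$, hence a group. By definition of a $\Gamma_v$--axis, $R_L$ contains a non-trivially acting element, so $R_L$ is a non-trivially acting subgroup of $\Gamma_v$ (every element of $R_L$ fixes $v \in L$, so $R_L \subseteq \Gamma_v$). The key point is that $R_L$ is cyclic: by corollary~\ref{corollaryTo1.4}, each non-trivial $\gamma \in R_L$ acts as a rotation around $L$, so $R_L$ maps into the rotation group $\mathrm{SO}(2) \cong \R/\Z$ of the geodesic $L$; since $\Gamma$ is discrete, the image is a finite subgroup of $\mathrm{SO}(2)$, hence cyclic, and the map $R_L \to \mathrm{SO}(2)$ has kernel the trivially acting elements $\{\pm 1\} \cap \Gamma$. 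One then has to take a small amount of care about whether one works in $\Gamma = \SLO$ or its quotient: in $\mathrm{SL}_2(\C)$ the group $R_L$ is a central extension of a finite cyclic group by $\{\pm 1\}$, but such an extension of a cyclic group is again cyclic (equivalently, the statement is cleanest in $\mathrm{PSL}_2$, where $R_L$ is literally finite cyclic, and then lifts back). This gives the forward map $L \mapsto R_L$.

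For the reverse direction, let $C \leq \Gamma_v$ be a non-trivially acting cyclic subgroup and pick a non-trivially acting generator-or-element $\gamma \in C$ (if $C$ is infinite cyclic this cannot happen in a Bianchi group since torsion elements have finite order, so $C$ is finite; in any case $\gamma \neq \pm 1$ and $\gamma$ fixes $v \in \Hy$). Since $\gamma$ fixes the interior point $v$, proposition~\ref{EGM1.4} forces $\gamma$ to be elliptic, and corollary~\ref{corollaryTo1.4} provides a unique geodesic line $L_\gamma$ through $v$ fixed pointwise by $\gamma$; uniqueness is exactly the "only geodesic left invariant / fixed" clause in the elliptic case of proposition~\ref{EGM1.4}. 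I would then check this line does not depend on the choice of non-trivial element of $C$: any other non-trivial $\gamma' \in C$ is a power of a common generator (or the two generate a common cyclic group), and powers of an elliptic rotation about $L_\gamma$ are again rotations about the same axis $L_\gamma$, so $L_{\gamma'} = L_\gamma =: L_C$. By construction $L_C$ is a $\Gamma_v$--axis. This gives the reverse map $C \mapsto L_C$.

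Finally I would check the two maps are mutually inverse. Starting from a $\Gamma_v$--axis $L$: every non-trivial element of $R_L$ has rotation axis $L$, so $L_{R_L} = L$. Starting from a cyclic $C$: each non-trivial $\gamma \in C$ fixes $L_C$ pointwise, so $C \subseteq R_{L_C}$; conversely any $\delta \in R_{L_C}$ is a rotation about $L_C$ and hence commutes with the rotations in $C$ and lies in the same one-parameter rotation subgroup of $L_C$, and by discreteness $R_{L_C}$ is the full finite cyclic rotation group of $\Gamma$ about $L_C$ — one must argue that $C$ is already this full group, which holds because $C$ was taken to be a cyclic subgroup and $R_{L_C}$ is the maximal such subgroup with that axis, but the bijection statement only claims a bijection with \emph{all} non-trivially acting cyclic subgroups, so in fact $R_{L_C}$ may be strictly larger than $C$. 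Here I expect the main subtlety: the statement as phrased must be interpreted so that the correspondence is between $\Gamma_v$--axes and \emph{maximal} non-trivially acting cyclic subgroups, or equivalently one notes that in the Bianchi groups the relevant rotation subgroups (orders $2$ and $3$, and the $\Z/2$'s inside the stabilisers isomorphic to $\mathcal{A}_4$, $\mathcal{S}_3$, $\mathcal{D}_2$) are all maximal cyclic. So the real work is a finite case-check, using the known list of finite subgroups of the Bianchi groups recalled later in the paper, that every non-trivially acting cyclic subgroup of a vertex stabiliser is maximal cyclic, so that $C = R_{L_C}$; granting that, $R_{L_C} = C$ and the maps are inverse bijections.
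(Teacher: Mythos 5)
Your construction is the same as the paper's: the forward map sends an axis $l$ to the subgroup $\dot{\Gamma}_l$ of elements of $\Gamma$ fixing $l$ pointwise, and the reverse map sends a cyclic subgroup to the rotation axis of a non-trivially acting element, both via corollary~\ref{corollaryTo1.4}. The two points where you are more careful are both warranted. First, cyclicity of $R_L$ does require finiteness (an arbitrary subgroup of $\mathrm{SO}(2)$ need not be cyclic); the paper's proof only says that $\dot{\Gamma}_l$ ``consists of rotations around $l$'' and implicitly leans on the finiteness of $\Gamma_v$. One small repair on your side: a central extension of a cyclic group by $\Z/2$ is \emph{not} cyclic in general ($\Kleinfourgroup$ is a counterexample); what saves the argument in $\mathrm{SL}_2(\C)$ is that $-1$ is the unique element of order two there, so any elliptic $\gamma$ of even order $n$ already satisfies $\gamma^{n/2}=-1$ and the preimage of a cyclic rotation group is again cyclic.

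Second, and more substantially, you are right that the two maps are mutually inverse only if every non-trivially acting cyclic subgroup of $\Gamma_v$ equals the \emph{full} rotation group about its axis; the paper's proof verifies that both maps are well defined and stops there. Your resolution is correct for $\mathrm{PSL}_2(\ringO_{-m})$, where lemma~\ref{finiteSubgroups} forces every non-trivial cyclic subgroup of a vertex stabiliser to have prime order, so two distinct ones cannot rotate about the same axis. But note that this section sets $\Gamma = \SLO$, and there the check genuinely fails: for $\gamma$ of order~$3$, the subgroups $\langle \gamma \rangle \cong \Z/3$ and $\langle -\gamma \rangle \cong \Z/6$ are both non-trivially acting, are distinct, and rotate about the same axis, so the stated map is not surjective onto \emph{all} non-trivially acting cyclic subgroups. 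The reading you propose --- a bijection between the $\Gamma_v$--axes and the \emph{maximal} non-trivially acting cyclic subgroups, i.e.\ the pointwise stabilisers of the axes --- is the one under which the lemma and its use in theorem~\ref{actionOnAxes} are correct in both $\SLO$ and $\PSLO$.
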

\begin{proof}
First we show that any \mbox{$\Gamma_v$--axis} is attributed to some non-trivial cyclic subgroup of $\Gamma_v$. 
Let $l$ be a \mbox{$\Gamma_v$--axis,} and let $\dot{\Gamma}_l$  be the set of elements of $\Gamma$ fixing $l$ pointwise. 
It is a subset of $\Gamma_v$, because $\dot{\Gamma}_l$ fixes $l$ pointwise and thus fixes $v$.
It is a subgroup because the composites and inverses must again fix $l$ pointwise.
By the definition of the \mbox{$\Gamma_v$--axes,} this subgroup is non-trivially acting. 
 And it is cyclic because by corollary \ref{corollaryTo1.4},  $\dot{\Gamma}_l$  consists only of rotations around $l$.
\\
Now we show that any non-trivially acting cyclic subgroup of $\Gamma_v$ is attributed to some \mbox{$\Gamma_v$--axis.}
Let $\gamma$ be the generator of a non-trivially acting cyclic subgroup of $\Gamma_v$.
By corollary \ref{corollaryTo1.4}, there is a geodesic line containing $v$, around which $\gamma$ performs a rotation.
\end{proof}

\begin{proof}[Proof of theorem {\rm \ref{actionOnAxes}}]
Let $l$ be a \mbox{$\Gamma_v$--axis,} and $\gamma \in \Gamma_v$. Let $\dot{\Gamma}_l$ be the subgroup of $\Gamma$ fixing $l$ pointwise. Then $\gamma \cdot l$ is again a \mbox{$\Gamma_v$--axis;} and the subgroup of $\Gamma$ fixing $\gamma \cdot l$ pointwise is $\gamma \dot{\Gamma}_l \gamma^{-1}$. Hence by lemma \ref{axes-cyclicSubgroupsBijection}, we can transfer the action to $\Gamma_v$-conjugation of the nontrivially acting cyclic subgroups.
\end{proof}
We deduce the following corollary from theorem {\rm \ref{actionOnAxes}}.
\begin{corollary} \label{normaliser in the stabiliser}
Let $\gamma \in \Gamma_v$ be a non-trivially acting element, stabilising a vertex $v \in \Hy$.
Let~$l$ be the $\Gamma_v$--axis pointwise stabilised by~$\langle \gamma \rangle$.
Then the subgroup of~$\Gamma_v$ sending~$l$ to itself, is the normaliser of~$\langle \gamma \rangle$ in~$\Gamma_v$.
\end{corollary}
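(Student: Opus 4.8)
The plan is to obtain the statement as a direct consequence of Theorem~\ref{actionOnAxes} and the bijection of Lemma~\ref{axes-cyclicSubgroupsBijection}; no further geometry is required, only an unwinding of what ``equivalence of actions'' means, so the proof should be short.

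First I would fix the dictionary. Write $\phi$ for the map of Lemma~\ref{axes-cyclicSubgroupsBijection} sending a $\Gamma_v$--axis $l'$ through $v$ to the subgroup $\dot{\Gamma}_{l'}$ of $\Gamma$ of rotations around $l'$; by that lemma, $\phi$ is a bijection onto the set of non-trivially acting cyclic subgroups of $\Gamma_v$. Its inverse, as constructed in the proof of the lemma, sends a cyclic subgroup $\langle\delta\rangle$ to the geodesic line through $v$ fixed pointwise by $\delta$, which is the only such line (by Proposition~\ref{EGM1.4} and Corollary~\ref{corollaryTo1.4}, the fixed-point set of a non-trivially acting elliptic element in $\Hy$ is a single geodesic). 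Since $\langle\gamma\rangle$ fixes $l$ pointwise, $l$ is exactly that line for $\delta=\gamma$, whence $\phi(l)=\langle\gamma\rangle$. (Alternatively, one may note that a non-trivially acting cyclic subgroup of a Bianchi group has prime order $2$ or $3$, hence equals the full pointwise stabiliser $\dot\Gamma_l$.)

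Next I would record the equivariance: for $g\in\Gamma_v$, the element $g$ fixes $v$, so $g\cdot l$ is again a geodesic through $v$, and it is fixed pointwise by $g\dot\Gamma_l g^{-1}$, which is non-trivially acting; hence $g\cdot l$ is a $\Gamma_v$--axis with $\phi(g\cdot l)=g\,\phi(l)\,g^{-1}$. This is precisely the equivalence of the two $\Gamma_v$--actions asserted in Theorem~\ref{actionOnAxes}. Chaining the equivalences, for any $g\in\Gamma_v$:
\begin{align*}
g\cdot l = l
&\iff \phi(g\cdot l)=\phi(l)
\iff g\,\phi(l)\,g^{-1}=\phi(l) \\
&\iff g\langle\gamma\rangle g^{-1}=\langle\gamma\rangle
\iff g\in N_{\Gamma_v}(\langle\gamma\rangle),
\end{align*}
where the first equivalence uses injectivity of $\phi$, the second the equivariance just noted, and the third the identification $\phi(l)=\langle\gamma\rangle$. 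Thus the subgroup of $\Gamma_v$ sending $l$ to itself is $N_{\Gamma_v}(\langle\gamma\rangle)$, as claimed. The only delicate point — and the nearest thing to an obstacle — is the equality $\phi(l)=\langle\gamma\rangle$ (rather than just the inclusion $\langle\gamma\rangle\subseteq\phi(l)$), which is where the bijectivity in Lemma~\ref{axes-cyclicSubgroupsBijection}, i.e. the maximality of non-trivially acting cyclic subgroups, is used; everything else is formal bookkeeping.
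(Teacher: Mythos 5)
Your proof is correct and follows the same route the paper intends: the corollary is stated there as a direct deduction from Theorem~\ref{actionOnAxes} via the bijection of Lemma~\ref{axes-cyclicSubgroupsBijection}, and your argument simply makes that deduction explicit, including the one point worth spelling out, namely the identification $\phi(l)=\langle\gamma\rangle$ (automatic for $\mathrm{PSL}_2$ since the pointwise stabiliser of an axis has prime order $2$ or $3$). No gaps.
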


Lemma \ref{axes-cyclicSubgroupsBijection}, theorem \ref{actionOnAxes} and corollary~\ref{normaliser in the stabiliser} clearly pass from~$\Gamma = \mathrm{SL_2}(\mathcal{O}_{-m})$ to~$\Gamma = \mathrm{PSL_2}(\mathcal{O}_{-m})$.
\\
We will make use of the following list of isomorphism types of finite subgroups in the Bianchi groups, which has been established in \cite{SchwermerVogtmann} and follows directly from the classification in \cite{binaereFormenMathAnn9}.

\begin{Lem}[Klein] \label{finiteSubgroups}
The finite subgroups in $\mathrm{PSL}_2(\ringO)$ are exclusively of isomorphism types the cyclic groups of orders one, two and three, the Klein four-group \mbox{$\Kleinfourgroup \cong \Z/2 \times \Z/2$}, the symmetric group $\Sthree$ and the alternating group~$\Afour$. 
\end{Lem}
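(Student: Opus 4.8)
The plan is to intersect the classical classification of the finite subgroups of $\mathrm{PSL}_2(\C)$ with a trace obstruction which pins down the possible orders of torsion elements inside a Bianchi group.

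First I would recall that every finite subgroup $G\subseteq\mathrm{PSL}_2(\C)$ fixes a point of $\Hy$: by corollary~\ref{corollaryTo1.4} each non-trivial element of $G$ is elliptic, and the circumcentre of any $G$-orbit---the unique point of the CAT(0) space $\Hy$ minimising the maximal distance to that orbit---is then a common fixed point $v$. The linear action of $G$ on the unit tangent sphere at $v$ embeds $G$ into $\mathrm{SO}(3)$, so Klein's classification of finite rotation groups shows that $G$ is cyclic, dihedral of order $2n$, or isomorphic to one of $\Afour$, $\mathcal{S}_4$, $\mathcal{A}_5$; alternatively one cites this list directly from \cite{binaereFormenMathAnn9} or \cite{ElstrodtGrunewaldMennicke}.

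Next I would show that a non-trivial torsion element of $\mathrm{PSL}_2(\ringO_{-m})$ has order $2$ or $3$. Lift such an element to $\gamma\in\mathrm{SL}_2(\ringO_{-m})$; since its image in $\mathrm{PSL}_2(\C)$ has finite order, some power $\gamma^{k}$ equals $\pm 1$, hence $\gamma$ is diagonalisable over $\C$ with eigenvalues $\lambda,\lambda^{-1}$ that are roots of unity. Then $\trace\gamma=\lambda+\overline{\lambda}=2\Real\lambda$ is real and lies in $[-2,2]$, but also $\trace\gamma\in\ringO_{-m}\cap\R=\Z$ (this intersection equals $\Z$ whether $m\equiv 1,2$ or $m\equiv 3\pmod 4$), so $\trace\gamma\in\{-2,-1,0,1,2\}$. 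The values $\pm 2$ force $\lambda=\pm 1$, hence $\gamma=\pm 1$; for a non-trivial class this leaves $\trace\gamma\in\{-1,0,1\}$, i.e.\ $\lambda$ is a primitive $3$rd, $4$th or $6$th root of unity, so the element has order $3$, $2$ or $3$ in $\mathrm{PSL}_2(\ringO_{-m})$.

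Finally I would run through the list of the first step, keeping only those groups all of whose non-trivial elements have order $2$ or $3$. Among cyclic groups this leaves only $\Z/1$, $\Z/2$, $\Z/3$. Among dihedral groups of order $2n$, the rotation subgroup forces $n\leq 3$, which gives $\mathcal{D}_1\cong\Z/2$, the Klein four-group $\Kleinfourgroup\cong\Z/2\times\Z/2$, and $\mathcal{D}_3\cong\Sthree$. The tetrahedral group $\Afour$ has element orders $1,2,3$ and survives, whereas $\mathcal{S}_4$ and $\mathcal{A}_5$ are excluded since they contain an element of order $4$, respectively $5$. This is exactly the asserted list. The one step deserving care is the trace computation---that a finite-order matrix in $\mathrm{SL}_2(\C)$ is semisimple with root-of-unity eigenvalues, and that $\ringO_{-m}\cap\R=\Z$---after which everything reduces either to the classical input of the first step or to the finite check of the last; producing explicit subgroups realising all six types is possible but unnecessary for the statement as phrased.
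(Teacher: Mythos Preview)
Your argument is correct. The paper does not actually prove this lemma; it simply records that the list ``has been established in \cite{SchwermerVogtmann} and follows directly from the classification in \cite{binaereFormenMathAnn9}.'' What you have written is exactly that derivation spelled out: Klein's list of finite subgroups of $\mathrm{PSL}_2(\C)\cong\mathrm{Isom}^+(\Hy)$ via the $\mathrm{SO}(3)$ picture, combined with the trace constraint $\trace\gamma\in\ringO_{-m}\cap\R=\Z$ and $|\trace\gamma|\leq 2$ to force element orders $2$ or $3$, then the bookkeeping that eliminates $\mathcal{S}_4$, $\mathcal{A}_5$ and the higher cyclic and dihedral groups. So you are not taking a different route from the paper so much as supplying the proof the paper chose to outsource.
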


The stabilisers of the points inside $\Hy$ are finite and hence of the above-listed types.

\subsection{A cell complex for the Bianchi groups} \label{refined cell complex}

The Bianchi/Humbert theory \cites{Bianchi, Humbert} gives a fundamental domain for the action of $\Gamma$ on $\Hy$, which we shall call the \emph{Bianchi fundamental polyhedron}.
It is a polyhedron in hyperbolic space up to the missing vertex~$\infty$, 
and up to a missing vertex for each non-trivial ideal class if $\ringO_{-m}$ is not a principal ideal domain.
We observe the following notion of strictness of the fundamental domain: the interior of the Bianchi fundamental polyhedron contains no two points which are identified by~$\Gamma$.
Swan~\cite{Swan} proves a theorem which implies that the boundary of the Bianchi fundamental polyhedron consists of finitely many cells.
Swan further produces a concept for an algorithm to compute the Bianchi fundamental polyhedron.
Such an algorithm has been implemented by Cremona~\cite{Cremona} for the five cases where~$\ringO_{-m}$ is Euclidean, and by his students Whitley~\cite{Whitley} for the non-Euclidean principal ideal domain cases, Bygott~\cite{Bygott} for a case of class number 2 and Lingham (\cite{Lingham}, used in~\cite{CremonaLingham}) for some cases of class number 3; and finally Aran\'es~\cite{Aranes} for arbitrary class numbers.
Another algorithm based on this concept has independently been detailed in~\cite{RahmThesis} and implemented in~\cite{BianchiGP} for all Bianchi groups, so we can make explicit use of the Bianchi fundamental polyhedron.
We can check that the computed polyhedron is indeed a fundamental domain for~$\Gamma$ using the following observation of Poincar\'e~\cite{Poincare}: After a cell subdivision which makes the cell stabilisers fix the cells pointwise, the 2-cells (``faces'') of the fundamental polyhedron appear in pairs $(\sigma, \gamma \cdot \sigma)$ with $\gamma \in \Gamma$ --- so for every orbit of faces, we have exactly two representatives --- such that with the orientation for which the lower side of the face $\sigma$ lies on the polyhedron, the upper side of~$\gamma \cdot \sigma$ lies on the polyhedron.

We induce a cell structure on~$\Hy$ by the images under~$\Gamma$ of the faces, edges and vertices of the Bianchi fundamental polyhedron.

\subsubsection{ Pointwise stabilised cells} 
\label{properEdgeStabilization}

In order to view clearly the local geometry, we pass to the \emph{refined cell complex}, which we obtain by subdiving this cell structure until the cell stabilisers fix the cells pointwise. 

We now give a method for checking effectively if the subdivision is fine enough for the latter property to hold.
First we compute which vertices of the Bianchi fundamental polyhedron lie on the same $\Gamma$-orbit. This can be deduced from the operation formula stated in the beginning of section \ref{geometricRigidity} and has been implemented in \cite{BianchiGP}.
Then we perform a check to make sure that no edge of the Bianchi fundamental polyhedron can be sent onto itself reversing its orientation.
The latter is only possible when the two endpoints of the edge are identified by some element of~$\Gamma$.
To avoid this circumstance, we subdivide barycentrically all the edges the two endpoints of which are identified by some element of~$\Gamma.$

In order to establish an analogous criterion for 2-cells, let us make use of the fact that real hyperbolic space is non-positively curved -- it has the CAT(0) property \cite{BridsonHaefliger}.

\begin{Lem}
Let $\sigma$ be a polygon, and $G$ be a group of isometries of an ambient {\rm CAT(0)} space.
Suppose that among the vertices of $\sigma$,  there are at least three which are the unique representatives of their respective $G$-orbit.
Then the stabiliser of $\sigma$ in $G$ must fix $\sigma$ pointwise. 
\end{Lem}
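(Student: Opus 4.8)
The plan is to use the fact that a group of isometries of a CAT(0) space fixing three points in ``general position'' must fix their entire convex hull pointwise, so it suffices to produce three fixed points among the vertices of $\sigma$. First I would invoke the standard CAT(0) fact: an isometry of a CAT(0) space that fixes two distinct points $p,q$ fixes the unique geodesic segment $[p,q]$ pointwise (by uniqueness of geodesics), and if it fixes three points not all on a common geodesic, then it fixes the convex hull of those three points pointwise (the convex hull of three such points is a geodesic triangle, filled in by geodesics from one vertex to the opposite side, each of which is fixed once its endpoints are). Since $\sigma$ is a polygon, it lies in the convex hull of its vertices; so if an element of $G$ stabilising $\sigma$ fixes three of the vertices of $\sigma$, and those three are not collinear, then it fixes $\sigma$ pointwise.

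The key step is therefore to show that any $g \in G$ stabilising $\sigma$ actually fixes the three distinguished vertices. Let $v_1, v_2, v_3$ be vertices of $\sigma$ each of which is the unique representative of its $G$-orbit. Since $g$ stabilises $\sigma$ (as a set), $g$ permutes the vertex set of $\sigma$; in particular $g \cdot v_i$ is again a vertex of $\sigma$, hence a vertex of the Bianchi fundamental polyhedron lying in the $G$-orbit of $v_i$. But $v_i$ is by hypothesis the \emph{unique} vertex in its $G$-orbit among the relevant vertices, so $g \cdot v_i = v_i$ for $i = 1, 2, 3$. Thus $g$ fixes $v_1, v_2, v_3$.

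Finally I would dispatch the collinearity issue: three vertices of a polygon $\sigma$ cannot all lie on a single geodesic line unless $\sigma$ is degenerate, since in a CAT(0) space geodesics are unique and a genuine (non-degenerate) polygon has its vertices in convex position; a convex polygon with three collinear vertices would have to be degenerate. Since we assume $\sigma$ is an honest polygon (a $2$-cell), its three distinguished vertices are not collinear, and the previous two steps apply to give that $g$ fixes $\sigma$ pointwise. As $g$ was an arbitrary element of the stabiliser of $\sigma$ in $G$, the stabiliser fixes $\sigma$ pointwise.

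The main obstacle I anticipate is purely one of bookkeeping rather than depth: one must be careful that ``vertex of $\sigma$'' and ``vertex of the Bianchi fundamental polyhedron'' are compatible notions so that the uniqueness-in-orbit hypothesis can be applied to $g \cdot v_i$, and one must state cleanly the CAT(0) lemma that fixing three non-collinear points forces fixing their convex hull (this is where the CAT(0)/uniqueness-of-geodesics hypothesis is essential, and it would fail in positively curved spaces). Everything else is formal.
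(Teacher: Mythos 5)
Your proposal is correct and follows essentially the same route as the paper: the stabiliser permutes the vertex set, uniqueness within the $G$-orbit pins down the three distinguished vertices, and the CAT(0) uniqueness of geodesics forces the whole triangle on these vertices to be fixed, whence $\sigma$ is fixed. The only wobble is your justification of the last step ("$\sigma$ lies in the convex hull of its vertices"), which does not by itself give the conclusion when $\sigma$ has more than three vertices; the paper instead observes that the fixed non-degenerate triangle is contained in $\sigma$ and therefore determines the isometric automorphism of $\sigma$, which is the cleaner way to finish (and, as you rightly flag, both arguments tacitly need the three distinguished vertices to be non-collinear).
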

\begin{proof}
Consider an element $g$ of the stabiliser of $\sigma$. The isometry $g$ must  preserve the set of the vertices of $\sigma$ up to a permutation. 
Furthermore, a vertex which is not $G$-equivalent to any other in this set, must be fixed by $g$. 
Under the hypothesis of our lemma, $g$ must hence fix three vertices of $\sigma$. 
As $g$ is a CAT(0) isometry, it must fix pointwise the whole triangle with these three vertices as corners. 
This triangle is contained in the polygon $\sigma$ and determines the isometric automorphisms of $\sigma$. Thus $g$ must fix $\sigma$ pointwise.
\end{proof}
Hence the check on our cell structure consists of making sure that each 2-cell has at least three vertices which are unique as representative of their respective $\Gamma$-orbit, among the vertices of the 2-cell.
Again, we can do this because we already have computed the $\Gamma$-equivalence classes of vertices.

The guarantee that all cells are fixed pointwise, allows us to obtain the stabilisers of the higher dimensional cells simply by taking the intersection of their vertex stabilisers.
Even more, in order to check the equivalence of two cells $\sigma$ and $\sigma'$, we only need to intersect the sets of elements of $\Gamma$ which identify the vertices of $\sigma$ with the ones of $\sigma'$.
The following lemma applies to any $\Gamma$-cell complex in hyperbolic space, and to all the cells in the refined cell complex.

\begin{Lem} \label{pointwiseStabilizers}
The stabilisers in $\mathrm{PSL}_2(\ringO)$ of pointwise-fixed 
\begin{itemize} 
\item edges in $\Hy$, are cyclic groups of orders one, two or three;
\item $2$--cells and $3$--cells in $\Hy$, are trivial.
\end{itemize}
\end{Lem}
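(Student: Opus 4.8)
The plan is to exploit Lemma~\ref{finiteSubgroups} (Klein's classification of finite subgroups) together with the already-established fact that in the refined cell complex every cell is fixed pointwise by its stabiliser. First I would treat the edge case. Let $e$ be an edge fixed pointwise by its stabiliser $\Gamma_e$ in $\mathrm{PSL}_2(\ringO)$. By Lemma~\ref{finiteSubgroups}, $\Gamma_e$ is one of the cyclic groups of order $1$, $2$, $3$, the Klein four-group $\Kleinfourgroup$, the symmetric group $\Sthree$, or the alternating group $\Afour$. The task is to rule out the three non-cyclic possibilities, and also to rule out that $\Gamma_e$ is cyclic of order greater than $3$ (which cannot happen by the classification anyway, so the content is only in excluding $\Kleinfourgroup$, $\Sthree$, $\Afour$). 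Suppose $\Gamma_e$ were non-cyclic; then it contains at least two distinct non-trivially acting cyclic subgroups $\langle\gamma_1\rangle$, $\langle\gamma_2\rangle$. Pick an interior point $v$ of $e$; since $\Gamma_e$ fixes $e$ pointwise, $v$ is a vertex-type point with $\Gamma_v \supseteq \Gamma_e$, so by Lemma~\ref{axes-cyclicSubgroupsBijection} each $\langle\gamma_i\rangle$ corresponds to a distinct $\Gamma_v$--axis $l_i$ through $v$. But $e$ itself is a geodesic segment through $v$ fixed pointwise by both $\gamma_1$ and $\gamma_2$, so by Corollary~\ref{corollaryTo1.4} the line through $e$ is the rotation axis of $\langle\gamma_1\rangle$ and also of $\langle\gamma_2\rangle$; that is, $l_1 = l_2$ (both equal the geodesic extending $e$), contradicting their distinctness under the bijection. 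Hence $\Gamma_e$ has a single non-trivially acting cyclic subgroup and, being a finite subgroup with that property, must itself be cyclic of order $1$, $2$, or $3$.

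For the $2$-cells and $3$-cells, let $\sigma$ be such a cell fixed pointwise by $\Gamma_\sigma$, and suppose $\gamma \in \Gamma_\sigma$ is non-trivially acting. Then $\gamma$ fixes pointwise a two-dimensional (if $\sigma$ is a $2$-cell, assuming $\sigma$ is not contained in a geodesic line — which it is not, being a genuine polygon) or higher-dimensional subset of $\Hy$. But by Corollary~\ref{corollaryTo1.4}, a non-trivially acting element of $\mathrm{SL}_2(\C)$ with a fixed point in $\Hy$ is elliptic and its fixed-point set in $\Hy$ is exactly a single geodesic line — a one-dimensional set. A $2$-cell or $3$-cell of the refined complex spans more than a geodesic line (it contains three non-collinear points), so it cannot be contained in the fixed-point set of any non-trivially acting $\gamma$. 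Therefore $\Gamma_\sigma$ consists only of trivially acting elements, i.e. is trivial in $\mathrm{PSL}_2(\ringO)$.

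I expect the main obstacle to be purely expository rather than mathematical: making precise the claim that the fixed-point set of an elliptic element is \emph{exactly} a geodesic line (so that no $2$- or $3$-cell can sit inside it), and checking that a refined $2$-cell genuinely contains three non-collinear points — this follows from the refinement procedure of Section~\ref{refined cell complex}, where every $2$-cell is arranged to have at least three vertices that are unique representatives of their $\Gamma$-orbits, hence three genuine vertices, which are non-collinear since the cell is a non-degenerate hyperbolic polygon. Once that geometric input is in hand, the argument is a short combination of Corollary~\ref{corollaryTo1.4}, Lemma~\ref{axes-cyclicSubgroupsBijection} and Lemma~\ref{finiteSubgroups}.
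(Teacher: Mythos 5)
Your proof is correct and follows essentially the same route as the paper: both arguments rest on the fact that an elliptic element fixes pointwise exactly one geodesic line, so that every non-trivial element of an edge stabiliser is a rotation about the single axis containing the edge, and no $2$- or $3$-cell can lie inside such a one-dimensional fixed-point set. The only difference is cosmetic: where the paper excludes $\Sthree$ and $\Afour$ by noting that rotations about a common axis commute and dismisses $\Kleinfourgroup$ as ``easy to see'', you exclude all three non-cyclic types uniformly via the axis--subgroup bijection of Lemma~\ref{axes-cyclicSubgroupsBijection}, which makes that last exclusion fully explicit.
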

\begin{proof}
As $\mathrm{SL}_2(\C)$ acts as orientation-preserving isometries on hyperbolic three-space, the stabiliser of a pointwise-fixed edge can only perform a rotation, with this edge lying on the rotation axis. This is possible because the edges in $\Hy$ are geodesic segments.  The group of rotations around one given axis must be abelian; and it is easy to see that it cannot be of Klein four-group type.
Thus among the subgroups of $\mathrm{PSL}_2(\ringO)$ which fix points in $\Hy$ --- their types are listed in lemma~\ref{finiteSubgroups} --- the only non-trivial types of groups which can fix edges pointwise are $\Z/2$ and $\Z/3$.
\\
In a pointwise-fixed 2-cell or 3-cell, we can choose two non-aligned pointwise-fixed edges, a rotation around one of which only fixes the other edge pointwise if it is the trivial rotation.
\end{proof}

A completely different cell complex for extensions of the Bianchi groups has been obtained by Yasaki~\cite{Yasaki}, who has implemented an algorithm of Gunnells~\cite{Gunnells} to compute the perfect forms modulo the action of $\mathrm{GL_2}(\mathcal{O}_{-m})$, giving the facets of the Vorono\"{\i} \thinspace polyhedron arising from a construction of Ash~\cite{Ash}.

\section{Rigidity of the action on the refined cell complex} \label{Rigidity of the action on the refined cell complex}

\begin{Lem} \label{twoEdges}
Each $\Gamma_v$--axis contains two edges of the refined cell complex that are adjacent to $v$.
\end{Lem}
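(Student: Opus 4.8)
The plan is to use the fact that the refined cell complex is a $\Gamma$-invariant cell structure on $\Hy$ with pointwise cell stabilisers, together with the rotational description of $\Gamma_v$--axes coming from Corollary \ref{corollaryTo1.4} and Lemma \ref{axes-cyclicSubgroupsBijection}. Let $l$ be a $\Gamma_v$--axis, so by Lemma \ref{axes-cyclicSubgroupsBijection} there is a non-trivially acting cyclic subgroup $\dot{\Gamma}_l \leq \Gamma_v$ of rotations whose fixed-point set in $\Hy$ is exactly the geodesic $l$. Fix a generator $\gamma$ of $\dot{\Gamma}_l$. First I would observe that $l$ passes through the vertex $v$ and continues in both directions into the interior of $\Hy$, so in a neighbourhood of $v$ the geodesic $l$ meets the open cells of the refined complex incident to $v$; since $l$ is one-dimensional and is not contained in any cell of dimension $0$, near $v$ it must pass through the interiors of cells of positive dimension.

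The key step is to show that near $v$, the geodesic $l$ actually lies \emph{inside edges} of the complex rather than cutting transversally through the interiors of $2$-cells or $3$-cells. For this I would argue by the pointwise-stabiliser property: suppose a small arc of $l$ emanating from $v$ lay in the interior of a cell $\tau$ of dimension $2$ or $3$. Every point of that arc is fixed by $\gamma$; but $\gamma$ stabilises the cell $\tau$ (it fixes a point in the open cell $\tau$, hence permutes the cells of the complex fixing $\tau$), and by hypothesis the action on the refined complex is such that cell stabilisers fix cells pointwise, so $\gamma$ would fix $\tau$ pointwise. By Lemma \ref{pointwiseStabilizers} the stabiliser of a pointwise-fixed $2$-cell or $3$-cell is trivial, contradicting that $\gamma$ acts non-trivially. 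Hence the arc of $l$ from $v$ in that direction must lie in an edge $e$; and since $\gamma$ fixes $e$ pointwise and $e \subseteq l$ while $l$ is a geodesic, $e$ is a full edge of the refined complex lying along $l$ and adjacent to $v$. Running the same argument for the opposite direction of $l$ from $v$ produces a second edge $e'$ adjacent to $v$ and contained in $l$, and $e \neq e'$ because they leave $v$ in opposite directions along the geodesic.

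I expect the main obstacle to be the first, seemingly soft point: ruling out that $l$, just past $v$, immediately enters the interior of a higher-dimensional cell, i.e. making precise that ``$l$ lies in the one-skeleton near $v$.'' The clean way to handle this is precisely the stabiliser dichotomy above — any positive-length sub-arc of $l$ is pointwise fixed by $\gamma$, so the open cell containing a point of that arc is $\gamma$-stabilised, hence (by the defining property of the refined complex) pointwise $\gamma$-fixed, hence an edge by Lemma \ref{pointwiseStabilizers}. One should also note the trivial edge cases: $v$ is a vertex of the complex so it is a $0$-cell, and the two edges obtained are genuinely $1$-cells of the refined complex (not merely geodesic segments), because the subdivision producing the refined complex was designed so that edges are geodesic segments fixed pointwise by their stabilisers. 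With these remarks the lemma follows.
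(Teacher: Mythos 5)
Your proposal is correct and follows essentially the same route as the paper: the key step in both is that a sub-arc of $l$ lying in the interior of a $2$- or $3$-cell would force the non-trivial rotation $\gamma$ to stabilise that cell (its interior meets no other open cell), hence to fix it pointwise, contradicting the triviality of such stabilisers from Lemma~\ref{pointwiseStabilizers}. Your write-up merely phrases the paper's proof by contradiction in contrapositive form and adds the (correct) remark that the two opposite directions of $l$ at $v$ yield two distinct edges.
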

\begin{proof}
Let $l$ be a $\Gamma_v$--axis for which this is not the case. 
Then $l$ passes through the interior of a $2$- or $3$-cell $\sigma$ adjacent to $v$.
Let $\gamma$ be a non-trivial element of $\Gamma$ fixing $l$.
As the $\Gamma$-action preserves our cell structure, $\gamma$ must send $\sigma$ to another cell of its dimension.
Since $\gamma$ fixes the points in the non-empty intersection of $l$ with the interior of $\sigma$, and since the interior of $\sigma$ intersects trivially with the interior of any other cell, $\gamma$ must fix $\sigma$.
Hence $\gamma$ is in the stabiliser of $\sigma$, 
and trivial by lemma \ref{pointwiseStabilizers}.
This contradicts the assumption that $\gamma$ has been chosen non-trivially. 
\end{proof}

\begin{Lem} \label{onAxis}
Let $e$ be an edge fixed pointwise by a non-trivially acting element $\gamma \in \Gamma$. Let $v$ be a vertex adjacent to $e$. Then $e$ lies on a \mbox{$\Gamma_v$--axis.}
\end{Lem}
\begin{proof} 
By corollary \ref{corollaryTo1.4}, $\gamma$ must perform a rotation around an axis passing through~$v$ and all the points of $e$. This is the $\Gamma_v$--axis containing $e$.
\end{proof}

\begin{lemma} \label{sublemma}
Let $\gamma \in \Gamma_v$ be a non-trivially acting element, stabilising a vertex $v \in \Hy$. Then the following two assertions are equivalent.
\begin{enumerate}
 \item[(i)] Modulo the action of $\Gamma_v$, there is just one edge adjacent to $v$ on the $\Gamma_v$--axis stabilised by~$\langle \gamma \rangle$.
 \item[(ii)] The normaliser of $\langle \gamma \rangle$ in $\Gamma_v$ contains an element carrying out an isometry of order $2$ and which is not contained in~$\langle \gamma \rangle$.
\end{enumerate}
\end{lemma}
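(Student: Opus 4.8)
The plan is to use Lemma~\ref{twoEdges} to pin down the local picture on the $\Gamma_v$--axis $l$ stabilised by $\langle \gamma \rangle$, and then translate statements about edges adjacent to $v$ on $l$ into statements about the normaliser $N := N_{\Gamma_v}(\langle \gamma \rangle)$ via Corollary~\ref{normaliser in the stabiliser}. By Lemma~\ref{twoEdges}, $l$ contains exactly two edges $e_1, e_2$ of the refined cell complex adjacent to $v$, one on each side of $v$ along $l$. An element $g \in \Gamma_v$ sends $l$ to itself if and only if it either fixes each of $e_1, e_2$ or swaps them; by Corollary~\ref{normaliser in the stabiliser}, the set of such $g$ is precisely $N$. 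The subgroup $\langle \gamma \rangle$ fixes both $e_1$ and $e_2$ (it fixes $l$ pointwise). So the two conditions in the lemma are about whether $N$ acts transitively on $\{e_1, e_2\}$, i.e.\ whether $N$ is strictly larger than the pointwise stabiliser of $l$.

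First I would prove (i) $\Rightarrow$ (ii). Assuming $e_1$ and $e_2$ lie in the same $\Gamma_v$--orbit, pick $g \in \Gamma_v$ with $g \cdot e_1 = e_2$. Since $g$ must fix $v$ and send the endpoint of $e_1$ other than $v$ to the corresponding endpoint of $e_2$, it sends $l$ to itself, hence $g \in N$. Now $g \notin \langle \gamma \rangle$ because every element of $\langle \gamma \rangle$ is a rotation about $l$ and therefore fixes $e_1$, whereas $g$ moves $e_1$. It remains to arrange that $g$ can be chosen of order $2$. The element $g$ interchanges $e_1$ and $e_2$, so $g^2$ fixes $e_1$ pointwise (being a CAT(0) isometry fixing $v$ and both endpoints of $e_1$, it fixes $e_1$; more precisely $g^2 \in \dot\Gamma_{e_1}$), and $g^2$ fixes $l$ pointwise, so $g^2 \in \langle \gamma \rangle$. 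Working in $\mathrm{PSL}_2(\ringO)$, where $\langle \gamma \rangle$ is cyclic of order $2$ or $3$ by Lemma~\ref{pointwiseStabilizers}, I would examine the finite group $\langle g, \gamma\rangle$: it is one of the types in Lemma~\ref{finiteSubgroups}, it contains the normal cyclic subgroup $\langle \gamma \rangle$ with $g$ acting on it, and $g^2 \in \langle \gamma \rangle$. In the order-$2$ case, replacing $g$ by $g$ or $g\gamma$ gives an involution swapping $e_1, e_2$; in the order-$3$ case one checks (using $\Sthree$ or $\Afour$ as the only possibilities for $\langle g,\gamma\rangle$ with $\langle\gamma\rangle \cong \Z/3$) that $g$ may be taken to be an involution inverting $\gamma$. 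This is the step I expect to be the main obstacle: one must rule out that the only orbit-merging elements have order divisible by the order of $\gamma$ in a way that forbids an involution, and this requires a small case analysis on the finite subgroup $\langle g, \gamma \rangle$ together with Theorem~\ref{actionOnAxes} (the normaliser acts on $l$ the same way it conjugates $\langle\gamma\rangle$, so an involution in $N\setminus \langle\gamma\rangle$ inverts $\gamma$ and hence swaps the two half-axes).

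Conversely, for (ii) $\Rightarrow$ (i), let $h \in N$ be an isometry of order $2$ not in $\langle \gamma \rangle$. By Theorem~\ref{actionOnAxes} and Corollary~\ref{normaliser in the stabiliser}, $h$ sends $l$ to itself; since $h \notin \dot\Gamma_l$ (elements of $\dot\Gamma_l = \langle\gamma\rangle$ fix $l$ pointwise, while $h$ of order $2$ inverting $\langle\gamma\rangle$ does not), $h$ must reverse the orientation of $l$ and therefore interchange the two half-rays of $l$ at $v$. As $h$ is a cell-complex automorphism fixing $v$, it carries the edge $e_1$ adjacent to $v$ on one side of $l$ to the edge adjacent to $v$ on the other side, which is $e_2$. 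Hence $e_1$ and $e_2$ lie in the same $\Gamma_v$--orbit, and by Lemma~\ref{twoEdges} these are all the edges adjacent to $v$ on $l$, giving exactly one edge up to the action of $\Gamma_v$. Throughout I would invoke the remark after Corollary~\ref{normaliser in the stabiliser} that Lemma~\ref{axes-cyclicSubgroupsBijection}, Theorem~\ref{actionOnAxes} and Corollary~\ref{normaliser in the stabiliser} pass to $\mathrm{PSL}_2(\ringO_{-m})$, so that the finiteness and classification results of Lemma~\ref{finiteSubgroups} and Lemma~\ref{pointwiseStabilizers} are available for the case analysis.
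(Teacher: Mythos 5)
Your proposal is correct and follows essentially the same route as the paper: exactly two edges on $l$ adjacent to $v$ by Lemma~\ref{twoEdges}, translation between ``preserves $l$'' and ``normalises $\langle\gamma\rangle$'' via Corollary~\ref{normaliser in the stabiliser}, and the dichotomy between fixing $l$ pointwise and swapping the two edges. The only place you over-engineer is the order-$2$ claim in (i)$\Rightarrow$(ii): since $g$ acts as a transposition on the two-element set of edges, its order is even, and as every element of $\Gamma_v$ carries out an isometry of order $1$, $2$ or $3$, that order is exactly $2$ --- no case analysis on $\langle g,\gamma\rangle$ is needed.
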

\begin{proof}
Denote by $l$ the $\Gamma_v$--axis pointwise stabilised by $\langle \gamma \rangle$. By lemma~\ref{twoEdges}, adjacent to $v$ there are two edges of the refined cell complex on $l$.
\\
If they are identified by an element $\beta \in \Gamma_v$, the isometry $\beta$ must send $l$ to itself. Then \begin{itemize}
\item corollary~\ref{normaliser in the stabiliser} tells us that $\beta$ is in the normaliser of $\langle \gamma \rangle$ in $\Gamma_v$.
\item Denoting the two identified edges by $(a,v)$ and $(b,v)$, it follows from $\beta(a,v) = (b,v)$ that $(a,v) = \beta(b,v)$ and vice versa. Hence the isometry carried out by $\beta$ cannot be of order~$1$ or~$3$. The elements of~$\Gamma_v$ all carry out isometries of orders~$1$, $2$ or~$3$, 
\end{itemize}
so (i) implies (ii).
\\
Conversely, assume that $\beta$ is an element of the normaliser of $\langle \gamma \rangle$ in $\Gamma_v$, carrying out an isometry of order $2$ and not contained in~$\langle \gamma \rangle$.
Then by corollary~\ref{normaliser in the stabiliser}, it sends $l$ to itself.
If this happens by the identity on $l$, then $l$ is the rotation axis of $\beta$ and by lemma~\ref{axes-cyclicSubgroupsBijection}, $\beta$ is in~$\langle \gamma \rangle$, contrary to our assumption. Hence $\beta$ sends the two edges on $l$ to one another, and assertion (i) follows. 
\end{proof}

The above study 
provides all the tools to prove the following lemma, which is useful in order to obtain theorem~\ref{HomologicalInvarianceUnderHomeomorphisms}.
We use the notations of lemma~\ref{finiteSubgroups} for the occurring types of finite groups.

\vbox{
\begin{lemma} \label{geometricRigiditytheorem}
Let $v$ be a non-singular vertex in the refined cell complex. Then the number~$\bf n$ of orbits of edges in the refined cell complex adjacent to $v$, with stabiliser in ${\rm PSL}_2(\ringO_{-m})$ isomorphic to~$\Z/ \ell$,  is given as follows for $\ell = 2$ and $\ell = 3$.
$$ \begin{array}{c|cccccc}
{\rm Isomorphism}\medspace { \rm type}\medspace {\rm of } \medspace {\rm the  }\medspace {\rm vertex }\medspace {\rm stabiliser} & \{1\} & \Z/2 & \Z/3 & \Kleinfourgroup & \Sthree & \Afour \\ 
\hline &&&&&& \scriptsize \\
{\bf n} \medspace \mathrm{ for } \medspace \ell = 2 & 0 & 2 & 0 & 3 & 2 & 1 \\ 
{\bf n} \medspace \mathrm{ for } \medspace \ell = 3 & 0 & 0 & 2 & 0 & 1 & 2.
\end{array} $$ \normalsize
\end{lemma}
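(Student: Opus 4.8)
The plan is to argue case-by-case over the six isomorphism types of non-singular vertex stabilisers listed in Lemma~\ref{finiteSubgroups}, using Theorem~\ref{actionOnAxes} to replace the geometric count of edge orbits by an algebraic count inside the finite group. First I would observe that by Lemma~\ref{pointwiseStabilizers}, an edge adjacent to $v$ has $\Z/\ell$ in its stabiliser (for $\ell = 2$ or $3$) precisely when it lies on a $\Gamma_v$--axis whose pointwise stabiliser $\dot\Gamma_l$ contains an element of order $\ell$; and by Lemma~\ref{axes-cyclicSubgroupsBijection} together with Lemma~\ref{twoEdges}, each such axis carries exactly two edges adjacent to $v$. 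So the task reduces to counting, modulo the $\Gamma_v$--action, the axes through $v$ whose rotation subgroup has order divisible by $\ell$, and then deciding for each such axis orbit whether its two edges form one or two orbits under $\Gamma_v$. Theorem~\ref{actionOnAxes} identifies the set of axes through $v$ (with the $\Gamma_v$--action) with the set of non-trivial cyclic subgroups of $\Gamma_v$ (with the conjugation action), so the count of relevant axis-orbits becomes the count of conjugacy classes of cyclic subgroups of order divisible by $\ell$.

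Next I would do the bookkeeping group by group. For the trivial group there are no non-trivial cyclic subgroups, giving $\mathbf{n} = 0$ in both rows. For $\Z/\ell$ itself there is a single such subgroup, namely the whole group, which is normal, and it equals $\langle\gamma\rangle$, so by Lemma~\ref{sublemma} the two edges on the axis are not identified — wait, Lemma~\ref{sublemma}(ii) fails here since the normaliser is the whole group $\Z/\ell$ which contains no order-$2$ element outside $\langle\gamma\rangle$ (for $\ell = 3$) and for $\ell = 2$ the normaliser is $\Z/2 = \langle\gamma\rangle$ itself — so in all cases (i) fails, and the axis contributes two edge orbits, giving $\mathbf{n} = 2$. (For the off-diagonal entries, e.g. $\Z/2$ in the $\ell = 3$ row, there are no order-$3$ cyclic subgroups, hence $\mathbf{n} = 0$, and symmetrically for $\Z/3$ in the $\ell = 2$ row.) For the Klein four-group $\Kleinfourgroup$ and $\ell = 2$: it has three subgroups of order $2$, each normal, each its own normaliser, so each contributes two edge orbits, yielding $\mathbf{n} = 3$; it has no order-$3$ subgroup, so $\mathbf{n} = 0$ in the $\ell = 3$ row. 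For $\Sthree$: the unique subgroup of order $3$ is normal with normaliser all of $\Sthree$, which does contain an order-$2$ element outside it, so by Lemma~\ref{sublemma} the two edges are identified and that axis contributes one edge orbit ($\mathbf{n} = 1$ for $\ell = 3$); the three subgroups of order $2$ are all conjugate, giving a single axis-orbit, and each such subgroup is its own normaliser (no order-$2$ element outside it), so it contributes two edge orbits ($\mathbf{n} = 2$ for $\ell = 2$). For $\Afour$: the four subgroups of order $3$ are all conjugate, one axis-orbit, each self-normalising in $\Afour$ (the normaliser of a Sylow $3$-subgroup has order $3$), so two edge orbits ($\mathbf{n} = 2$ for $\ell = 3$); the three subgroups of order $2$ are all conjugate, one axis-orbit, and the normaliser of such a subgroup is the Klein four-group $\Kleinfourgroup \subset \Afour$, which contains order-$2$ elements outside $\langle\gamma\rangle$, so by Lemma~\ref{sublemma} the two edges are identified, one edge orbit ($\mathbf{n} = 1$ for $\ell = 2$). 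Assembling these gives exactly the table.

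The main obstacle is not any single computation but making sure the two logical steps interlock cleanly: that ``axes through $v$ modulo $\Gamma_v$'' really does match ``conjugacy classes of non-trivial cyclic subgroups'' via Theorem~\ref{actionOnAxes} at the level of the subset with order divisible by $\ell$ (which is immediate, since conjugation preserves the order of a cyclic subgroup), and that within one axis-orbit the number of edge orbits is governed precisely by the normaliser condition in Lemma~\ref{sublemma}. A subtle point I would be careful about: Lemma~\ref{sublemma} is stated for a fixed $\gamma$ and its axis, so to pass to an orbit of subgroups I should note that conjugate subgroups have conjugate (hence isomorphic, and equivalent-as-pairs) normaliser configurations, so the ``one edge orbit vs. two'' dichotomy is constant along an axis-orbit. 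The only real case-analysis input beyond this is the elementary subgroup/normaliser structure of $\Kleinfourgroup$, $\Sthree$ and $\Afour$, which is standard. Once those are in hand the table entries follow by the rule: each axis-orbit contributes $1$ to $\mathbf{n}$ if Lemma~\ref{sublemma}(ii) holds for a generator of the corresponding cyclic subgroup, and $2$ otherwise.
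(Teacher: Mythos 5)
Your overall strategy is exactly the paper's: use Lemma~\ref{onAxis} and Lemma~\ref{axes-cyclicSubgroupsBijection} together with Theorem~\ref{actionOnAxes} to convert the count of edge orbits into a count of conjugacy classes of cyclic subgroups of order~$\ell$ in $\Gamma_v$, and then use Lemma~\ref{sublemma} to decide, for each axis orbit, whether the two edges supplied by Lemma~\ref{twoEdges} are identified (contributing $1$ to $\mathbf{n}$) or not (contributing $2$). Your handling of the trivial, cyclic, $\Sthree$ and $\Afour$ cases matches the paper's proof.

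However, your Klein four-group case is internally inconsistent and, read literally, yields the wrong number. You write that the three order-$2$ subgroups are ``each normal, each its own normaliser, so each contributes two edge orbits, yielding $\mathbf{n}=3$''. First, a normal subgroup of $\Kleinfourgroup$ has normaliser equal to all of $\Kleinfourgroup$, not to itself, so ``each normal'' and ``each its own normaliser'' cannot both hold. Second, three axis orbits each contributing two edge orbits would give $\mathbf{n}=6$, not $3$. The correct application of Lemma~\ref{sublemma} runs in the opposite direction from what you wrote: since $\Kleinfourgroup$ is abelian, the normaliser of $\langle\gamma\rangle$ is the whole group, which \emph{does} contain an order-$2$ element not in $\langle\gamma\rangle$ (either of the other two involutions), so condition~(ii) holds, the two edges on each axis are identified, and each of the three axis orbits contributes exactly one edge orbit, giving $\mathbf{n}=3$. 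This is how the paper argues (``The whole group $\Kleinfourgroup$ normalises any of its elements''), and it also delivers the additional fact, recorded in the corollary following the lemma, that the three resulting edge stabilisers are precisely the three order-$2$ subgroups of $\Gamma_v$. With that single case repaired, your proof coincides with the paper's.
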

}

 \begin{proof}
Due to lemma \ref{onAxis}, any edge with the requested properties must lie on some 
\mbox{$\Gamma_v$--axis.} 
Thus the cases where $n = 0$ follow directly from 
lemma \ref{axes-cyclicSubgroupsBijection}.
It remains to distinguish the following cases.
\begin{itemize}
\item 
Let $\ell = 2$ and $\Gamma_v \cong \Afour$.
\\
There is only one conjugacy class of order-2-elements in $\Afour$, 
so by theorem~\ref{actionOnAxes}, there is just one $\Gamma_v$-orbit of \mbox{$\Gamma_v$--axes} such that an element of order 2 fixes this axis pointwise.
The normaliser of any element of order $2$ in $\Afour$ is the Sylow 2-subgroup $\Kleinfourgroup$.
So by lemma~\ref{sublemma}, just one $\Z/2$--stabilised edge adjacent to $v$ is in the quotient by $\Gamma_v$.
\item 
Let $\ell = 2$ and $\Gamma_v \cong \Kleinfourgroup$.
\\
There are exactly three conjugacy classes of order-2-elements in $\Kleinfourgroup$, 
so by theorem~\ref{actionOnAxes}, there are three $\Gamma_v$-orbits of \mbox{$\Gamma_v$--axes.} 
The whole group $\Kleinfourgroup$ normalises any of its elements.
So by lemma~\ref{sublemma}, there are exactly three representatives of non-trivially stabilised edges adjacent to~$v$, one on each \mbox{$\Gamma_v$--axis.} Their stabilisers are the order-2-subgroups of~$\Gamma_v$, as we see from lemma~\ref{axes-cyclicSubgroupsBijection}.
\item 
Let $\ell = 2$ and $\Gamma_v \cong \Sthree$.
 \\
There is only one conjugacy class of order-2-elements in $\Sthree$, 
so by theorem~\ref{actionOnAxes}, there is just one $\Gamma_v$-orbit of \mbox{$\Gamma_v$--axes} such that an element of order 2 fixes this axis pointwise.
The normaliser of an order-2-subgroup $\langle \gamma \rangle$ in $\Sthree$ is $\langle \gamma \rangle$ itself.
So by lemma~\ref{sublemma}, the two edges on this axis lie on different orbits.
\item Let $\Gamma_v$ be a non-trivial cyclic group. \\
As we see from corollary~\ref{corollaryTo1.4}, any vertex with stabiliser a cyclic group lies on a single rotation axis, around which its stabiliser performs rotations.
From lemma~\ref{sublemma}, we see that the two edges on this axis lie on different orbits.
\item
Let $\ell = 3$ and $\Gamma_v \cong \Sthree$. 
\\
There is only one cyclic subgroup of order $3$ in $\Sthree$.
Its normaliser in $\Sthree$ is the full group~$\Sthree$.
From lemma~\ref{sublemma}, we see that there is just one relevant edge in the quotient by~$\Gamma_v$.
\item 
Let $\ell = 3$ and $\Gamma_v \cong \Afour$. 
\\
There are four cyclic subgroups of order $3$ in $\Afour$.
They are all conjugate, so just one \mbox{$\Gamma_v$--axis} is in the quotient by $\Gamma_v$.
The normaliser of $\Z/3$ in $\Afour$ is only $\Z/3$ itself, so by lemma~\ref{sublemma} we obtain two edges on different orbits.
\end{itemize}
\end{proof}

The proof of the following corollary is included in the above proof.
\begin{corollary} ${}$
\begin{itemize} 
\item In the case $\Gamma_v \cong \Kleinfourgroup$ of lemma~\emph{\ref{geometricRigiditytheorem}}, the three stabilisers of edge representatives adjacent to $v$ which are not trivial, are precisely the three order-$2$-subgroups of~$\Gamma_v$. 
\item Consider the cases of lemma~\emph{\ref{geometricRigiditytheorem}} where $\Gamma_v$ is a non-trivial cyclic group. Then the two edges adjacent to $v$, which have a non-trivial stabiliser, have the same stabiliser as~$v$.
\end{itemize}
\end{corollary}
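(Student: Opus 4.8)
The plan is to read off both assertions from the case analysis already carried out in the proof of lemma~\ref{geometricRigiditytheorem}, supplementing it with lemma~\ref{axes-cyclicSubgroupsBijection} and lemma~\ref{pointwiseStabilizers} to pin down the isomorphism types of the edge stabilisers precisely.

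For the first assertion, start from the case $\ell = 2$, $\Gamma_v \cong \Kleinfourgroup$ of lemma~\ref{geometricRigiditytheorem}. There we saw that $\Kleinfourgroup$ has exactly three conjugacy classes of order-$2$ elements, hence by theorem~\ref{actionOnAxes} exactly three $\Gamma_v$-orbits of $\Gamma_v$--axes, and --- since $\Kleinfourgroup$ normalises each of its cyclic subgroups --- exactly one edge representative adjacent to $v$ on each such axis, by lemma~\ref{sublemma}. By lemma~\ref{onAxis} every non-trivially stabilised edge adjacent to $v$ lies on a $\Gamma_v$--axis, so these three edges are all the non-trivially stabilised edge representatives adjacent to $v$ (consistent with ${\bf n} = 3$ for $\ell = 2$ and ${\bf n} = 0$ for $\ell = 3$ in the table). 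It then remains to identify the stabiliser of such an edge $e$ lying on a $\Gamma_v$--axis $l$. By lemma~\ref{axes-cyclicSubgroupsBijection} the subgroup of $\Gamma$ of rotations around $l$ is one of the order-$2$ subgroups of $\Gamma_v$, and it fixes $e$ pointwise; conversely the stabiliser of $e$ is contained in $\Gamma_v$ (it fixes the endpoint $v$) and, by lemma~\ref{pointwiseStabilizers}, is cyclic of order $1$, $2$ or $3$ --- it is non-trivial by construction and cannot have order $3$, as $\Kleinfourgroup$ has no element of order $3$. Hence it equals the order-$2$ subgroup of rotations around $l$. Since the three axes through $v$ are distinct and lemma~\ref{axes-cyclicSubgroupsBijection} is a bijection, these three subgroups are distinct, and as $\Kleinfourgroup$ has exactly three order-$2$ subgroups, they exhaust them.

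For the second assertion, let $\Gamma_v$ be a non-trivial cyclic group (the cases $\Gamma_v \cong \Z/2$ and $\Gamma_v \cong \Z/3$ of lemma~\ref{geometricRigiditytheorem}, where ${\bf n} = 2$). By corollary~\ref{corollaryTo1.4}, $v$ lies on a single rotation axis $l$, around which all of $\Gamma_v$ acts by rotations, so $\Gamma_v$ is itself the group of rotations around $l$ and $l$ is the unique $\Gamma_v$--axis through $v$ under lemma~\ref{axes-cyclicSubgroupsBijection}. Lemma~\ref{twoEdges} gives two edges of the refined cell complex adjacent to $v$ on $l$; each of them is fixed pointwise by $\Gamma_v$ (since $\Gamma_v$ fixes $l$ pointwise), and each has stabiliser contained in $\Gamma_v$ (an element fixing the edge fixes $v$), hence equal to $\Gamma_v$. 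By lemma~\ref{onAxis} these are the only edges adjacent to $v$ with non-trivial stabiliser, proving the claim. There is no genuine obstacle here beyond carefully matching the geometric objects (axes, edges) with the algebraic ones (cyclic subgroups) through lemma~\ref{axes-cyclicSubgroupsBijection}, and excluding spurious larger edge stabilisers through lemma~\ref{pointwiseStabilizers}.
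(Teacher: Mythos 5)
Your proof is correct and takes essentially the same route as the paper, which simply observes that the corollary is already contained in the case analysis of the proof of lemma~\ref{geometricRigiditytheorem}; you spell out that same chain (theorem~\ref{actionOnAxes}, lemma~\ref{sublemma}, lemma~\ref{onAxis}, lemma~\ref{axes-cyclicSubgroupsBijection}), adding the explicit use of lemma~\ref{pointwiseStabilizers} to pin down the edge stabiliser orders. No issues.
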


By observing the pre-images of the projection from $\mathrm{SL_2}(\ringO_{-m})$ to $\mathrm{PSL_2}(\ringO_{-m})$, we further obtain the following.

\begin{corollary} 
Let $v$ be any vertex in the refined cell complex. Then the number $n$ of orbits of edges in the refined cell complex adjacent to $v$, with stabiliser in  $\Gamma :=\mathrm{SL_2}(\ringO_{-m})$ isomorphic to $\Z/ 2\ell$,  is given by the table of lemma~{\rm \ref{geometricRigiditytheorem}}, if we replace the stabiliser isomorphism types $\{1\}$, $\Z/2$, $\Z/3$, $\Kleinfourgroup$, $\Sthree$ and $\Afour$ by their pre-images, which are respectively:  $\Z/2$, $\Z/4$, $\Z/6$, the $8$-elements quaternion group, the $12$-elements binary dihedral group and the binary tetrahedral group.
\end{corollary}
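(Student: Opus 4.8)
The plan is to reduce everything to Lemma~\ref{geometricRigiditytheorem} via the central isogeny $\pi\colon \mathrm{SL}_2(\ringO_{-m}) \to \mathrm{PSL}_2(\ringO_{-m})$, whose kernel $\{\pm 1\}$ acts trivially on $\Hy$ by the discussion of Section~\ref{geometricRigidity}. The point is that $\mathrm{SL}_2(\ringO_{-m})$ and $\mathrm{PSL}_2(\ringO_{-m})$ act on $\Hy$ with the very same Bianchi fundamental polyhedron, the very same refined cell complex, and the very same decomposition into orbits of cells; for every cell $\sigma$ one has $\operatorname{Stab}_{\mathrm{SL}_2}(\sigma) = \pi^{-1}\bigl(\operatorname{Stab}_{\mathrm{PSL}_2}(\sigma)\bigr)$. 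In particular, for any vertex $v$, passing to $\pi$--preimages of stabilisers is a bijection between the set of orbits of edges adjacent to $v$ as counted for $\mathrm{PSL}_2(\ringO_{-m})$ and the set as counted for $\mathrm{SL}_2(\ringO_{-m})$: an edge whose $\mathrm{PSL}_2$--stabiliser has isomorphism type $T$ is the same edge, now with $\mathrm{SL}_2$--stabiliser of type $\pi^{-1}(T)$, and likewise for $v$ itself. So it suffices to determine $\pi^{-1}(T)$ for each $T$ appearing in the table of Lemma~\ref{geometricRigiditytheorem} (this covers all vertices of the refined cell complex, all of which lie in $\Hy$ and hence are non-singular with finite stabiliser of one of those types, by Lemmata~\ref{finiteSubgroups} and~\ref{pointwiseStabilizers}).

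To compute these preimages, the single structural input is that $-1$ is the only element of order $2$ in $\mathrm{SL}_2(\C)$: an involution other than $\pm 1$ would have eigenvalues $1$ and $-1$, hence determinant $-1$. Thus each $\pi^{-1}(T)$ is a central extension of $T$ by $\Z/2$ having $-1$ as its unique involution. This forces $\pi^{-1}(\{1\}) \cong \Z/2$; $\pi^{-1}(\Z/2) \cong \Z/4$ (a lift of the generator squares to $-1$, as it cannot square to $1$); $\pi^{-1}(\Z/3) \cong \Z/6$; $\pi^{-1}(\Kleinfourgroup)$ is the group of order $8$ with a single involution that surjects onto $\Kleinfourgroup$, i.e.\ the quaternion group; $\pi^{-1}(\Sthree)$ is the group of order $12$ with a single involution surjecting onto $\Sthree$, i.e.\ the binary dihedral group of $12$ elements; and $\pi^{-1}(\Afour)$ is the binary tetrahedral group $\mathrm{SL}_2(\F_3)$ of order $24$. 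Equivalently, these are exactly the lifts through $\pi$ of Klein's list in Lemma~\ref{finiteSubgroups}, cf.~\cite{binaereFormenMathAnn9}. In particular $\pi^{-1}(\Z/\ell) \cong \Z/2\ell$ for $\ell = 2$ and for $\ell = 3$.

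Combining the two steps, an edge adjacent to $v$ has $\mathrm{SL}_2(\ringO_{-m})$--stabiliser isomorphic to $\Z/2\ell$ exactly when it has $\mathrm{PSL}_2(\ringO_{-m})$--stabiliser isomorphic to $\Z/\ell$, and a vertex has $\mathrm{SL}_2(\ringO_{-m})$--stabiliser of type $\pi^{-1}(T)$ exactly when it has $\mathrm{PSL}_2(\ringO_{-m})$--stabiliser of type $T$; hence every entry of the table of Lemma~\ref{geometricRigiditytheorem} carries over verbatim after replacing each stabiliser type $T$ by $\pi^{-1}(T)$. I expect the only genuinely non-formal step to be the second one, the identification of the six central extensions, and even there the work is light once one invokes the uniqueness of the involution $-1$ together with the classification of the finite subgroups of $\mathrm{SL}_2(\C)$; the rest is the soft remark that a central isogeny alters neither the cell complex nor its orbit decomposition.
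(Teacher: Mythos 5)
Your proposal is correct and follows exactly the route the paper intends: the paper justifies this corollary only by the one-line remark that it is obtained "by observing the pre-images of the projection from $\mathrm{SL}_2(\ringO_{-m})$ to $\mathrm{PSL}_2(\ringO_{-m})$", and your argument simply fills in the details (trivial action of the kernel $\{\pm 1\}$, stabilisers pulling back to pre-images, and identification of the six central extensions via the uniqueness of the involution $-1$ in $\mathrm{SL}_2(\C)$). Nothing further is needed.
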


\section{The Fl\"oge cellular complex} \label{Floege cellular complex}

In order to obtain a cell complex with compact quotient space, we proceed in the following way due to Fl\"oge \cite{Floege}.
The boundary of $\Hy$ is the Riemann sphere~$\partial \Hy$, which, as a topological space, is made up of the complex plane $\C$ compactified with the cusp $\infty$.
The totally geodesic surfaces in $\Hy$ are the Euclidean vertical planes (we define \emph{vertical} as orthogonal to the complex plane) and the Euclidean hemispheres centred on the complex plane.
The action of the Bianchi groups extends continuously to the boundary \mbox{$\partial \Hy$}. 
The cellular closure of the refined cell complex in $\Hy \cup \partial \Hy $ consists of \mbox{$ \Hy$} and the set of cusps \mbox{$\left(\rationals (\sqrt{-m}) \cup \{\infty\}\right) \subset \left(\C \cup \{\infty\}\right) \cong \partial \Hy$.} 
The $\mathrm{SL_2}(\ringO_{-m})$--orbit of a cusp $\frac{\lambda}{\mu}$ in $\left(\rationals (\sqrt{-m}) \cup \{\infty\}\right)$ corresponds to the ideal class $[(\lambda, \mu)]$ of $\ringO_{-m}$. It is well-known that this does not depend on the choice of the representative $\frac{\lambda}{\mu}$.
We extend the refined cell complex to a cell complex $\widetilde{X}$ by joining to it, in the case that $\ringO_{-m}$ is not a principal ideal domain, the $\mathrm{SL_2}(\ringO_{-m})$--orbits  of the cusps $\frac{\lambda}{\mu}$ for which the ideal $(\lambda, \mu)$ is not principal. 
At these cusps, we equip $\widetilde{X}$ with the ``horoball topology'' described in~\cite{Floege}. This simply means that the set of cusps, which is discrete in \mbox{$\partial \Hy$}, is located at the hyperbolic extremities of $\widetilde{X}$ : No neighbourhood of a cusp, except the whole of $\widetilde{X}$, contains any other cusp.

We retract $\widetilde{X}$ in the following, $\mathrm{SL_2}(\ringO_{-m})$--equivariant, way.
On the Bianchi fundamental polyhedron, the retraction is given by the vertical projection (away from the cusp~$\infty$) onto its facets which are closed in $\Hy \cup \partial \Hy$. The latter are the facets which do not touch the cusp $\infty$, and are the bottom facets with respect to our vertical direction. The retraction is continued on $\Hy$ by the group action. It is proven in \cite{FloegePhD} that this retraction is continuous.
We call the retract of~$\widetilde{X}$ the \emph{Fl\"oge cellular complex} and denote it by $X$.
So in the principal ideal domain cases, $X$ is a retract of the refined cell complex, obtained by contracting the Bianchi fundamental polyhedron onto its cells which do not touch the boundary of $\Hy$.  
In \cite{RahmFuchs}, it is checked that the Fl\"oge cellular complex is contractible. 

A cell complex constructed by Mendoza \cite{Mendoza} that coincides in the principal ideal domain cases with the Fl\"oge cellular complex has been implemented by Vogtmann \cite{Vogtmann}.

\section{Equivariant Euler characteristic} \label{Equivariant Euler characteristic}
We use the Euler characteristic to check the geometry of the quotient $\Gamma \backslash X$.
Recall the following definitions and proposition.
\begin{df}[Euler characteristic]
Suppose $\Gamma'$ is a torsion-free group. Then we define its Euler characteristic as 
$\chi(\Gamma')=\sum_i (-1)^i \dim \Homol_i(\Gamma';\rationals).$
Suppose further that $\Gamma'$ is a torsion-free subgroup of finite index in a group $\Gamma$.
Then we define the {\em Euler characteristic} of $\Gamma$  as 
$\chi(\Gamma)= \frac{\chi(\Gamma')}{[\Gamma : \Gamma']}.$
\end{df}
The latter formula is well-defined because of \cite{Brown}*{IX.6.3}. If in the first formula we drop the condition that $\Gamma'$ is torsion-free, we obtain the \emph{naive} Euler characteristic.

\begin{df}[Equivariant Euler characteristic]
Suppose $X$ is a $\Gamma$-complex such that
\begin{itemize}
\item
every isotropy group $\Gamma_\sigma$ is of finite homological type;
\item
$X$ has only finitely many cells mod $\Gamma$.
\end{itemize}
Then we define the $\Gamma$-{\em equivariant Euler characteristic} of $X$ as
$ \chi_\Gamma(X) := \sum\limits_\sigma (-1)^{\mathrm{dim}\sigma} \chi(\Gamma_\sigma),$
where~$\sigma$ runs over the orbit representatives of cells of $X$.
\end{df}
\begin{prop}[\cite{Brown}*{IX.7.3 e'}]
Suppose $X$ is a $\Gamma$-complex such that $ \chi_\Gamma(X)$ is defined.
If $\Gamma$ is virtually torsion-free, then $\Gamma$ is of finite homological type and $ \chi(\Gamma) =  \chi_\Gamma(X).$
\end{prop}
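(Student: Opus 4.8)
The plan is to give the standard argument (the one behind \cite{Brown}*{IX.7.3 e'}), reducing everything to a torsion-free subgroup of finite index. Since $\Gamma$ is virtually torsion-free, I would first choose a torsion-free normal subgroup $\Gamma' \trianglelefteq \Gamma$ of finite index $n := [\Gamma : \Gamma']$ (take any finite-index torsion-free subgroup and intersect its finitely many conjugates). By the definition of the Euler characteristic recalled above, $\chi(\Gamma) = \chi(\Gamma')/n$, so it suffices to prove two things: that $\chi_{\Gamma'}(X) = n\thinspace\chi_\Gamma(X)$, and that $\chi(\Gamma') = \chi_{\Gamma'}(X)$; along the way one also obtains that $\Gamma'$, hence $\Gamma$, is of finite homological type. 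The second identity is where one uses that $X$ is acyclic --- which is exactly the situation in which this proposition is applied here, the Fl\"oge cellular complex being contractible.

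For the first identity, view $X$ as a $\Gamma'$-complex. Each $\Gamma$-orbit of a cell $\sigma$ splits into finitely many $\Gamma'$-orbits, naturally indexed by the double coset space $\Gamma' \backslash \Gamma / \Gamma_\sigma$; the double coset of $g \in \Gamma$ contributes the $\Gamma'$-orbit of $g\sigma$, whose stabiliser is $\Gamma' \cap g\Gamma_\sigma g^{-1}$. This group is torsion-free (being a subgroup of $\Gamma'$) and of finite index in the conjugate $g\Gamma_\sigma g^{-1}$, which is of finite homological type; hence it is itself of finite homological type, and by multiplicativity of the Euler characteristic under finite-index inclusions \cite{Brown}*{IX.6} together with its conjugation invariance, its Euler characteristic equals $[\thinspace \Gamma_\sigma : \Gamma_\sigma \cap \Gamma' \thinspace]\thinspace \chi(\Gamma_\sigma)$, where the normality of $\Gamma'$ is used to replace $g\Gamma_\sigma g^{-1} \cap \Gamma'$ by a conjugate of $\Gamma_\sigma \cap \Gamma'$. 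These indices are the sizes of the $\Gamma_\sigma$-orbits on the $n$-element set $\Gamma' \backslash \Gamma$, and those orbits partition the set, so they sum to $n$. Since $X$ has finitely many cells modulo $\Gamma$ it has finitely many modulo $\Gamma'$, so $\chi_{\Gamma'}(X)$ is defined, and grouping the $\Gamma'$-cells according to their $\Gamma$-orbit gives $\chi_{\Gamma'}(X) = \sum_\sigma (-1)^{\dim \sigma}\thinspace n\thinspace \chi(\Gamma_\sigma) = n\thinspace \chi_\Gamma(X)$.

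For the second identity I would use the equivariant homology spectral sequence of the $\Gamma'$-action on $X$ with rational coefficients, $E^1_{p,q} = \bigoplus_\sigma \Homol_q(\Gamma'_\sigma;\thinspace \rationals) \Rightarrow \Homol_{p+q}(\Gamma';\thinspace \rationals)$, where $\sigma$ ranges over $\Gamma'$-orbit representatives of $p$-cells and the identification of the abutment with $\Homol_*(\Gamma';\thinspace \rationals)$ rests on the acyclicity of $X$. Each $\Gamma'_\sigma$ is torsion-free and of finite homological type, so $\Homol_q(\Gamma'_\sigma;\thinspace \rationals)$ is finite-dimensional, vanishes for $q$ large, and, by the definition above, $\sum_q (-1)^q \dim_\rationals \Homol_q(\Gamma'_\sigma;\thinspace \rationals) = \chi(\Gamma'_\sigma)$; with only finitely many cell orbits, the same spectral sequence (run with an arbitrary $\Z$-finitely generated coefficient module) shows that $\Gamma'$ --- and therefore $\Gamma$ --- is of finite homological type. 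The alternating sum of dimensions is unchanged from the $E^1$-page to the abutment, so $\chi(\Gamma') = \sum_{p,q} (-1)^{p+q} \dim_\rationals E^1_{p,q} = \sum_p (-1)^p \sum_\sigma \chi(\Gamma'_\sigma) = \chi_{\Gamma'}(X)$. Combining the two identities, $\chi(\Gamma) = \chi(\Gamma')/n = \chi_{\Gamma'}(X)/n = \chi_\Gamma(X)$.

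The ingredients --- double-coset bookkeeping, multiplicativity of $\chi$ under finite index, invariance of Euler characteristics along a spectral sequence --- are all standard, so I do not expect a genuine obstacle; if anything, the first step (the double-coset reduction) is the most delicate to write out carefully. The two points that really need attention are that every sum occurring is finite (guaranteed by the hypothesis that $\chi_\Gamma(X)$ is defined, which forces finitely many cell orbits and finite-homological-type stabilisers) and that the abutment of the spectral sequence really is $\Homol_*(\Gamma';\thinspace \rationals)$, which is where the acyclicity of $X$ is indispensable.
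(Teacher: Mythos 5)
The paper does not actually prove this proposition --- it is quoted from Brown, IX.7.3(e') --- and your argument is precisely the standard one behind that citation: reduce to a torsion-free normal finite-index subgroup via double-coset counting and multiplicativity of $\chi$, then run the equivariant spectral sequence with rational coefficients over the acyclic complex. Your proof is correct (the only cosmetic omission is that the $E^1$-term a priori carries orientation modules in its coefficients, which changes nothing since $\chi(G;M)=\chi(G)\dim_{\rationals}M$ for a one-dimensional $M$, and is moot in this paper where stabilisers fix cells pointwise), and you are right that acyclicity of $X$ is an implicit hypothesis of the cited statement, supplied here by the contractibility of the Fl\"oge complex.
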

Let now $\Gamma$ be $\text{PSL}_2\bigl(\mathcal{O}_{-m}\bigr)$. Then the above proposition applies to $X$ taken to be Fl\"oge's (or still, Mendoza's) $\Gamma$-equivariant deformation retract of $\Hy$.
 Using $\chi(\Gamma_\sigma) = \frac{1}{\mathrm{card}(\Gamma_\sigma)}$ for $\Gamma_\sigma$ finite,
 the fact that the singular points have stabiliser $\Z^2$,
and the torsion-free Euler characteristic 
$$\chi(\Z^2) = \sum\limits_i (-1)^i \mathrm{rank}_\Z (\Homol_i \Z^2) = 1-2+1 = 0,$$ we get the formula
$$ \chi(\Gamma) = 
\sum\limits_\sigma (-1)^{\mathrm{dim}\sigma} \frac{1}{\mathrm{card}(\Gamma_\sigma)},$$
where $\sigma$ runs over the orbit representatives of cells of $X$ with finite stabilisers.
\begin{prop} \label{Euler_characteristic_vanishes}
The Euler characteristic $\chi(\Gamma)$ vanishes for the Bianchi groups. 
\end{prop}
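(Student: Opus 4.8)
The plan is to reduce the vanishing of $\chi(\Gamma)$, for $\Gamma := \mathrm{PSL}_2(\ringO_{-m})$, to the standard fact that a complete finite-volume hyperbolic $3$-manifold has Euler characteristic zero, and then to descend this along a finite-index inclusion. First I would invoke Selberg's lemma to fix a torsion-free subgroup $\Gamma'$ of finite index in $\Gamma$ (alternatively, $\Gamma'$ can be taken to be the kernel of reduction modulo a prime ideal of $\ringO_{-m}$ of sufficiently large norm, which by Lemma~\ref{finiteSubgroups} kills all finite-order elements). By the definition of the Euler characteristic recalled above (see \cite{Brown}), $\chi(\Gamma) = \chi(\Gamma')/[\Gamma:\Gamma']$, so it is enough to prove $\chi(\Gamma') = 0$.

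Since $\Gamma'$ is torsion-free, it acts freely and properly discontinuously on the contractible space $\Hy$, so $M := \Gamma' \backslash \Hy$ is an aspherical manifold with $\pi_1(M) \cong \Gamma'$; hence $\chi(\Gamma') = \chi(M)$. Because $\Gamma$ is a lattice in $\mathrm{PSL}_2(\C)$ and $\Gamma'$ has finite index in it, $M$ is a complete hyperbolic $3$-manifold of finite volume with finitely many cusps. By the Margulis lemma and the thick--thin decomposition, $M$ is the interior of a compact $3$-manifold $\overline{M}$ whose boundary $\partial\overline{M}$ is a disjoint union of $2$-tori: each cusp cross-section is a closed Euclidean surface obtained as a quotient of $\R^2$ by a rank-$2$, torsion-free group of orientation-preserving isometries, namely the parabolic part of a cusp stabiliser of $\Gamma' \subset \mathrm{PSL}_2(\C)$, and such a quotient is a torus. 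As $M$ is homotopy equivalent to $\overline{M}$, we get $\chi(M) = \chi(\overline{M})$. Finally, for a compact odd-dimensional manifold with boundary one has $\chi(\overline{M}) = \tfrac{1}{2}\chi(\partial\overline{M})$: the double of $\overline{M}$ along $\partial\overline{M}$ is a closed $3$-manifold, of Euler characteristic $0$ by Poincar\'e duality, and $\chi(\mathrm{double}) = 2\chi(\overline{M}) - \chi(\partial\overline{M})$. Since $\partial\overline{M}$ is a union of tori, $\chi(\partial\overline{M}) = 0$, so $\chi(\overline{M}) = 0$, and therefore $\chi(\Gamma') = \chi(\Gamma) = 0$.

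The step that needs the most care is the description of the ends of $M$, i.e. that a finite-volume hyperbolic $3$-manifold is the interior of a compact manifold whose boundary components are tori; this is classical but genuinely uses the geometry (Margulis lemma plus the classification of closed Euclidean surfaces). One could instead avoid passing to $\Gamma'$ altogether by working equivariantly: form a Borel--Serre-type bordification $\overline{\Hy}$ by attaching, to every cusp of $\Gamma$ --- those already carried by the Fl\"oge complex $X$ together with the missing apex $\infty$ --- a horospherical boundary piece, so that $\Gamma$ acts cocompactly on $\overline{\Hy}$; then $\chi(\Gamma) = \chi_\Gamma(\overline{\Hy}) = \tfrac{1}{2}\,\chi_\Gamma(\partial\overline{\Hy})$, and each boundary piece is a closed Euclidean $2$-orbifold (a quotient of $\R^2$ by a crystallographic group), hence of rational Euler characteristic $0$. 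A further alternative is to cite Harder's Gauss--Bonnet theorem, which gives $\chi = 0$ for any lattice in $\mathrm{PSL}_2(\C)$ directly from the fact that the symmetric space $\Hy$ is odd-dimensional. In every case, combined with the displayed formula $\chi(\Gamma) = \sum_\sigma (-1)^{\dim\sigma}/\mathrm{card}(\Gamma_\sigma)$, this turns the identity ``that weighted sum equals $0$'' into the desired consistency check on the computed quotient $\Gamma\backslash X$.
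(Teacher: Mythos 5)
Your argument is correct, but it is worth noting that the paper does not actually prove this proposition: it simply records it as a well-known fact and refers to \cite{RahmFuchs}. What you have written is therefore a genuine, self-contained proof rather than a reproduction of the paper's argument. The chain Selberg's lemma $\rightarrow$ $\chi(\Gamma)=\chi(\Gamma')/[\Gamma:\Gamma']$ $\rightarrow$ $\chi(\Gamma')=\chi(M)$ for the aspherical finite-volume manifold $M=\Gamma'\backslash\Hy$ $\rightarrow$ torus boundary via the thick--thin decomposition $\rightarrow$ doubling and Poincar\'e duality is sound; the only inputs you use beyond general topology are that $\mathrm{PSL}_2(\ringO_{-m})$ is a lattice in $\mathrm{PSL}_2(\C)$ (Bianchi--Humbert, which the paper relies on throughout) and that a torsion-free discrete cocompact group of orientation-preserving Euclidean isometries of a horosphere is $\Z^2$ (and even if it were a Klein bottle group the Euler characteristic of the cusp cross-section would still vanish, so nothing hinges on orientability there). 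Two small points deserve to be made explicit: the formula $\chi(\Gamma)=\chi(\Gamma')/[\Gamma:\Gamma']$ presupposes that $\Gamma'$ is of finite homological type, which in your setup is supplied precisely by the homotopy equivalence $M\simeq\overline{M}$ with $\overline{M}$ compact, so that step should logically come before you divide by the index; and the claim that the kernel of reduction modulo a prime of large norm is torsion-free needs the observation that the orders of torsion elements are bounded (which Lemma~\ref{finiteSubgroups} indeed provides for the quotient $\mathrm{PSL}_2$). Compared with the paper's bare citation, your proof has the advantage of making transparent why the displayed mass formula $\sum_\sigma(-1)^{\dim\sigma}/\mathrm{card}(\Gamma_\sigma)=0$ is a legitimate consistency check; your alternative routes (equivariant Gauss--Bonnet for odd-dimensional symmetric spaces, or a Borel--Serre-type bordification) are also standard and correct, and the Gauss--Bonnet one is the quickest if one is willing to quote Harder.
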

This is a well-known fact, see for instance \cite{RahmFuchs} for a proof.  We obtain a ``mass formula''
$$
0  =  \sum\limits_\sigma (-1)^{\mathrm{dim}\sigma} \frac{1}{\mathrm{card}(\Gamma_\sigma)},
$$
which allows us to check the topology of the computed quotient space. 
For example, in the case $m = 427$, the mass formula takes the expression
$$ 184
+\frac{12}{2}
+\frac{18}{3}
+\frac{4}{6}
 -\left(
441
+\frac{16}{2}
+\frac{20}{3}
\right) +259 = 0, $$
which comes from 184 trivially stabilised vertices, 12 vertices with stabiliser of order two, 18 vertices with stabiliser of order three, 4 vertices with stabiliser of type $\Sthree$, 441~trivially stabilised edges, 16 edges with stabiliser of order two, 20 edges with stabiliser of order three, and 259 two-cells in the quotient cell complex.
Tables with the expression in other cases, including all cases of class number 2, are given in \cite{RahmThesis}.
A further check which has been carried out on the results of \cite{BianchiGP} is the vanishing of the naive Euler characteristic, which is proven in \cite{Vogtmann}.

\section{The maps induced on cohomology by finite subgroup inclusions} \label{The maps induced on cohomology by finite subgroup inclusions}

Concerning the possible finite stabiliser groups of vertices in hyperbolic space, we can determine their integral homology by the methods described in \cite{AdemMilgram}. This has been done in \cite{SchwermerVogtmann}, and we obtain their homology with $\Z/ \ell$--coefficients by the universal coefficient theorem. Then it only remains to correct a minor typographical error for $\Afour$ in order to obtain the following lemma.
\begin{lemma}[Schwermer/Vogtmann] \label{finiteSubgroupsHomology}
The homology with trivial $\Z$-- respectively $\Z/\ell$--coefficients, for $\ell = 2$ or $3$, of the finite subgroups of PSL$_2(\ringO_{-m})$ listed in lemma~{\rm \ref{finiteSubgroups}} is
\tiny
\begin{alignat*}6
&
\Homol_q(\Z/n; \thinspace  \thinspace \Z)
&
 \cong
 & 
  \begin{cases}
   \Z, & q = 0, \\
   \Z/n, & q \medspace \mathrm{ odd}, \\
   0, & q \medspace \mathrm{ even}, \medspace q > 0;
   \end{cases}
&
\quad
& 
 \Homol_q(\Z/n; \thinspace \Z/n)
&\cong
&\thinspace \Z/n, \medspace \mathrm{for} \medspace n, \thinspace q \in \N \cup \{0\};
&\quad
  &
&
&
\\
& \Homol_q({\mathcal{D}}_2; \thinspace \Z)
 &
 \cong 
 &
\begin{cases}
 \Z, & q = 0, \\
 (\Z/2)^\frac{q+3}{2}, & q \medspace \mathrm{ odd}, \\
 (\Z/2)^\frac{q}{2}, & q \medspace \mathrm{ even}, \medspace q > 0;
 \end{cases}
&\quad&
\Homol_q({\mathcal{D}}_2; \thinspace \Z/2) &
\cong
& (\Z/2)^{q+1}
&\quad&
   \Homol_q({\mathcal{D}}_2; \thinspace \Z/3) &=& 0, q \geq 1;\\
&  \Homol_q({\mathcal{S}}_3; \thinspace \Z) &\cong& 
\begin{cases}
 \Z, & q = 0, \\
 \Z/2, & q \equiv 1 \mod 4, \\
 0, & q \equiv 2 \mod 4, \\
 \Z/6, & q \equiv 3 \mod 4, \\
 0, & q \equiv 0 \mod 4, \medspace q > 0;
 \end{cases} 
 & \quad &
  \Homol_q({\mathcal{S}}_3; \thinspace \Z/3) &\cong& 
\begin{cases}
 \Z/3, & q = 0, \\
 0, & q \equiv 1 \mod 4, \\
 0, & q \equiv 2 \mod 4, \\
 \Z/3, & q \equiv 3 \mod 4, \\
 \Z/3, & q \equiv 0 \mod 4, \medspace q > 0;
 \end{cases} 
 &\quad & \Homol_q({\mathcal{S}}_3; \thinspace \Z/2) &\cong& \Z/2, q \in \N \cup \{0\};\\
&  \Homol_q({\mathcal{A}}_4; \thinspace \Z) &\cong& 
\begin{cases}
 \Z, & q = 0, \\
 (\Z/2)^k \oplus \Z/3, & q = 6k+1, \\
 (\Z/2)^k \oplus \Z/2, & q = 6k+2, \\
 (\Z/2)^k \oplus \Z/6, & q = 6k+3, \\
 (\Z/2)^k , & q = 6k+4, \\
 (\Z/2)^k \oplus \Z/2 \oplus \Z/6, & q = 6k+5, \\
 (\Z/2)^{k+1}, & q = 6k+6. \\
 \end{cases} 
& \qquad &
\Homol_q({\mathcal{A}}_4; \thinspace \Z/2) &\cong&
 \begin{cases}
\Z/2, & q = 0, \\
(\Z/2)^{2k}, & q = 6k+1, \\
(\Z/2)^{2k+1}, & q = 6k+2, \\
(\Z/2)^{2k+2}, & q = 6k+3, \\
(\Z/2)^{2k+1} , & q = 6k+4, \\
(\Z/2)^{2k+2}, & q = 6k+5, \\
(\Z/2)^{2k+3}, & q = 6k+6. \\
\end{cases} 
 &\quad & \Homol_q({\mathcal{A}}_4; \thinspace \Z/3) &\cong &\Z/3, q \in \N \cup \{0\}.
\end{alignat*}
\normalsize
\end{lemma}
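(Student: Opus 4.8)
The plan is to prove the table one isomorphism class at a time, using in each case the lightest available machinery, and then to read off every $\Z/\ell$-coefficient row from the corresponding integral row by the universal coefficient theorem. \textbf{First}, for the cyclic groups $\Z/n$ I would use the classical $2$-periodic free $\Z[\Z/n]$-resolution of $\Z$, with differentials $\times(t-1)$ and $\times N$, where $N = 1 + t + \cdots + t^{n-1}$: tensoring down with $\Z$ leaves the alternating maps $0$ and $\times n$, so $\Homol_q(\Z/n;\Z)$ is $\Z$, $\Z/n$, $0$ in degrees $0$, odd, positive even; tensoring down with $\Z/n$ kills both maps, so $\Homol_q(\Z/n;\Z/n)\cong\Z/n$ in all degrees. \textbf{Second}, for $\Kleinfourgroup = \Z/2\times\Z/2$ I would feed $\Homol_*(\Z/2;\Z)$ into the Künneth formula for group homology: the $\otimes$-terms contribute two copies of $\Z/2$ in each positive degree from the edge indices ($i=0$ or $j=0$) plus one more in odd degrees from $(i,j)$ both odd, and the $\mathrm{Tor}$-terms contribute one more $\Z/2$ in each odd degree, which assembles to the stated exponents; over the field $\Z/2$, Künneth gives $\dim\Homol_q = q+1$ at once; and $\Homol_q(\Kleinfourgroup;\Z/3) = 0$ for $q>0$ since $|\Kleinfourgroup|$ is prime to $3$.

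\textbf{Third}, for $\Sthree$ and for the odd-torsion parts I would invoke the Cartan--Eilenberg/Swan stable-element description: when a finite group $G$ has abelian Sylow $\ell$-subgroup $P$, the $\ell$-primary part of $\Homol_*(G;\Z)$ is the subgroup of $N_G(P)/C_G(P)$-invariants of $\Homol_*(P;\Z_{(\ell)})$. For $\Sthree$ at $\ell=2$ one has $P = \Z/2 = N_G(P)$, so the $2$-part is all of $\Homol_*(\Z/2;\Z)$; at $\ell=3$ one has $P = \Z/3$ and $N_G(P)/C_G(P)\cong\Z/2$ acting by inversion, hence by $(-1)^k$ on $\Homol_{2k-1}(\Z/3;\Z)\cong\Z/3$, so the $3$-part survives exactly for $q\equiv 3 \pmod 4$; merging the two primary parts yields the $4$-periodic $\Homol_q(\Sthree;\Z)$, and the $\Z/2$- and $\Z/3$-rows follow by universal coefficients. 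The same recipe for $\Afour$ at $\ell=3$ has $P = \Z/3$ with $N_G(P) = C_G(P) = \Z/3$, so the $3$-primary part of $\Homol_*(\Afour;\Z)$ is simply $\Homol_*(\Z/3;\Z)$ — that is, $\Z/3$ in each odd degree — which accounts for all the $\Z/3$'s in the $\Afour$ row and gives $\Homol_q(\Afour;\Z/3)\cong\Z/3$ for all $q$.

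\textbf{Fourth}, the prime $2$ for $\Afour$ is the substantial step. Here $P = \Kleinfourgroup$ is normal, $C_G(P) = P$, and $N_G(P)/C_G(P) = \Afour/\Kleinfourgroup\cong\Z/3$ permutes the three involutions of $\Kleinfourgroup$, hence the three nonzero classes of $\Homol^1(\Kleinfourgroup;\F_2)$, cyclically. Writing $\Homol^*(\Kleinfourgroup;\F_2) = \F_2[x,y]$ with $|x| = |y| = 1$, this gives $\Homol^*(\Afour;\F_2)\cong(\F_2[x,y])^{\Z/3}$, the invariants of the order-$3$ subgroup of $\mathrm{GL}_2(\F_2)$; I would identify this ring as a free module of rank $[\mathrm{GL}_2(\F_2):\Z/3] = 2$ over the Dickson invariant subring $\F_2[c_2,c_3]$ (with $|c_2| = 2$, $|c_3| = 3$), on module generators in degrees $0$ and $3$, so its Poincar\'e series is $\frac{1+t^3}{(1-t^2)(1-t^3)}$; expanding it and passing to homology over the field $\F_2$ reproduces the $6$-periodic formula for $\dim_{\F_2}\Homol_q(\Afour;\F_2)$. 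To finish, I would pin down the integral $2$-torsion either by running the Bockstein spectral sequence off these $\F_2$-ranks, or — more explicitly — by reading $\Homol_*(\Afour;\Z)$ directly from the free $\Z\Afour$-resolution constructed in section~\ref{Appendix: The low terms of a free resolution for the alternating group on 4 objects}; assembling the $2$- and $3$-primary parts gives the stated $\Homol_q(\Afour;\Z)$, and this is exactly the computation against which one catches the typographical slip in the published Schwermer--Vogtmann table.

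I expect the only genuine obstacle to be this last step: identifying $(\F_2[x,y])^{\Z/3}$ (equivalently, checking that the relevant Lyndon--Hochschild--Serre or stable-element spectral sequence collapses) and then translating the $\F_2$-Betti numbers into integral homology, where one must keep track of which $\Z/2$'s are free summands and which are Bockstein-linked to $\Z$'s or $\Z/3$'s. Everything else is routine bookkeeping with periodic resolutions and the Künneth and universal coefficient theorems.
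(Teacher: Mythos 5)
Your proposal is correct, but it takes a genuinely different route from the paper: the paper does not reprove this lemma at all --- it cites Schwermer--Vogtmann, who obtained the integral rows by the methods of Adem--Milgram, derives the $\Z/\ell$ rows from them by the universal coefficient theorem, and only corrects a typographical error in the $\Afour$ entry (the appendix's explicit free $\Z[\Afour]$-resolution, built via Wall's lemma, serves as an independent check on that entry). You instead reconstruct the whole table from scratch: periodic resolutions for the cyclic groups, K\"unneth for $\Kleinfourgroup$, the Cartan--Eilenberg/Swan stable-element description for $\Sthree$ and for the odd part of $\Afour$, and the identification $\Homol^*(\Afour;\F_2)\cong\F_2[x,y]^{\Z/3}$ with Poincar\'e series $(1+t^3)/\bigl((1-t^2)(1-t^3)\bigr)$, whose expansion does reproduce the stated $6$-periodic mod-$2$ dimensions. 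This buys a self-contained verification that independently catches the typo the paper alludes to, at the cost of heavier machinery. Two small repairs are needed. First, your K\"unneth bookkeeping for $\Kleinfourgroup$ is garbled as written: in odd degree $q$ the tensor terms contribute only the two edge copies (pairs $(i,j)$ with both entries odd sum to an \emph{even} degree), and the Tor terms contribute $(q-1)/2$ copies, not one; the totals $(q+3)/2$ and $q/2$ are nevertheless correct, so this is a slip of exposition rather than of method. Second, the Bockstein spectral sequence is not needed to pass from $\dim_{\F_2}\Homol_q(\Afour;\F_2)$ to the integral $2$-torsion: the transfer argument makes restriction to the Sylow $2$-subgroup injective on the $2$-primary part, and $\Homol_{>0}(\Kleinfourgroup;\Z)$ has exponent $2$, so the $2$-torsion of $\Homol_*(\Afour;\Z)$ is elementary abelian and its ranks follow recursively from the mod-$2$ Betti numbers via the universal coefficient theorem.
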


With the Lyndon/Hochschild/Serre spectral sequence, we can further establish the following.

\begin{Lem}[Schwermer/Vogtmann \cite{SchwermerVogtmann}] \label{inducedMaps} 
Let $M$ be $\Z$ or $\Z/2$. Consider group homology with trivial $M$-coefficients. Then the following holds.
\begin{itemize}
\item Any inclusion $\Z/2 \to \Sthree$ induces an injection on homology.
\item An inclusion $\Z/3 \to \Sthree$ induces an injection on homology in degrees congruent to $3$ or $0 \mod 4$, and is otherwise zero.
\item Any inclusion $\Z/2 \to \Kleinfourgroup$ induces an injection on homology in all degrees.
\item An inclusion $\Z/3 \to \Afour$ induces injections on homology in all degrees.
\item An inclusion $\Z/2 \to \Afour$ induces injections on homology in degrees greater than $1$, and is zero on $\Homol_1$. 
\end{itemize}
\end{Lem}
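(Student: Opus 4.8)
The plan is to compute each of the five induced maps by using the Lyndon/Hochschild/Serre spectral sequence of the relevant extension, together with the explicit homology groups recorded in lemma~\ref{finiteSubgroupsHomology}. The key principle is that for a subgroup $H \leq G$ of index $[G:H]$ coprime to the coefficient characteristic, the composite $\Homol_*(H;M) \to \Homol_*(G;M) \to \Homol_*(H;M)$ (corestriction followed by restriction) is multiplication by $[G:H]$, hence an isomorphism onto its image; this immediately handles the cases where $[G:H]$ is coprime to the relevant prime, forcing injectivity. For the cases where $[G:H]$ is divisible by the prime, a direct spectral sequence argument or a transfer-and-invariants computation is needed instead.

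First I would treat the inclusions $\Z/3 \to \Sthree$ and $\Z/3 \to \Afour$ for $M = \Z$ or $\Z/2$, and $\Z/2 \to \Sthree$ for the same coefficients: here the index is $2$, $4$, $3$ respectively, each coprime to the characteristic we care about in that bullet, so the restriction-corestriction composite being multiplication by a unit (after tensoring, or on the relevant primary part) shows the restriction map is injective. More precisely, for $\Z/3 \to \Afour$ the $3$-primary part of $\Homol_*(\Afour;\Z)$ is detected on the Sylow $3$-subgroup $\Z/3$, and by lemma~\ref{finiteSubgroupsHomology} we have $\Homol_q(\Afour;\Z/3) \cong \Z/3 \cong \Homol_q(\Z/3;\Z/3)$ in every degree, so restriction must be an isomorphism; the integral statement follows since the $2$-primary part of $\Homol_*(\Afour;\Z)$ plays no role in injectivity from $\Z/3$. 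For $\Z/3 \to \Sthree$, the transfer argument gives injectivity on the $3$-primary part of $\Homol_*(\Sthree;\Z)$, i.e.\ in degrees $\equiv 3 \bmod 4$ (where $\Homol_q(\Sthree;\Z)$ has a $\Z/3$ summand); in degrees $\equiv 1 \bmod 4$ the group $\Homol_q(\Sthree;\Z) \cong \Z/2$ has no $3$-torsion at all, so the map from $\Z/n$ with $n$ odd must be zero. One must also handle degrees $\equiv 0 \bmod 4$ with $M = \Z/3$ coefficients, where $\Homol_q(\Sthree;\Z/3) \cong \Z/3$, again by the Sylow detection argument. The claim about $\Z/2 \to \Kleinfourgroup$ is where the index is $2$ and the characteristic is $2$, so transfer alone does not suffice: here I would argue directly, noting that $\Kleinfourgroup \cong \Z/2 \times \Z/2$ and using the K\"unneth formula, so that $\Homol_*(\Kleinfourgroup; \Z/2) \cong \Homol_*(\Z/2;\Z/2) \otimes \Homol_*(\Z/2;\Z/2)$, from which the restriction to either factor is the evident split injection onto the subalgebra generated by one polynomial generator; the integral statement follows similarly from the known integral homology of $\Z/2 \times \Z/2$, where the restriction to a coordinate $\Z/2$ visibly hits the corresponding $\Z/2$ summands injectively.

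The real obstacle is the last bullet, the inclusion $\Z/2 \to \Afour$ with $M = \Z$ or $\Z/2$: here the index is $3$, coprime to $2$, so transfer does show that restriction is injective on the $2$-primary part of $\Homol_*(\Afour;\Z)$ --- but the claimed statement is that this fails in degree $1$ (it is zero there) and holds in degrees $>1$. The resolution is that $\Homol_1(\Afour;\Z) \cong \Afour^{\mathrm{ab}} \cong \Z/3$, which has no $2$-torsion, so the map from $\Homol_1(\Z/2;\Z) \cong \Z/2$ is necessarily zero for degree reasons, consistently with transfer (the $2$-primary part of $\Homol_1(\Afour;\Z)$ is zero). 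In degrees $q \geq 2$, by lemma~\ref{finiteSubgroupsHomology} the $2$-primary part of $\Homol_q(\Afour;\Z)$ is nonzero and, being detected on the Sylow $2$-subgroup $\Kleinfourgroup$, receives an injection from $\Homol_q(\Kleinfourgroup;\Z)$; composing with the already-established injection $\Homol_q(\Z/2;\Z) \hookrightarrow \Homol_q(\Kleinfourgroup;\Z)$ from the previous bullet, and checking that the composite $\Z/2 \hookrightarrow \Kleinfourgroup \hookrightarrow \Afour$ does not collapse --- which follows because the transfer composite $\Homol_q(\Z/2) \to \Homol_q(\Afour) \to \Homol_q(\Z/2)$ is multiplication by $3$, a $2$-adic unit --- yields injectivity of $\Z/2 \to \Afour$ on homology in all degrees $q \geq 2$, and likewise with $\Z/2$-coefficients via the universal coefficient theorem. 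I would present this as the Schwermer/Vogtmann computation, citing \cite{SchwermerVogtmann} and lemma~\ref{finiteSubgroupsHomology}, and flag the degree-$1$ exception as the one place where the naive ``transfer implies injective'' reasoning must be supplemented by the abelianization computation.
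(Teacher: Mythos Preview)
Your transfer principle is stated backwards. The composite that equals multiplication by $[G:H]$ is the one on $\Homol_*(G;M)$, namely $i_* \circ \mathrm{tr}$; the composite you wrote, $\mathrm{tr}\circ i_*$ on $\Homol_*(H;M)$, is governed by the Mackey double-coset formula and is \emph{not} multiplication by the index in general. What the correct formula gives you when $[G:H]$ is prime to $\ell$ is that $i_*$ is \emph{surjective} onto the $\ell$--primary part, not injective. For the bullets $\Z/2 \to \Sthree$, $\Z/3 \to \Sthree$, $\Z/3 \to \Afour$ this is still enough: compare ranks with lemma~\ref{finiteSubgroupsHomology} to see that source and target $\ell$--primary parts coincide, so surjectivity forces injectivity (and vanishing of the target $3$--part in degrees $\equiv 1 \bmod 4$ forces the zero map for $\Z/3 \to \Sthree$). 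Your K\"unneth argument for $\Z/2 \to \Kleinfourgroup$ is fine. So four of the five bullets can be repaired along your lines.

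The genuine gap is $\Z/2 \to \Afour$. First, the index is $6$, not $3$. Second, if you actually compute $\mathrm{tr}^{\Afour}_{\Z/2}\circ i_*$ via Mackey, the only double cosets contributing in positive degree are those with representative in the normaliser $N_{\Afour}(\Z/2)=\Kleinfourgroup$, and since $\Kleinfourgroup$ is abelian these contribute the identity; there are $[\Kleinfourgroup:\Z/2]=2$ of them, so the composite is multiplication by $2$, which is zero mod $2$. Hence the transfer route gives no information here, and your sentence ``the transfer composite \dots\ is multiplication by $3$, a $2$-adic unit'' is false. Something extra is genuinely needed. The paper supplies it in two pieces: sub-lemma~\ref{LHS} uses the Lyndon/Hochschild/Serre spectral sequence for $1\to\Kleinfourgroup\to\Afour\to\Z/3\to1$ together with lemma~\ref{moduleOverRingOfCharacteristic2} to collapse the $E^2$--page to the column $p=0$, identifying $\Homol_q(\Afour;\Z/2)\cong\Homol_0(\Z/3;\Homol_q(\Kleinfourgroup;\Z/2))$; then the explicit free resolution for $\Afour$ built in the appendix (via Wall's lemma) is used to check that the composite $\Z/2\hookrightarrow\Kleinfourgroup\hookrightarrow\Afour$ is nonzero on $\Homol_q(-;\Z/2)$ for every $q\ge 2$. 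An alternative repair, if you prefer to stay conceptual, is to pass to cohomology, identify $\Homol^*(\Afour;\F_2)$ with the $\Z/3$--invariants in $\F_2[x,y]$, and verify that the degree-$2$ invariant $x^2+xy+y^2$ restricts nontrivially to each $\Z/2$; but you must do \emph{some} such computation---transfer alone cannot close this case.
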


Schwermer and Vogtmann prove this for $M = \Z$. \\
We will make use of the following sub-lemmata to prove it for~$M = \Z/2$. As the only automorphism of~$\Z/2$ is the identity, \mbox{$\Z/2$--coefficients} are always trivial coefficients.

\begin{sublem} \label{inclusionsIntoD2}
Lemma \emph{\ref{inducedMaps}} holds in the case $M = \Z/2$ for the inclusions into $\Kleinfourgroup$.
\end{sublem}
\begin{proof}
Consider the Lyndon/Hochschild/Serre spectral sequence with $\Z/2$-coefficients of the trivial extension $1 \to \Z/2 \to \Kleinfourgroup \to  \Z/2 \to 1 $. It takes the form
$$ E^2_{p,q} = \Homol_p \left(\Z/2; \Homol_q(\Z/2;\Z/2)\right) \Rightarrow \Homol_{p+q}(\Kleinfourgroup; \Z/2).$$
As $\Homol_q(\Z/2;\Z/2) \cong \Z/2$ for all $q \in \N \cup \{0\}$, we obtain $E^2_{p,q} = \Z/2$ for all $p, q \in \N \cup \{0\}$. 
As we know from lemma~\ref{finiteSubgroupsHomology} that 
$\Homol_q(\Kleinfourgroup;\Z/2) \cong (\Z/2)^{q+1}$, all the differentials must be zero and $E^2 = E^\infty$. Hence we obtain the claimed injections on homology. 
\end{proof}

\begin{sublem} \label{inclusionsIntoS3}
Lemma \emph{\ref{inducedMaps}} holds in the case $M = \Z/2$ for the inclusions into $\Sthree$.
\end{sublem}
\begin{proof}
Consider the Lyndon/Hochschild/Serre spectral sequence with $\Z/2$-coefficients of the non-trivial extension $1 \to \Z/3 \to \Sthree \to \Z/2 \to 1$.
It takes the form
$$ E^2_{p,q} = \Homol_p \left(\Z/2; \Homol_q(\Z/3; \Z/2) \right) \Rightarrow \Homol_{p+q}(\Sthree; \Z/2).$$
As $\Homol_q(\Z/3; \Z/2) = 0$ for $q > 0$, the $E^2$-page is concentrated in the row $q = 0$ and equals the $E^\infty$-page. Thus we have isomorphisms 
$ \Homol_p \left(\Z/2; \Homol_0(\Z/3; \Z/2) \right) \cong \Homol_{p}(\Sthree; \Z/2),$ from which we obtain the claimed morphisms on homology.
\end{proof}

Let $t$ be a generator of $\Z/3$. Let $F$ be the periodic resolution of $\Z$ over $\Z[\Z/3]$ given by 
$$\xymatrix{ \ldots \ar[r]^{t-1 \qquad}  & \Z[\Z/3] \ar[rr]^{t^2+t+1} & & \Z[\Z/3] \ar[r]^{t-1}  & \Z[\Z/3] \ar[rrr]^{\rm augmentation} & & & \Z.}$$

\begin{Lem} \label{moduleOverRingOfCharacteristic2}
Let $A$ be an abelian group consisting only of elements of order $2$ and the identity element.
Then $F \otimes_{\Z[\Z/3]} A$ is exact in degrees greater than zero, regardless of the $\Z[\Z/3]$-module structure attributed to $A$.
\end{Lem}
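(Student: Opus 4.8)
The plan is to compute the chain complex $F \otimes_{\Z[\Z/3]} A$ explicitly and observe that all the differentials past degree zero are multiplication by $2$ or by an odd number, and then exploit the hypothesis that $A$ is annihilated by $2$. Writing $t$ for the generator of $\Z/3$, the resolution $F$ in positive degrees is the $2$-periodic complex whose differentials alternate between $t-1$ and $t^2+t+1 = (t-1)^2 + 3(t-1) + 3$; more to the point, after tensoring we get a complex of the form
$$\xymatrix{ \cdots \ar[r]^{t-1} & A \ar[r]^{N} & A \ar[r]^{t-1} & A }$$
where $N$ denotes the endomorphism $a \mapsto a + ta + t^2 a$ induced by $t^2+t+1$, and where $t-1$ denotes $a \mapsto ta - a$. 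Here $t$ acts on $A$ through whatever $\Z[\Z/3]$-module structure we have fixed; note $t^3 = \mathrm{id}$, so $t$ is an order-dividing-$3$ automorphism of the $\F_2$-vector space $A$.

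The key computation is the following linear-algebra identity over $\F_2$. Since $A$ is an $\F_2$-vector space and $t$ is an automorphism with $t^3 = 1$, the operator $t$ satisfies $t^3 - 1 = 0$, i.e. $(t-1)(t^2+t+1) = 0$ as operators on $A$; so the composites of consecutive differentials vanish and we genuinely have a complex. For exactness I would show $\ker(t-1) = \operatorname{image}(N)$ and $\ker(N) = \operatorname{image}(t-1)$. The clean way: over $\F_2$ the minimal polynomial of $t$ divides $X^3 - 1 = (X-1)(X^2+X+1)$, and $X-1$ and $X^2+X+1$ are coprime in $\F_2[X]$ (indeed $X\cdot(X^2+X+1) + (X^2+1)\cdot(X-1) = 1$, or more simply evaluate: they share no root in $\overline{\F_2}$). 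Hence $A = A^{t} \oplus A'$ where $A^t = \ker(t-1)$ is the fixed subspace and $A' = \ker(t^2+t+1)$, and on $A^t$ the operator $N$ acts as $a \mapsto 3a = a$ (an isomorphism, since $3$ is odd) while $t-1$ acts as $0$, whereas on $A'$ the operator $t-1$ is injective (as $1$ is not a root of its minimal polynomial $X^2+X+1$ there) hence an isomorphism, while $N = t^2+t+1$ acts as $0$. Feeding this decomposition into the periodic complex, each positive-degree homology group is a direct sum of $\ker/\operatorname{image}$ pieces that are manifestly zero.

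I would carry this out in the order: (1) write down $F \otimes_{\Z[\Z/3]} A$ as the periodic complex above; (2) record that the two differentials compose to zero because $t^3 = 1$; (3) prove the splitting $A = \ker(t-1) \oplus \ker(t^2+t+1)$ using coprimality of $X-1$ and $X^2+X+1$ in $\F_2[X]$ — this is where the characteristic-$2$ hypothesis on $A$ is essential, since over $\Z$ or $\Z/3$ no such splitting exists and the homology is exactly the Tate cohomology $\Homol^*(\Z/3;A)$ which need not vanish; (4) on each summand identify which of $t-1$, $N$ is an isomorphism and which is zero, using that $3$ is a unit in $\F_2$; (5) conclude that $\Homol_q(F \otimes A) = 0$ for $q > 0$. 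The main obstacle is really just step (3): making sure the coprimality argument is phrased so it applies to an arbitrary, possibly infinite-dimensional $\F_2$-vector space with an order-$3$ operator — but the polynomial identity $X(X^2+X+1) + (X^2+1)(X-1) = 1$ in $\F_2[X]$ gives the splitting functorially and with no finiteness assumption, so there is no real difficulty.
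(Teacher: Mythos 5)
Your proof is correct, but it takes a genuinely different route from the paper's. The paper argues entirely by hand at the level of elements: it first deduces the inclusions $\im(t^2+t+1)\subset\ker(t-1)$ and $\im(t-1)\subset\ker(t^2+t+1)$ from $(t^2+t+1)(t-1)=0$, and then proves the reverse inclusions by direct computation --- for $v\in\ker(t-1)$ one has $(t^2+t+1)v=3v=v$ since $2v=0$, and for $v\in\ker(t^2+t+1)$ one checks that $(t-1)(tv)=v$. Your argument instead packages the hypothesis as ``$A$ is a module over $\F_2[X]/(X^3-1)$'' and uses the coprimality of $X-1$ and $X^2+X+1$ in $\F_2[X]$ (via the Bezout identity $X(X^2+X+1)+(X^2+1)(X-1)=1$) to split $A=\ker(t-1)\oplus\ker(t^2+t+1)$ functorially, after which each differential is visibly either zero or an isomorphism on each summand; the periodic complex then has no positive-degree homology. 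Both are complete: the paper's version is shorter and requires no structure theory, while yours exposes the real reason the lemma holds --- the group order $3$ is invertible in characteristic $2$, so $\F_2[\Z/3]$ is semisimple and the Tate homology $\hat{\Homol}_*(\Z/3;A)$ vanishes for any such $A$ --- and your splitting argument is phrased so that it applies verbatim to arbitrary (possibly infinite-dimensional) $A$ and generalises to any $\Z/\ell$ acting on a group annihilated by an integer prime to $\ell$. One tiny caveat: the phrase ``minimal polynomial of $t$'' is not needed and is slightly awkward for infinite-dimensional $A$; your own remark that the Bezout identity gives the splitting directly already repairs this, so nothing is missing.
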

\begin{proof}
We will show the two equations \begin{center} $\im (t^2+t+1) = \ker (t-1)$ and 
\mbox{$\ker (t^2+t+1) = \im (t-1)$.} \end{center}
As $t^3 = 1$, the equation $(t^2+t+1)(t-1) = 0$ holds, and yields the inclusions \begin{center}
$\im (t^2+t+1) \subset \ker (t-1)$  and $\ker (t^2+t+1) \supset \im (t-1)$. \end{center}
Now we want to show that $\im (t^2+t+1) \supset \ker (t-1)$.
Let $v \in \ker (t-1)$. Then $(t-1)\cdot v = 0$, or equivalently, $t \cdot v = v$. 
We apply this three times to obtain $(t^2+t+1)\cdot v = 3v$. As $2v = 0$ in~$A$, we have $(t^2+t+1)\cdot v = v$ and hence the claimed inclusion.
\\
It remains to show that $\ker (t^2+t+1) \subset \im (t-1)$. Let $v \in \ker (t^2+t+1)$.
\\
We will see that $t \cdot v$ maps to  $v$ under multiplication by $t-1$.
Namely, \\
$(t-1)\cdot (t \cdot v) = t^2 \cdot v -t \cdot v = -2t \cdot v -v$ because $t^2 \cdot v + t \cdot v +v = 0$. Using that multiplication by $2$ is zero on $A$, we obtain the image $v$ and hence the last inclusion.
\end{proof}

\begin{sublem}\label{LHS}
The $E^2$-page of the Lyndon/Hochschild/Serre spectral sequence with $\Z/2$--coefficients for the extension 
$1 \to \Kleinfourgroup \to \Afour \to \Z/3 \to 1 $ is concentrated in the column $p = 0$.
\end{sublem}
\begin{proof}
The $E^2$-page of the Lyndon/Hochschild/Serre spectral sequence with $\Z/2$--coefficients
is given by $E^2_{p,q} = \Homol_p \left(\Z/3; \Homol_q(\Kleinfourgroup; \Z/2)\right)$. 
\\
The action of $\Z/3$ on \mbox{$\Homol_q(\Kleinfourgroup; \Z/2) \cong (\Z/2)^{q+1}$} is determined by the non-trivial conjugation action of $\Z/3$ on $\Kleinfourgroup$. 
But applying lemma~\ref{moduleOverRingOfCharacteristic2}, we obtain 
$\Homol_p \left(\Z/3; (\Z/2)^{q+1} \right) = 0$ for $p > 0$ and any action of $\Z/3$ on  $(\Z/2)^{q+1}$.
\end{proof}

\mbox{We can now do the last two cases to prove lemma~\ref{inducedMaps} in the case $M = \Z/2$. }

\begin{sublem}
Lemma \emph{\ref{inducedMaps}} holds in the case $M = \Z/2$ for the inclusions into $\Afour$.
\end{sublem}
\begin{proof} ${}$

\begin{itemize}
\item For an inclusion $\Z/3 \to \Afour$, this follows from the fact that 
\mbox{$\Homol_p(\Z/3; \Z/2) = 0$ for all $p > 0$.}
\item For an inclusion $\Z/2 \to \Afour$, factorise by an inclusion $\Z/2 \to \Kleinfourgroup$.
Lemma~\ref{LHS} gives an isomorphism $\Homol_q(\Afour; \Z/2) \cong \Homol_0 \left(\Z/3; \Homol_q(\Kleinfourgroup; \Z/2)\right)$ for $q \in \N \cup  \{0\}.$ 
From lemma~\ref{inclusionsIntoD2} and the low terms of our resolution for $\Afour$ given in the appendix, we deduce that the induced map on homology is injective whenever $\Homol_q(\Afour; \Z/2)$ is non-zero. 
From lemma~\ref{finiteSubgroupsHomology}, we see that this is the case for all $q \in \N \cup \{0\}$ except for $q = 1$, where we obtain the zero map.
\end{itemize}
\end{proof}

\section{ The equivariant spectral sequence to group homology} \label{SpecSeq}
Let $\Gamma$ be a Bianchi group. We will use the Fl\"oge cellular complex $X$ to compute the group homology of $\Gamma$ with trivial $\Z$--coefficients, as defined in \cite{Brown}. We proceed following \cite{Brown}*{VII} and \cite{SchwermerVogtmann}.
Let us consider the homology $\Homol_*(\Gamma; C_\bullet(X))$ of $\Gamma$ with coefficients in the cellular chain complex~$C_\bullet(X)$ associated to $X$, and call it the $\Gamma$-equivariant homology of $X$.
As $X$ is contractible, the map $X \to pt.$ to the single point $pt.$ induces an isomorphism
$$\Homol_*(\Gamma; \thinspace C_\bullet(X)) \to \Homol_*(\Gamma; \thinspace C_\bullet(pt.)) \cong \Homol_*(\Gamma;\thinspace \Z).$$
Denote by $X^p$ the set of $p$-cells of $X$, and make use of that the stabiliser $\Gamma_\sigma$ in $\Gamma$ of any $p$-cell $\sigma$ of $X$ fixes $\sigma$ pointwise. Then from 
$$ C_p(X) = \bigoplus\limits_{\sigma \in X^p} \Z 
\cong \bigoplus\limits_{\sigma \thinspace\in \thinspace _\Gamma \backslash X^p} {\rm Ind}^\Gamma_{\Gamma_\sigma} \Z,$$
Shapiro's lemma yields
$$\Homol_{q}(\Gamma; \thinspace C_p(X)) 
\cong \bigoplus_{\sigma\thinspace\in\thinspace _\Gamma\backslash X^p}\Homol_q(\Gamma_\sigma; \thinspace \Z);$$
and the equivariant Leray/Serre spectral sequence takes the form 
$$
E^1_{p,q}=\bigoplus_{\sigma\thinspace\in\thinspace _\Gamma\backslash X^p}\Homol_q(\Gamma_\sigma; \thinspace \Z)\implies \Homol_{p+q}(\Gamma; \thinspace C_\bullet(X)),
$$
converging to the $\Gamma$-equivariant homology of $X$, which is, as we have already seen, isomorphic to 
$\Homol_{p+q}(\Gamma; \thinspace \Z)$ with the trivial action on the coefficients $\Z$.

We shall also make extensive use of the description given in \cite{SchwermerVogtmann}, of the $d^1$-differential in this spectral sequence. 
The technical difference to the cases of trivial class group, treated in \cite{SchwermerVogtmann}, 
is that the stabilisers of the singular points are free abelian groups of rank two. 
In particular, the $\Gamma$-action on our complex $X^\bullet$ is not a \textit{proper action} (in the sense that all stabilisers are finite). As a consequence, the resulting spectral sequence does not degenerate on the $E^2$-level like it does in Schwermer and Vogtmann's cases.
It is explained in \cite{RahmFuchs} how to handle the non-trivial $d^2$-differentials in this spectral sequence.

\subsection{The differentials} \label{the differentials}
Let us now describe how to compute explicitly the $d^1$-differentials,
 making use of the knowledge from lemma \ref{finiteSubgroups} about the isomorphism types of the stabilisers, and lemma \ref{inducedMaps} about their inclusions. 
The bottom row of the $E^1$-term, more precisely the chain complex given by the $E^1_{p,0}$-modules and the  $d^1_{p,0}$-maps, is equivalent to the $\Z$-chain complex giving the homology of the quotient space of our cell complex by the $\Gamma$-action.

From lemma \ref{pointwiseStabilizers}, we see that for $q>0$, the $E^1_{p,q}$-terms are concentrated in the two columns $p = 0$ and $p = 1$. So for $q>0$,  we only need to compute the differentials
$$
\bigoplus_{\sigma \thinspace\in \thinspace _\Gamma\backslash X^0} \Homol_q (\Gamma_\sigma; \Z)
\xleftarrow{\ d^1_{1,q}\ }
\bigoplus_{\sigma \thinspace\in \thinspace _\Gamma\backslash X^1} \Homol_q(\Gamma_\sigma; \Z).
$$

These differentials arise from the following cell stabiliser inclusions.
For any edge in $_\Gamma\backslash X^1$, we have, because it is fixed pointwise, an inclusion $\iota$ of its stabiliser into the stabiliser of its origin vertex. 
Choose any matrix $g$ which sends the origin vertex of this edge to its vertex representative in $_\Gamma\backslash X^0$.
The cell stabiliser inclusion associated to the origin of our edge is the composition of the conjugation by $g$ after the inclusion $\iota$.
Up to inner automorphisms of the origin vertex stabiliser, this conjugation map does not depend on the choice of $g$, because $g$ is determined up to multiplication with elements of the origin vertex stabiliser.
The cell stabiliser inclusion associated to the end of an edge is obtained analogously.
We see in \cite{Brown}*{VII.8} that these cell stabiliser inclusions induce the differential  $d^1$ of the equivariant spectral sequence.

As we see from lemma~\ref{pointwiseStabilizers}, 
the inclusions determined by lemma~\ref{inducedMaps} are the only non-trivial inclusions which occur in our $\mathrm{PSL}_2(\ringO_{-m})$-cell complex. 
Hence we can decompose the $d^1_{1,q}$ differential in the associated equivariant spectral sequence, for $q > 0$, into a 2-primary and a 3-primary part.

\begin{df}
For an abelian group $A$, the \emph{$\ell$-primary part} is the subgroup consisting of all elements of $A$ of $\ell$-power order.
\end{df}

\subsection{The 3-primary part} \label{3primary}

Denote by $(d^1_{1,q})_{(\ell)}$ the $\ell$-primary part of our $d^1_{1,q}$ differential.
It suffices to compute $(d^1_{1,1})_{(3)}$ and $(d^1_{1,3})_{(3)}$ to get the 3-primary part of our $d^1_{1,q}$ differential, because of the following.

\begin{cor} 
The $3$-primary part $(d^1_{1,q})_{(3)}$ is of period $4$ in $q$; and is zero for $q > 0$ even. 
\end{cor}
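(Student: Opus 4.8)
The plan is to reduce the statement about the differential $(d^1_{1,q})_{(3)}$ to statements about the $3$-primary parts of the homology of the finite subgroups together with the behaviour of the inclusion-induced maps described in Lemma~\ref{inducedMaps}. The key point is that the differential $d^1_{1,q}$ is assembled, edge by edge, from maps $\Homol_q(\Gamma_e; \Z) \to \Homol_q(\Gamma_v; \Z)$ induced by the cell-stabiliser inclusions $\Gamma_e \hookrightarrow \Gamma_v$, and that by Lemma~\ref{pointwiseStabilizers} the only edge stabilisers carrying $3$-torsion are cyclic of order $3$, while by Lemma~\ref{geometricRigiditytheorem} the adjacent vertex stabilisers that matter for $\ell=3$ are only $\Z/3$, $\Sthree$ and $\Afour$.

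First I would observe, using Lemma~\ref{finiteSubgroupsHomology}, that the $3$-primary part of $\Homol_q$ of each of $\Z/3$, $\Sthree$ and $\Afour$ is periodic of period dividing $4$ in $q$: for $\Z/3$ and $\Afour$ it is $\Z/3$ in every degree, and for $\Sthree$ it is $\Z/3$ exactly when $q \equiv 0$ or $3 \bmod 4$ (for $q>0$) and vanishes for $q \equiv 1, 2 \bmod 4$; in particular all these $3$-primary parts vanish in positive even degrees. Hence the source and target modules $\bigoplus \Homol_q(\Gamma_\sigma;\Z)_{(3)}$ of the differential $(d^1_{1,q})_{(3)}$ are literally the same abelian groups for $q$ and $q+4$, and both vanish for $q>0$ even; this already gives the ``zero for $q>0$ even'' half of the corollary.

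Next I would check that under the periodicity identification, the differential itself is carried to itself. For this I would invoke the periodic resolution $F$ over $\Z[\Z/3]$ displayed just before Lemma~\ref{moduleOverRingOfCharacteristic2}, which induces a degree-$4$ periodicity isomorphism on $\Homol_*(\Z/3;\Z)$ compatible with restriction/corestriction maps; combined with the analogous periodicity for $\Sthree$ and $\Afour$ (coming from their periodic cohomology, or directly from the Lyndon/Hochschild/Serre computations used to establish Lemma~\ref{inducedMaps}), the inclusion-induced maps $\Homol_q(\Z/3;\Z)_{(3)} \to \Homol_q(\Gamma_v;\Z)_{(3)}$ commute with the period-$4$ shift. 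Since $(d^1_{1,q})_{(3)}$ is a signed sum of exactly such maps, with the same edges and the same conjugating matrices $g$ occurring for all $q$, it follows that $(d^1_{1,q})_{(3)}$ and $(d^1_{1,q+4})_{(3)}$ agree under the identification of source and target, which is the period-$4$ claim.

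The main obstacle I anticipate is verifying that the periodicity isomorphisms on the homology of $\Sthree$ and $\Afour$ are genuinely compatible with the inclusion maps from $\Z/3$ in the precise way needed — i.e.\ that the degree-$4$ shift is natural with respect to these particular subgroup inclusions and their conjugates, not merely an abstract isomorphism in each degree. One clean way around this is to phrase everything at the level of the spectral sequences of Lemma~\ref{inducedMaps} (and of the extension $1 \to \Kleinfourgroup \to \Afour \to \Z/3 \to 1$ via Sub-Lemma~\ref{LHS}): since for $\ell=3$ these $E^2$-pages are concentrated where Lemma~\ref{moduleOverRingOfCharacteristic2}-type vanishing applies, the $3$-primary homology of $\Sthree$ and $\Afour$ is computed directly from $\Homol_*(\Z/3;-)$, whose period-$4$ behaviour is visibly natural; the compatibility of $(d^1_{1,q})_{(3)}$ with the shift then follows formally, and the explicit vanishing pattern in even degrees can simply be read off Lemma~\ref{finiteSubgroupsHomology}.
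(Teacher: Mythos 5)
Your proposal is correct and follows essentially the same route as the paper, whose proof of this corollary consists precisely of citing Lemmata~\ref{finiteSubgroupsHomology} and~\ref{inducedMaps}; your extra care about naturality of the period-$4$ shift is harmless here because every $3$-primary group involved is either $0$ or $\Z/3$, and Lemma~\ref{inducedMaps} pins down each inclusion-induced map once bases are fixed for all $q$. One small slip: the $3$-primary part of $\Homol_q(\Sthree;\Z)$ is $\Z/3$ only for $q\equiv 3 \bmod 4$ and vanishes for $q\equiv 0 \bmod 4$, $q>0$ (you appear to have read off the $\Z/3$-coefficient homology instead), but this does not affect the argument, since your subsequent, correct assertion that all these $3$-primary parts vanish in positive even degrees is what is actually used.
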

\begin{proof} Lemmata \ref{finiteSubgroupsHomology} and \ref{inducedMaps}.
\end{proof}

An algorithm for the computation of the rank of the differential matrices $(d^1_{1,1})_{(3)}$ and  $(d^1_{1,3})_{(3)}$ has been given in \cite{RahmThesis}.

\subsection{The 2-primary part} \label{2primary}
As by lemma~\ref{pointwiseStabilizers}, there are only edges with finite cyclic stabilisers, we see that the $d^1_{1,q}$ differential is zero for $q > 0$ even.
Now for $q$ odd, we want to compute its 2-primary part.
Any group monomorphism from edge stabilisers of type $\Z/2$ to vertex stabilisers of type $\Z/2$ or $\Sthree$ induces the only possible isomorphism  $\Z/2 \to \Z/2$ on homology.
\\
By lemma~\ref{inducedMaps}, for $q=1$ the monomorphisms $\Z/2 \to \Afour$ induce zero maps. So, consider the case  $q>1$ odd. By lemma~\ref{geometricRigiditytheorem}, when we establish a matrix for $(d^1_{1,q})_{(2)}$, its block associated to a vertex orbit of stabiliser type $\Afour$ in the sense of subsection~\ref{the differentials}, has exactly one non-zero column.

\begin{Lem} \label{D2blocks}
Let $q$ be odd. Let $v$ be a vertex representative of stabiliser type $\Kleinfourgroup$.
 Then the block associated to it in a matrix for the $(d^1_{1,q})_{(2)}$ differential, has exactly three non-zero columns.
 There is a basis for $\left(\Homol_q(\Kleinfourgroup; \thinspace \Z) \right)_{(2)}$ such that these block columns are
 $(1,0,\ldots,0)^{\rm t}$, $(0,\ldots,0,1)^{\rm t}$ and $(1,\ldots,1)^{\rm t}$ (with $\rm t$ the transpose).
 The latter are linearly independent if and only if $q \geq 3$.
\end{Lem}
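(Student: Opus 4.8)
The plan is to combine the geometry of lemma~\ref{geometricRigiditytheorem} and its corollary with the explicit homological data of lemmata~\ref{finiteSubgroupsHomology} and~\ref{inducedMaps}, and then perform a short linear-algebra computation over $\Z/2$. First I would record that, by lemma~\ref{finiteSubgroupsHomology}, $\left(\Homol_q(\Kleinfourgroup;\Z)\right)_{(2)} \cong (\Z/2)^{(q+3)/2}$ for $q$ odd, so the target of the block is a free $\F_2$-module of rank $\tfrac{q+3}{2}$; in particular the block lives in a matrix over $\F_2$ and I shall work throughout with $\F_2$-coefficients, which is harmless since on $2$-primary parts the relevant homology is killed by $2$. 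By lemma~\ref{geometricRigiditytheorem} there are exactly three orbits of $\Z/2$--stabilised edges adjacent to $v$, and by the corollary following lemma~\ref{geometricRigiditytheorem} their stabilisers are precisely the three order-$2$ subgroups $H_1,H_2,H_3$ of $\Gamma_v\cong\Kleinfourgroup$. Hence the block has exactly three (possibly) non-zero columns, one for each $H_i$, each column being the image of $\Homol_q(H_i;\F_2)\cong\F_2$ under the map on $\Homol_q(-;\F_2)$ induced by the inclusion $H_i\hookrightarrow\Kleinfourgroup$ (up to the conjugation appearing in subsection~\ref{the differentials}, which is by inner automorphisms here since $\Kleinfourgroup$ is abelian, hence acts trivially on its homology).

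Next I would make these three columns explicit. The key input is sub-lemma~\ref{inclusionsIntoD2}, which says each inclusion $\Z/2\to\Kleinfourgroup$ induces an injection on $\F_2$-homology in every degree; so each of the three columns is non-zero, establishing the first assertion. To identify the columns as vectors, I would use the ring structure of $\Homol^*(\Kleinfourgroup;\F_2)\cong\F_2[x,y]$ (equivalently the coalgebra structure in homology): writing $H_1=\langle x\rangle$-dual, $H_2=\langle y\rangle$-dual and $H_3$ the diagonal, the restriction maps to the three subgroups $\Z/2$ are, on $\Homol^q$, respectively "set $y=0$", "set $x=0$", and "set $x=y$". Dualising, in the basis of $\Homol_q(\Kleinfourgroup;\F_2)$ dual to the monomial basis $x^q,x^{q-1}y,\dots,y^q$ of $\Homol^q$, the three columns become $(1,0,\dots,0)^{\mathrm t}$, $(0,\dots,0,1)^{\mathrm t}$ and the column of binomial coefficients $\binom{q}{k}\bmod 2$, which after a base change fixing the first and last coordinates can be taken to be $(1,\dots,1)^{\mathrm t}$. (Alternatively one can simply cite the $E_\infty$-page computation in sub-lemma~\ref{inclusionsIntoD2} together with naturality of the Lyndon/Hochschild/Serre spectral sequence with respect to the two subgroup inclusions, plus the low-degree terms from the appendix, to pin down the third column; this avoids invoking the cup-product structure explicitly.)

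Finally I would treat the linear independence claim. With the three columns $u_1=(1,0,\dots,0)^{\mathrm t}$, $u_2=(0,\dots,0,1)^{\mathrm t}$, $u_3=(1,1,\dots,1)^{\mathrm t}$ in $(\F_2)^{(q+3)/2}$, independence over $\F_2$ fails exactly when $u_3=u_1+u_2$, i.e. when the ambient dimension is $2$, which happens precisely for $q=1$ (where $(q+3)/2=2$); for every odd $q\geq 3$ the dimension is at least $3$, the vector $u_3$ has a $1$ in some middle coordinate where both $u_1$ and $u_2$ vanish, and so $u_1,u_2,u_3$ are linearly independent. This gives the last assertion, matching the known fact (lemma~\ref{inducedMaps}) that for $q=1$ the $\Z/2\to\Afour$ map is zero while the $\Z/2\to\Kleinfourgroup$ maps are not, so the dependency is genuine and not an artifact.

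The main obstacle I expect is the precise identification of the third column — i.e. verifying that the restriction of $\Homol_q(\Kleinfourgroup;\F_2)$ to the diagonal $\Z/2$ is the "all-ones" functional up to a base change preserving the first and last coordinates. Everything else (the count of three columns, their non-vanishing, and the final $\F_2$-rank computation) is immediate from results already in the excerpt; this one point requires either the explicit cohomology ring of $\Kleinfourgroup$ or a careful naturality argument with the spectral sequences of sub-lemma~\ref{inclusionsIntoD2}.
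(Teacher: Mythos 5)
Your overall skeleton matches the paper's: lemma~\ref{geometricRigiditytheorem} and its corollary give exactly three edge orbits at $v$ whose stabilisers are the three order-$2$ subgroups of $\Gamma_v\cong\Kleinfourgroup$, sub-lemma~\ref{inclusionsIntoD2} (or lemma~\ref{inducedMaps}) gives non-vanishing of each column, and the independence criterion is elementary linear algebra over $\F_2$. The genuine gap is in the step you yourself flag as the main obstacle: your explicit identification of the three columns is carried out in the wrong module. You correctly note at the outset that the target of the block is $\left(\Homol_q(\Kleinfourgroup;\Z)\right)_{(2)}\cong(\Z/2)^{(q+3)/2}$, but the monomial basis $x^q,x^{q-1}y,\dots,y^q$ you then dualise lives in $\Homol^q(\Kleinfourgroup;\F_2)$, so your column vectors have length $q+1$, not $\tfrac{q+3}{2}$; for every odd $q\geq 3$ these dimensions differ, so the passage to $\F_2$-coefficients is not ``harmless'' for pinning down the vectors. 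The lemma asserts a specific basis of the $2$-primary part of the \emph{integral} homology, and that is what feeds into the rank computations of $(d^1_{1,q})_{(2)}$ elsewhere in the paper. To repair your argument you would have to track how $\Homol_q(\Kleinfourgroup;\Z)_{(2)}$ sits inside $\Homol_q(\Kleinfourgroup;\F_2)$ and check that the three image vectors retain the claimed shape (in particular that the diagonal one is not the sum of the other two) after restricting to the smaller space; this is exactly the content that the paper obtains instead by citing the explicit integral chain map computation of Schwermer--Vogtmann, which is the one external input you cannot route around with the cohomology ring alone.

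Two smaller points. First, dualising the restriction $\F_2[x,y]\to\F_2[z]$, $x,y\mapsto z$, sends every monomial $x^{q-k}y^k$ to $z^q$, so the diagonal column in the monomial-dual basis is already the all-ones vector, not the vector of binomial coefficients $\binom{q}{k}\bmod 2$ (the latter would arise from the multiplication map, not the diagonal inclusion); your subsequent base-change hedge absorbs this, but only after one checks the vector is outside the span of the first two, which for the binomial vector would itself need a Lucas-type argument. Second, your independence computation happens to give the same answer ($q\geq 3$) in either ambient dimension, since $(q+3)/2\geq 3$ and $q+1\geq 3$ coincide for odd $q$; this is why the error is easy to miss, but it does not make the identification of the columns in $\left(\Homol_q(\Kleinfourgroup;\Z)\right)_{(2)}$ correct as written.
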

\begin{proof} By lemma~\ref{geometricRigiditytheorem},
 there are exactly three $\Gamma$-representatives of non-trivially stabilised edges adjacent to $v$.
Furthermore, the stabilisers of these three edges are precisely the three order-$2$-subgroups of the stabiliser of $v$. 
We apply the chain map computation of \cite{SchwermerVogtmann}, to each of these three subgroup inclusions,
 and obtain the claimed three block columns.
The length of these block columns is the $\Z/2$-rank of $\Homol_q(\Kleinfourgroup ; \thinspace \Z)$,
 namely $\frac{q+3}{2}$, so we easily see that these block columns are linearly independent if and only if $q \geq 3$.
\end{proof}

\begin{prop}
Let $q \geq 3$ odd. Then ${\rm rank} (d^1_{1,q})_{(2)} = {\rm rank} (d^1_{1,3})_{(2)}.$ 
\end{prop}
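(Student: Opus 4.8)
The plan is to show that the rank of the differential $(d^1_{1,q})_{(2)}$ stabilises at $q=3$ by analysing each vertex block in the matrix for $(d^1_{1,q})_{(2)}$ and observing that, as $q$ increases through odd values, the blocks behave uniformly. First I would invoke lemma~\ref{geometricRigiditytheorem} to enumerate the vertex stabiliser types that can contribute a nonzero block: for $\ell=2$ these are $\Z/2$, $\Kleinfourgroup$, $\Sthree$ and $\Afour$ (the trivial and $\Z/3$ types contribute nothing). For each of these I would describe the block columns, using lemma~\ref{finiteSubgroupsHomology} for the relevant homology groups and lemma~\ref{inducedMaps} together with the chain-map computation of~\cite{SchwermerVogtmann} for the induced maps. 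For a vertex of type $\Z/2$ or $\Sthree$ the $2$-primary part of $\Homol_q$ is a single copy of $\Z/2$ and the induced map from the adjacent $\Z/2$-stabilised edges is an isomorphism, so that block contributes a fixed rank independently of (odd) $q$. For a vertex of type $\Afour$, lemma~\ref{D2blocks}'s proof strategy shows the block has exactly one nonzero column for all $q\geq 3$ odd; and for a vertex of type $\Kleinfourgroup$, lemma~\ref{D2blocks} itself gives three linearly independent columns for every $q\geq 3$ odd, with the same combinatorial description $(1,0,\ldots,0)^{\rm t}$, $(0,\ldots,0,1)^{\rm t}$, $(1,\ldots,1)^{\rm t}$.

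The key point is then that the \emph{interaction pattern} between blocks — i.e. which edge orbits connect which pairs of vertex orbits, and which homology classes are hit — is governed entirely by the combinatorics of the $2$--torsion subcomplex and the conjugation data of lemma~\ref{geometricRigiditytheorem}, neither of which depends on $q$. So I would set up, for $q\geq 3$ odd, an explicit identification of the target space $\bigoplus_{\sigma}\left(\Homol_q(\Gamma_\sigma;\Z)\right)_{(2)}$ with a direct sum in which the $\Z/2$- and $\Sthree$-vertex summands are $1$-dimensional and the $\Kleinfourgroup$- and $\Afour$-vertex summands come with the distinguished bases above; under this identification the matrix of $(d^1_{1,q})_{(2)}$ has a block-structured form whose nonzero entries are $0$ or $1$ and whose pattern of nonzero entries is literally the same for all $q\geq 3$ odd. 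Hence the ranks agree, and in particular equal $\operatorname{rank}(d^1_{1,3})_{(2)}$.

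The main obstacle will be making precise that the bases chosen for the $\Kleinfourgroup$- and $\Afour$-vertex homology groups are \emph{compatible across all odd $q\geq 3$} with the edge-to-vertex chain maps — that is, that the column vectors produced by the Schwermer/Vogtmann chain-map computation are genuinely the $q$-independent triples $(1,0,\ldots,0)^{\rm t}$, $(0,\ldots,0,1)^{\rm t}$, $(1,\ldots,1)^{\rm t}$ (resp. a single standard basis column for $\Afour$) and not merely of the same rank in isolation. This is exactly what lemma~\ref{D2blocks} and the $\Afour$ discussion preceding it provide, so the remaining work is bookkeeping: checking that the \emph{off-block} columns (from $\Z/2$- and $\Sthree$-vertices, and the $\Z/2\to\Z/2$ or $\Z/2\to\Sthree$ edge maps) are likewise $q$-independent $\{0,1\}$-columns, and that no cancellation or rank jump can occur because all the entries and all the linear-independence relations among columns are insensitive to the value of odd $q\geq 3$. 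I would conclude by noting that the degenerate behaviour at $q=1$ — where $\Z/2\to\Afour$ induces the zero map and the $\Kleinfourgroup$-block columns are dependent — is precisely why the hypothesis $q\geq 3$ is needed, and why $q=3$ is the stabilisation point.
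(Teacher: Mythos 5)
Your proposal is correct and takes essentially the same route as the paper: a block-by-block analysis of the matrix for $(d^1_{1,q})_{(2)}$ using lemma~\ref{geometricRigiditytheorem}, lemma~\ref{inducedMaps} and lemma~\ref{D2blocks}, showing that the $\Z/2$- and $\Sthree$-blocks are literally unchanged, the $\Afour$-blocks contribute a single column whose rank contribution is constant, and the three $\Kleinfourgroup$-columns remain linearly independent for all odd $q\geq 3$. Your additional care about choosing bases compatibly across $q$ is a reasonable sharpening of the paper's terser wording, but it is not a different argument.
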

\begin{proof}
We see from lemma~\ref{inducedMaps} that all inclusions of $\Z/2$ into any vertex stabiliser induce injections on homology in all degrees $q \geq 3$.
As the groups $\Z/2$ and $\Sthree$ have their \mbox{$q$-th} integral homology group $\Z/2$ for all odd $q$, there is just one possibility for induced injections into it;
and hence the matrix block of $(d^1_{1,q})_{(2)}$ associated to vertex stabilisers of these types is the same for all odd $q$.
Now for vertex stabilisers of type $\Afour$, we know from lemma~\ref{geometricRigiditytheorem} that there is just one 2-torsion edge representative stabiliser inclusion into them.
Thus the associated matrix blocks  $(1,0,\ldots,0)^{\rm t}$ only grow in the number of their zeroes when $q$ grows, but this does not change the rank of $(d^1_{1,q})_{(2)}$.
Finally we see from lemma~\ref{D2blocks} that associated to vertex representative stabilisers of type $\Kleinfourgroup$, there are exactly three matrix sub-blocks, which are linearly independent for all $q \geq 3$.
\end{proof}


Denote by $(\tilde{E},\tilde{d})$ the same equivariant spectral sequence, but now with $\Z/2$-coefficients.
\begin{Lem} \label{mod2ranks}
Let $q \geq 3$ odd. Then  the rank of $d^1_{1,q} \otimes \Z/2$  equals the ranks of the 
\mbox{differentials $\tilde{d}^1_{1,q}$} and $\tilde{d}^1_{1,q+1}$.
\end{Lem}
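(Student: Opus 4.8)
The plan is to compare three differentials sitting in adjacent degrees: $d^1_{1,q} \otimes \Z/2$ (the $\Z/2$-reduction of the integral differential), $\tilde d^1_{1,q}$, and $\tilde d^1_{1,q+1}$. The unifying observation is that, by Lemma~\ref{pointwiseStabilizers}, all edge stabilisers are finite cyclic of order $1$, $2$ or $3$, so only the $2$-primary parts of the maps $\Homol_q(\Gamma_e;\Z) \to \Homol_q(\Gamma_v;\Z)$ contribute; and the $2$-primary source groups are all $\Z/2$ by Lemma~\ref{finiteSubgroupsHomology}. Thus each of the three matrices has columns indexed by the $2$-torsion edge orbits, and the content of the lemma is that the three column spans have the same dimension. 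I would organise the comparison vertex-orbit by vertex-orbit, using the block decomposition of the differential matrix already set up in subsection~\ref{the differentials} and exploited in Lemmata~\ref{D2blocks} and the preceding proposition.

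First I would treat the integral side: for a vertex representative of stabiliser type $\Z/2$ or $\Sthree$, the relevant integral homology group in odd degree $q\geq 3$ is $\Z/2$, and the induced map from a $\Z/2$-edge-stabiliser is the identity (Lemma~\ref{inducedMaps}); tensoring with $\Z/2$ leaves this block unchanged. For $\Afour$-type vertices, Lemma~\ref{geometricRigiditytheorem} gives a single $2$-torsion edge orbit, and Lemma~\ref{inducedMaps} makes the map injective for $q\geq 3$, so the integral block is a single nonzero column $(1,0,\dots,0)^{\mathrm t}$ up to basis change, which stays nonzero after $\otimes\,\Z/2$. For $\Kleinfourgroup$-type vertices, Lemma~\ref{D2blocks} identifies the three block columns as $(1,0,\dots,0)^{\mathrm t}$, $(0,\dots,0,1)^{\mathrm t}$, $(1,\dots,1)^{\mathrm t}$ in a suitable basis, and these remain linearly independent modulo $2$ for $q\geq 3$. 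Hence reduction mod $2$ does not change the rank of the $2$-primary part, and the $3$-primary part of $d^1_{1,q}$ dies entirely after $\otimes\,\Z/2$ (since $\Homol_q(\Z/3;\Z)=\Z/3$ has trivial $\Z/2$-reduction and the $3$-torsion columns feed into $\Homol_q$ of $\Z/3$-, $\Sthree$- or $\Afour$-type vertices, whose $2$-primary parts these columns miss). So $\mathrm{rank}\,(d^1_{1,q}\otimes\Z/2) = \mathrm{rank}\,(d^1_{1,q})_{(2)} \bmod 2$, and by the preceding proposition this is independent of odd $q\geq 3$.

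Next I would compare $\tilde d^1_{1,q}$ and $\tilde d^1_{1,q+1}$ to the mod-$2$-reduced integral differential. By Lemma~\ref{finiteSubgroupsHomology} the edge stabilisers $\Z/2$ have $\Homol_j(\Z/2;\Z/2)\cong\Z/2$ in every degree, so again the source of $\tilde d^1_{1,j}$ has one generator per $2$-torsion edge orbit. For vertices of type $\Z/2$ and $\Sthree$, $\Homol_j(\Z/2;\Z/2)$ and $\Homol_j(\Sthree;\Z/2)$ are both $\Z/2$ in all degrees, with the inclusion inducing an isomorphism, so the corresponding blocks of $\tilde d^1_{1,q}$ and $\tilde d^1_{1,q+1}$ coincide with each other and with the integral block tensored with $\Z/2$. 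For $\Kleinfourgroup$-type vertices I would rerun the chain-map computation of \cite{SchwermerVogtmann} with $\Z/2$-coefficients, using Sub-Lemma~\ref{inclusionsIntoD2}: the three order-$2$ subgroups induce three columns in $\Homol_j(\Kleinfourgroup;\Z/2)\cong(\Z/2)^{j+1}$; I would check (this is the computational heart of the argument, running parallel to Lemma~\ref{D2blocks}) that these three columns are linearly independent for all $j\geq 3$, and in particular for $j=q$ and $j=q+1$, matching the rank contribution of the integral block. For $\Afour$-type vertices, Lemma~\ref{geometricRigiditytheorem} again gives one $2$-torsion edge orbit, and Sub-Lemma for inclusions into $\Afour$ (via the $\Kleinfourgroup$ factorisation and Lemma~\ref{LHS}) shows the map $\Homol_j(\Z/2;\Z/2)\to\Homol_j(\Afour;\Z/2)$ is injective for $j\geq 2$, hence the block is one nonzero column in both degrees $q$ and $q+1$. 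Assembling the blocks, $\mathrm{rank}\,\tilde d^1_{1,q} = \mathrm{rank}\,\tilde d^1_{1,q+1} = \mathrm{rank}\,(d^1_{1,q}\otimes\Z/2)$.

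The main obstacle I anticipate is the $\Kleinfourgroup$-block computation with $\Z/2$-coefficients: one must verify that the three columns coming from the three order-$2$ subgroups of $\Kleinfourgroup$, viewed inside $\Homol_j(\Kleinfourgroup;\Z/2)\cong(\Z/2)^{j+1}$, span a $3$-dimensional space for every $j\geq 3$, so that the rank is stably $3$ across the two consecutive degrees $q$ and $q+1$. This is the mod-$2$ analogue of Lemma~\ref{D2blocks}, but it cannot be imported verbatim because the $\Z/2$-homology of $\Kleinfourgroup$ grows linearly rather than being governed by the torsion subgroup alone; I would handle it by explicit computation with the standard free resolution of $\Kleinfourgroup=\Z/2\times\Z/2$ (Künneth plus the periodic $\Z/2$-resolution), identifying each restriction map to $\Homol_j$ of a coordinate $\Z/2$ and checking linear independence of the resulting vectors — the pattern $(1,0,\dots,0)$, $(0,\dots,0,1)$, $(1,\dots,1)$ type behaviour persisting mod $2$ for $j\geq 3$. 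Everything else is bookkeeping across the block decomposition already in place.
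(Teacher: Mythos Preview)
Your proposal is correct and follows the same route as the paper: identify the source modules as one copy of $\Z/2$ per $2$-torsion edge orbit via Lemma~\ref{pointwiseStabilizers} and Lemma~\ref{finiteSubgroupsHomology}, then compare the differential block by block across vertex stabiliser types using Lemma~\ref{inducedMaps} with both $M=\Z$ and $M=\Z/2$. The paper compresses this comparison into the assertion that the three matrices are checked ``entry by entry'' to be identical; your explicit flagging of the $\Kleinfourgroup$ block as the place requiring a genuine mod-$2$ linear-independence verification is well placed, since that is exactly where the paper's shorthand hides the work.
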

\begin{proof}
Lemma~\ref{pointwiseStabilizers} tells us that the edges in our cell complex have cyclic stabilisers, and that only those of type $\Z/2$ can contribute nontrivially to the $\Z/2$--modules $\tilde{E}^1_{1,q}$, $\tilde{E}^1_{1,q+1}$ and $\left(E^1_{1,q}\right)_{(2)} $.
Then we see from lemma~\ref{finiteSubgroupsHomology} and the Universal Coefficient Theorem that 
\mbox{$\tilde{E}^1_{1,q} \cong \tilde{E}^1_{1,q+1} \cong \left(E^1_{1,q}\right)_{(2)} $.}
Consider matrices for the homomorphisms $d^1_{1,q} \otimes \Z/2$, $\tilde{d}^1_{1,q}$ and $\tilde{d}^1_{1,q+1}$. Then applying lemma~\ref{inducedMaps} with both the coefficients $M = \Z$ and $M = \Z/2$, we check entry by entry that these matrices are identical.
\end{proof}

With the following lemma, we further find out that in degrees $q > 2$, the only possible solution to the d\'evissage problem is the trivial solution. An instance of non-trivial d\'evissage at degree $q = 2$ is given in subsection~\ref{results}. 
\begin{lemma} \label{direct sum decomposition}
Let $q > 2$, $\ell = 2$ or $\ell = 3$, and  $\ringO_{-m}$ any imaginary quadratic ring. Then the $\ell$--primary part of the homology $\Homol_q(\PSLO;\Z)$ is the direct sum over the $(E^\infty_{p,q-p})_{(\ell)}$--terms, the index~$p$ running from $0$ to $2$. 
\end{lemma}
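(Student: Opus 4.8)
The plan is to show that the equivariant spectral sequence $E^r_{p,q}$ converging to $\Homol_{p+q}(\PSLO;\Z)$ has, in total degree $p+q = q > 2$, no non-trivial extension problems at the $\ell$--primary level, i.e.\ the associated graded pieces $(E^\infty_{0,q})_{(\ell)}$, $(E^\infty_{1,q-1})_{(\ell)}$ and $(E^\infty_{2,q-2})_{(\ell)}$ already sum directly to $\Homol_q(\PSLO;\Z)_{(\ell)}$. By Lemma~\ref{pointwiseStabilizers}, for $q > 0$ the $E^1$-page is concentrated in the columns $p = 0$ and $p = 1$, \emph{except} for the bottom row $q = 0$, which also has $p = 2$ contributions coming from the $2$-cells (whose stabilisers are trivial, hence with $\Homol_0 = \Z$). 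So in a fixed total degree $n = q > 2$ the only possibly non-zero terms on the $E^1$-page are $E^1_{0,n}$, $E^1_{1,n-1}$ and $E^1_{2,n-2}$; the last of these survives or dies but contributes only in the bottom-row sense. First I would record this column-concentration and deduce that the filtration of $\Homol_n(\PSLO;\Z)$ has at most three steps.

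Next I would argue that, at the $\ell$--primary level for $n > 2$, the relevant extensions split for purely group-theoretic reasons. The $E^\infty_{2,n-2}$ term is a subquotient of $E^1_{2,n-2} = \bigoplus \Homol_{n-2}(\Gamma_\sigma;\Z)$ over $2$-cell representatives $\sigma$; since by Lemma~\ref{pointwiseStabilizers} these stabilisers are trivial, this is zero unless $n = 2$, so for $n > 2$ the bottom row contributes nothing and the filtration has only \emph{two} non-trivial steps: $0 \subseteq F_0 \subseteq F_1 = \Homol_n(\PSLO;\Z)$ with $F_0 = E^\infty_{0,n}$ and $F_1/F_0 = E^\infty_{1,n-1}$. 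Then I would invoke the statement recalled at the end of Section~\ref{SpecSeq} (proven in \cite{RahmThesis}, and compatible with Lemmata~\ref{finiteSubgroupsHomology} and~\ref{direct sum decomposition}) that for $q \geq 3$ the group $\Homol_q(\PSLO;\Z)$ is a direct sum of copies of $\Z/2$ and $\Z/3$; hence its $\ell$--primary part is an elementary abelian $\ell$-group, and any extension $0 \to (E^\infty_{0,n})_{(\ell)} \to \Homol_n(\PSLO;\Z)_{(\ell)} \to (E^\infty_{1,n-1})_{(\ell)} \to 0$ of elementary abelian $\ell$-groups is automatically a direct sum since every subgroup of such a group is a direct summand. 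This gives the claimed decomposition, with the $(E^\infty_{2,n-2})_{(\ell)}$-term vanishing for $n > 2$.

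The only subtlety to handle carefully is the $\ell=2$, $d^2$-differential issue raised in Section~\ref{SpecSeq}: the singular-point stabilisers $\Z^2$ make the spectral sequence not degenerate at $E^2$, so $E^\infty \ne E^2$ in general. However, the non-degeneracy affects only the interaction between the bottom two rows (coming from $\Homol_*(\Z^2;\Z)$, which is non-trivial only in degrees $0,1,2$), whereas for $q > 2$ the $\ell$--primary parts of the terms $E^r_{p,q}$ with $q \geq 1$ come entirely from the \emph{finite} cyclic stabilisers $\Z/2$, $\Z/3$ (Lemma~\ref{pointwiseStabilizers}) and the finite vertex stabilisers, which are unaffected by the free-abelian subtlety; the $d^2$ (and higher) differentials into or out of these cannot change whether the resulting filtration splits, because the outcome is still a subquotient of a direct sum of $\Z/\ell$'s, hence again elementary abelian. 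So I would state precisely that $\Homol_q(\PSLO;\Z)_{(\ell)}$ is elementary abelian $\ell$ for $q > 2$, conclude that every short exact sequence of such groups splits, and read off the direct-sum decomposition over $p \in \{0,1,2\}$. The main obstacle is not any single hard computation but making airtight the claim that the $E^\infty$-terms, after all (possibly non-zero) higher differentials from the non-proper action, still assemble into an elementary abelian group so that the extension-splitting argument applies; once the concentration in two rows (for $q>2$) and the $\Z/2\oplus\Z/3$ structure of $\Homol_q$ are in hand, the splitting is formal.
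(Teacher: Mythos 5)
Your reduction to a two-step filtration in total degree $n>2$ is correct: by lemma~\ref{pointwiseStabilizers} the rows $q\geq 1$ of the $E^1$-page are concentrated in the columns $p=0,1$, the $p=2$ column lives only in the bottom row, and the only possibly non-zero $d^2$ (namely $d^2_{2,0}$) is irrelevant in total degree $>2$. The splitting criterion you then invoke (a short exact sequence whose \emph{middle} term is an elementary abelian $\ell$-group splits) is also correct. The genuine gap is where you obtain the hypothesis that $\Homol_n(\PSLO;\Z)_{(\ell)}$ is elementary abelian: within this paper that statement is the theorem placed immediately \emph{after} lemma~\ref{direct sum decomposition} and is explicitly derived \emph{from} it (``we immediately obtain the following statement from this lemma''). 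Taking it as an input makes your argument circular, and deferring to the external reference does not repair this, since the result there rests on the same spectral-sequence analysis; a proof of the lemma cannot presuppose its own corollary.

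The paper settles the extension problem without that input, by two different mechanisms. For $\ell=3$ it observes from lemma~\ref{finiteSubgroupsHomology} that the rows $q$ even, $q\geq 2$, of the $E^1$-page carry no $3$-torsion; since for $n>2$ one of $n$, $n-1$ is even and $\geq 2$, at most one of $(E^\infty_{0,n})_{(3)}$ and $(E^\infty_{1,n-1})_{(3)}$ is non-zero, so there is no extension problem at all. For $\ell=2$ it compares with the spectral sequence with $\Z/2$-coefficients: lemma~\ref{mod2ranks} yields $\tilde{E}^\infty_{1,q}\cong\tilde{E}^\infty_{1,q+1}\cong\left(E^\infty_{1,q}\right)_{(2)}$ and likewise in column $0$, and an $\mathbb{F}_2$-dimension count via the Universal Coefficient Theorem then forces the trivial solution of the d\'evissage. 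If you wish to keep your strategy, you must first \emph{prove}, not quote, that $\Homol_n(\PSLO;\Z)_{(2)}$ is elementary abelian for $n>2$ --- and the natural route to that is precisely the mod-$2$ comparison of lemma~\ref{mod2ranks}.
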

\begin{proof} 
First observe that the $E^2$-page is concentrated in the first three columns $p \in \{0, 1, 2\}$, because of 
the existence of a 2-dimensional equivariant retract.
\begin{itemize}
\item Let $\ell = 3$. The rows of the $E^1$-page with $q$ even and $q \geq 2$ do not contain any 3-torsion, so neither does the $E^\infty$-page. So the assertion follows knowing from lemma~\ref{pointwiseStabilizers} that the $E^1$-page is concentrated in the first two columns for $q > 0$.
\item Let $\ell = 2$. Lemma \ref{mod2ranks} implies that $\tilde{E}^\infty_{1,q} \cong \tilde{E}^\infty_{1,q+1} \cong \left(E^\infty_{1,q}\right)_{(2)} $ and \\
$\tilde{E}^\infty_{0,q} \cong \tilde{E}^\infty_{0,q+1} \cong \left(E^\infty_{0,q}\right)_{(2)} $. Hence the only possible solution to the d\'evissage problem is the trivial solution, as we have claimed.
\end{itemize}
\end{proof}

Using the well-known fact that the virtual cohomological dimension (abbreviated vcd, see~\cite{Brown} for the definition of this) of the Bianchi groups is 2, we immediately obtain the following statement from this lemma.
\begin{thm}
Let $q > 2$. Then for $\ringO_{-m}$ any imaginary quadratic ring, \\
$\Homol_q(\PSLO;\Z)$ is a direct sum of copies of $\Z/2$ and $\Z/3$.
\end{thm}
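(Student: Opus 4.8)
The plan is to read off this statement directly from the structural results that have just been assembled, so that only a short assembly argument remains. The key input is Lemma~\ref{direct sum decomposition}: for $q > 2$ and $\ell \in \{2,3\}$, the $\ell$--primary part of $\Homol_q(\PSLO;\Z)$ decomposes as the direct sum of the terms $(E^\infty_{p,q-p})_{(\ell)}$ for $p \in \{0,1,2\}$. The second ingredient is that $\ell = 2$ and $\ell = 3$ are the \emph{only} primes that can divide the order of a torsion element of $\PSLO$ (stated in the introduction and following from Lemma~\ref{finiteSubgroups}), so for $q > 2 = \mathrm{vcd}$, the homology $\Homol_q(\PSLO;\Z)$ is entirely torsion and that torsion is concentrated at $\ell = 2$ and $\ell = 3$.

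First I would note that since the Fl\"oge cellular complex provides a $2$--dimensional $\Gamma$--equivariant retract of $\Hy$ (with the singular cusps having free abelian stabilisers of rank $2$), the $E^2$--page, and hence the $E^\infty$--page, is concentrated in the columns $p \in \{0,1,2\}$; and for $q > 0$ the $E^1$--page is concentrated in the columns $p = 0, 1$ by Lemma~\ref{pointwiseStabilizers}, with entries $\Homol_q(\Gamma_\sigma;\Z)$ for $\Gamma_\sigma$ a finite cyclic group of order $1$, $2$ or $3$ on edges, and a finite group from the list of Lemma~\ref{finiteSubgroups} on vertices. By Lemma~\ref{finiteSubgroupsHomology}, all these groups have finite homology in every positive degree, and that homology is a direct sum of copies of $\Z/2$ and $\Z/3$. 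Hence each $E^1_{p,q}$ with $q > 0$ is a finite abelian group which is a sum of copies of $\Z/2$ and $\Z/3$; this property is inherited by every subquotient, so every $E^\infty_{p,q}$ with $q > 0$ has the same shape. Since $q > 2$ forces $q - p > 0$ for $p \in \{0,1,2\}$, every term appearing in the filtration of $\Homol_q(\PSLO;\Z)$ is such a sum.

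Next I would assemble the conclusion: fix $q > 2$. By Lemma~\ref{direct sum decomposition} applied with $\ell = 2$ and with $\ell = 3$, the $2$--primary part of $\Homol_q(\PSLO;\Z)$ is $\bigoplus_{p=0}^{2}(E^\infty_{p,q-p})_{(2)}$ and the $3$--primary part is $\bigoplus_{p=0}^{2}(E^\infty_{p,q-p})_{(3)}$. Because $\Homol_q(\PSLO;\Z)$ is a finitely generated abelian group all of whose torsion is at the primes $2$ and $3$ (the only orders of torsion elements), and because $q$ is strictly above the virtual cohomological dimension so that the free part vanishes, $\Homol_q(\PSLO;\Z)$ equals the direct sum of its $2$--primary and $3$--primary parts. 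Combining with the previous paragraph, each $(E^\infty_{p,q-p})_{(2)}$ is a sum of copies of $\Z/2$ and each $(E^\infty_{p,q-p})_{(3)}$ is a sum of copies of $\Z/3$, so $\Homol_q(\PSLO;\Z)$ is a direct sum of copies of $\Z/2$ and $\Z/3$, as claimed.

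The only genuine subtlety — the place I expect to need the most care — is the passage from the \emph{associated graded} of the filtration (the $E^\infty$--terms) to $\Homol_q(\PSLO;\Z)$ itself: a priori one only knows there is a finite filtration whose successive quotients are the $E^\infty_{p,q-p}$, and an extension of a sum of $\Z/2$'s and $\Z/3$'s by another such sum need not again be elementary abelian. This is precisely what Lemma~\ref{direct sum decomposition} resolves — its proof (via the d\'evissage argument, comparing $\Z$-- and $\Z/2$--coefficient spectral sequences through Lemma~\ref{mod2ranks}) shows the filtration splits in degrees $q > 2$. So with that lemma in hand, the present theorem is an immediate corollary and no further work on the extension problem is needed.
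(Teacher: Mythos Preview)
Your proposal is correct and follows exactly the paper's approach: the theorem is stated immediately after Lemma~\ref{direct sum decomposition} and is deduced from it in one line using that the virtual cohomological dimension is~$2$. You have simply unpacked the details that the paper leaves implicit.

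One small inaccuracy worth tightening: you write that for $q>0$ the vertex stabilisers are ``a finite group from the list of Lemma~\ref{finiteSubgroups}'', but in the non--principal ideal domain cases the singular cusps are $0$--cells with stabiliser $\Z^2$ (as you yourself note a few lines earlier). This does not damage the argument, since $\Homol_q(\Z^2;\Z)=0$ for $q\geq 3$ and is torsion-free for $q=1,2$, so the $\ell$--primary parts of the relevant $E^\infty_{p,q-p}$ are unaffected; but the sentence ``each $E^1_{p,q}$ with $q>0$ is a finite abelian group'' is literally false for $E^1_{0,1}$ and $E^1_{0,2}$ in those cases and should be restricted to $q\geq 3$ (or to the $\ell$--primary part).
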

It is not an obvious guess that no greater torsion appears here, because the Bianchi groups admit finite index subgroups with arbitrarily large primes occurring as orders of elements of their first homology group \cite{Sengun}.

\section{Torsion subcomplex reduction} \label{axisGraphReduction}

Let $Z$ be a $\Gamma$-cell complex, and let $\ell$ be a prime number.
\begin{df}
The \emph{$\ell$-torsion subcomplex} is the subcomplex of $_\Gamma \backslash Z$ consisting of all the cells, the pre-images of which have stabilisers in $\Gamma$ containing elements of order $\ell$. 
\end{df}

We immediately see that for $Z$ the refined cellular complex, $\Gamma = \PSLO$ and $\ell$ one of the two occuring primes $2$ and $3$, this subcomplex is a finite graph, because by lemma~\ref{pointwiseStabilizers}, the cells of dimension greater \mbox{than 1} are trivially stabilised in the refined cellular complex. 

\begin{lemma} \label{coincidence}
Except for the two cases $m = 1$, $\ell = 2$ and $m = 3$, $\ell = 3$, the $\ell$--torsion subcomplexes of the Fl\"oge cellular complex and the refined cellular complex coincide for all the Bianchi groups.
\end{lemma}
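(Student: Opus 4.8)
The plan is to trace, cell by cell, the passage from the refined cell complex to the Fl\"oge complex $X$ recalled in section~\ref{Floege cellular complex}, and to check that it alters the $\ell$--torsion subcomplex only in the two named cases. Recall that $X$ arises from the refined complex in two moves: one first adjoins, in the non-principal-ideal-domain cases, the $\Gamma$--orbits of the non-principal cusps as new ideal vertices, forming $\widetilde X$; and one then applies the equivariant retraction which, restricted to the Bianchi fundamental polyhedron, is the vertical projection away from~$\infty$ onto the facets not touching~$\infty$, extended by the group action. Thus a cell of $\widetilde X$ is fixed by the retraction --- and hence appears unchanged in $X$ --- precisely when its closure in $\Hy \cup \partial\Hy$ misses the $\Gamma$--orbit of~$\infty$; the cells meeting $\Gamma\cdot\infty$ are collapsed, and the adjoined non-principal cusps sit at the hyperbolic extremities with stabiliser $\Z^2$ (section~\ref{Equivariant Euler characteristic}).

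First I would record, using lemma~\ref{pointwiseStabilizers}, that the $\ell$--torsion subcomplex of the refined cell complex is the graph formed by the edges with stabiliser $\Z/\ell$ and their incident vertices, all lying in $\Hy$, and that the adjoined non-principal cusps, with their torsion-free stabiliser $\Z^2$, neither belong to any $\ell$--torsion subcomplex nor create torsion among the edges running into them. Hence the two $\ell$--torsion subcomplexes can differ only because some edge $e$ of the refined complex with stabiliser $\Z/\ell$ has a cusp in its closure and is therefore collapsed by the retraction. The central step is to show that such an $e$ exists exactly for $(m,\ell) \in \{(1,2),(3,3)\}$: if $\gamma \in \Gamma$ has order $\ell$ and fixes $e$ pointwise, it fixes the cusp $c$ in $\overline{e}$ and so lies in the stabiliser $\Gamma_c$, a parabolic subgroup whose torsion comes only from diagonal matrices $\left(\begin{smallmatrix} u & 0\\ 0 & u^{-1}\end{smallmatrix}\right)$ with $u \in \ringO_{-m}^{\times}$; such a matrix is non-trivial of finite order in $\mathrm{PSL}_2(\ringO_{-m})$ only for $m = 1$, where $\left(\begin{smallmatrix} i & 0\\ 0 & -i\end{smallmatrix}\right)$ has order $2$, and for $m = 3$, where $\left(\begin{smallmatrix} \zeta_6 & 0\\ 0 & \zeta_6^{-1}\end{smallmatrix}\right)$ has order $3$. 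This forces $(m,\ell) \in \{(1,2),(3,3)\}$; and in those cases $m$ is a principal ideal domain, so $c \in \Gamma\cdot\infty$ and the collapsed cell is indeed of the type the retraction removes.

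For the converse I would display the discrepancy explicitly: in each exceptional case the diagonal element named above is elliptic of order $\ell$, and by corollary~\ref{corollaryTo1.4} its rotation axis $l$ is the vertical geodesic over~$0$. By lemma~\ref{pointwiseStabilizers} this axis does not cross the interior of any $2$-- or $3$--cell, so $l$ meets the $1$--skeleton in a vertex and, iterating lemma~\ref{twoEdges}, is a union of edges of the refined complex, one of which abuts the cusp~$\infty$; that edge has stabiliser $\Z/\ell$ yet is collapsed by the retraction, so the $\ell$--torsion subcomplex of $X$ is strictly smaller than that of the refined complex. In every other case no edge with stabiliser $\Z/\ell$ touches a cusp, so the retraction fixes each such edge and each of its vertices, nothing new appears, and the two $\ell$--torsion subcomplexes coincide.

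I expect the main obstacle to be the bookkeeping around the cusps in the non-principal-ideal-domain cases: one must check that adjoining the non-principal cusps and re-organising the cells that run into them adds no $\ell$--torsion. This rests on the standard fact that every $m$ with non-trivial class group has $\ringO_{-m}^{\times} = \{\pm1\}$, so that all cusp stabilisers in $\mathrm{PSL}_2(\ringO_{-m})$ are torsion-free parabolic groups isomorphic to $\Z^2$ (see~\cite{ElstrodtGrunewaldMennicke}, and section~\ref{Equivariant Euler characteristic}); thus in those cases not even an edge with stabiliser $\Z/\ell$ can meet a cusp, and the $\ell$--torsion subcomplexes of the refined complex, of $\widetilde X$ and of $X$ all coincide. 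A lesser point to settle is that the refined complex really does contain, in the two exceptional cases, an edge abutting $\infty$ on the axis $l$; this one reads off from the explicit Bianchi fundamental polyhedron, where $\infty$ is an ideal vertex reached by edges and the refinement subdivides only the bounded cells.
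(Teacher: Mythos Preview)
Your argument is correct and follows essentially the same route as the paper: both proofs reduce the question to whether the stabiliser of the cusp~$\infty$ contains $\ell$--torsion, and then observe that such torsion forces a unit $u \in \ringO_{-m}^{\times}\setminus\{\pm1\}$, hence $m\in\{1,3\}$. Your version is more thorough in that you also verify the converse explicitly (the paper leaves this implicit).

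One small correction: the torsion in $\Gamma_c$ does \emph{not} come only from diagonal matrices. For instance $\left(\begin{smallmatrix} i & \lambda\\ 0 & -i\end{smallmatrix}\right)$ is an order--$2$ element of $\Gamma_\infty$ in $\mathrm{PSL}_2(\ringO_{-1})$ for every $\lambda\in\ringO_{-1}$. The correct statement --- and what the paper writes --- is that a torsion element of $\Gamma_\infty$ is upper triangular $\left(\begin{smallmatrix} \mu & \lambda\\ 0 & \mu^{-1}\end{smallmatrix}\right)$ with $\mu$ a unit other than $\pm1$. Your conclusion is unaffected, since the constraint on the diagonal entry is the same.
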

\begin{proof}
The only cells which the Fl\"oge cellular complex can admit outside the refined cellular complex, are the cusps associated to non-trivial ideal classes, of stabiliser type~$\Z^2$. So the first of these $\ell$--torsion subcomplexes is always contained in the second. 
On the other hand, the cells of the Bianchi fundamental polyhedron which are contracted to obtain the Fl\"oge cellular complex, are the 3--cell filling out the interior of the Bianchi fundamental polyhedron and its vertical facets, which all touch the cusp~$\infty$. As these cells are stabilised pointwise, their stabilisers can only contain $\ell$--torsion if the stabiliser of the cusp~$\infty$ contains $\ell$--torsion. The stabiliser of the cusp~$\infty$ consists of matrices of the form \small $\begin{pmatrix} \mu & \lambda \\ 0 & \frac{1}{\mu} \end{pmatrix}$ \normalsize with entries in $\ringO_{-m}$, so the entry $\mu$ must be a unit in $\ringO_{-m}$.
The only imaginary quadratic rings admitting units other than $\pm 1$, are the Gaussian and Eisenstein integers, so for such a matrix to be an $\ell$--torsion element, only the cases $m = 1$, $\ell = 2$ and $m = 3$, $\ell = 3$ occur.
\end{proof}

The cells which the 2-torsion and the 3-torsion subgraphs have in common, are precisely the vertices of stabiliser types $\Sthree$ and $\Afour$.
For example, we find the 2-torsion subgraph drawn in 
dashed lines~(\begin{pspicture}(-0.251,0.4)(0.3,0.6)
        \psset{linecolor=green,linestyle = dashed, linewidth=1.4pt}
        \psline(-0.2,0.5)(0.2,0.5)
\end{pspicture}) and the 3-torsion subgraph drawn in 
dotted lines~(\begin{pspicture}(-0.251,0.4)(0.3,0.6)
        \psset{linecolor=blue,linestyle = dotted, linewidth=2pt} 
        \psline[dotsep = 0.5pt](-0.2,0.5)(0.2,0.5)
\end{pspicture}) 
in the fundamental domain for the Fl\"oge cellular complex in figure \ref{FloegeFundamentalDomains}, where the vertices with matching labels are to be identified.


Knowing the types of the cell stabilisers which can appear, (the finite groups of lemma~\ref{finiteSubgroups} and the cusp stabilisers of type $\Z^2$), we immediately see that the $\ell$--primary part of the $E^1_{p,q}$--terms of the equivariant spectral sequence converging to the integral homology of the Bianchi groups,
$$
\left(E^1_{p,q}\right)_{(\ell)}=\bigoplus_{\sigma\thinspace\in\thinspace _\Gamma\backslash X^p}\left(\Homol_q(\Gamma_\sigma; \thinspace \Z)\right)_{(\ell)},
$$
depends for $ q \geq 1$ only on the stabilisers of cells in the $\ell$-torsion subcomplex.
Similarly for the  $\ell$--primary part of the associated differential $(d^1_{1,q})_{(\ell)}$ : for $ q \geq 1$, all cell inclusions which contribute non-trivially to it, can be found on the $\ell$-torsion subcomplex.


Now we use the geometric rigidity statements of section~\ref{Rigidity of the action on the refined cell complex} to fuse cells in the torsion subcomplexes.
Let $(a,v)$ and $(v',b)$ be adjacent edges in the $\ell$-torsion subcomplex. This means, they are adjacent to a common vertex orbit $\Gamma \cdot v = \Gamma \cdot v'$.
 
\begin{df} \label{edgeFusion}
If there are exactly two edges adjacent to the vertex~$v$ in the $\ell$-torsion subcomplex, then we define an \emph{edge fusion} by replacing the edges $(a,v)$ and $(v',b)$ by the edge $(a,b)$ and forgetting the vertex~$v$.
\end{df}

\begin{df}
We will call a \emph{reduced $\ell$-torsion subcomplex}, a cell complex obtained from the $\ell$-torsion subcomplex of the refined cellular complex by iterating edge fusions as often as this is permitted by definition~\emph{\ref{edgeFusion}}.
\end{df}


\begin{lemma} \label{invarianceUnderReduction}
In all rows $q \geq 1$, the $E^2$-page of the equivariant spectral sequence converging to \\
 $\Homol_{p+q}(\PSLO; \thinspace \Z)$ is invariant under replacing the $\ell$-torsion subcomplex by a reduced $\ell$-torsion subcomplex for the computation of the $\ell$-primary part of the differential $d^1_{1,q}$.
\end{lemma}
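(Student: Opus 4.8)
The plan is to show that an edge fusion as in Definition~\ref{edgeFusion} changes neither the terms $\left(E^1_{p,q}\right)_{(\ell)}$ nor the rank of the differential $(d^1_{1,q})_{(\ell)}$, and hence leaves the $E^2$-page in rows $q \geq 1$ untouched. Since a reduced $\ell$-torsion subcomplex is obtained by finitely many edge fusions, it suffices to treat a single fusion. So let $v$ be a vertex of the $\ell$-torsion subcomplex with exactly two adjacent edges $(a,v)$ and $(v',b)$ there, and let $v$ be fused away, replacing these two edges by $(a,b)$.

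First I would record the effect on the $E^1$-terms. By Lemma~\ref{pointwiseStabilizers}, the relevant part of the spectral sequence lives in columns $p=0,1$ for $q \geq 1$, so we only look at the chain map
$$
\bigoplus_{\sigma \in {}_\Gamma\backslash X^0} \left(\Homol_q(\Gamma_\sigma;\Z)\right)_{(\ell)}
\xleftarrow{\ (d^1_{1,q})_{(\ell)}\ }
\bigoplus_{\sigma \in {}_\Gamma\backslash X^1} \left(\Homol_q(\Gamma_\sigma;\Z)\right)_{(\ell)}.
$$
Removing the vertex orbit $\Gamma\cdot v$ from the vertex generators and replacing the two edge generators $(a,v),(v',b)$ by the single edge generator $(a,b)$ visibly drops one generator on each side. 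The key point, which uses Lemma~\ref{geometricRigiditytheorem} together with the accompanying corollary, is that $\Gamma_v$ is constrained: having exactly two adjacent edges with $\ell$-torsion stabiliser in the quotient forces $\Gamma_v$ to be cyclic of order $\ell$ (the cyclic-stabiliser rows of the table), and in that case the two edges adjacent to $v$ have the same stabiliser as $v$. Thus on homology the two inclusions $\Gamma_{(a,v)} \hookrightarrow \Gamma_v$ and $\Gamma_{(v',b)} \hookrightarrow \Gamma_v$ are isomorphisms, and via Lemma~\ref{finiteSubgroupsHomology} the $(\ell)$-primary homology of $\Gamma_v$ is a single cyclic group $\Z/\ell$ equal to that of each incident edge.

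Next I would compare the two differential matrices. Order the generators so that the block of $(d^1_{1,q})_{(\ell)}$ involving the generators $\Gamma\cdot v$, $(a,v)$, $(v',b)$ looks, after the identifications above, like a $2\times 2$ sub-block with the row indexed by $v$ carrying $\pm 1$ entries coming from the origin/end inclusions (each an isomorphism $\Z/\ell \to \Z/\ell$, hence $\pm 1$), together with whatever entries $(a,v)$ and $(v',b)$ contribute to the rows indexed by $\Gamma\cdot a$ and $\Gamma\cdot b$. A single row operation (adding the $v$-row suitably) and then deletion of the $v$-row and the corresponding column realises exactly the matrix of $(d^1_{1,q})_{(\ell)}$ for the fused complex, with the new generator $(a,b)$ carrying the sum of the contributions of $(a,v)$ and $(v',b)$ to rows $\Gamma\cdot a,\Gamma\cdot b$: this is precisely the composition of cell stabiliser inclusions $\Gamma_{(a,b)} \hookrightarrow \Gamma_a$ (resp.\ $\Gamma_b$) described in subsection~\ref{the differentials}. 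Elementary row/column operations and deletion of a unit pivot together with its row and column do not change the rank of a matrix, nor the isomorphism type of its cokernel; so $\left(E^2_{0,q}\right)_{(\ell)}$ and $\left(E^2_{1,q}\right)_{(\ell)}$ are unchanged, and $\left(E^2_{p,q}\right)_{(\ell)}=0$ for $p\geq 2$, $q\geq 1$ stays true trivially. Iterating over all fusions gives the claim. I expect the main obstacle to be the bookkeeping in this last step: one must check that the pivot entry is genuinely a unit (this is where one needs the cyclic-case part of Lemma~\ref{geometricRigiditytheorem}, so that both incident inclusions are isomorphisms rather than zero maps) and that the ``fused'' edge really inherits the composed inclusion rather than something else — both follow from the explicit chain-level description of $d^1$ in \cite{Brown}*{VII.8} used already in subsection~\ref{the differentials}, but they need to be stated carefully.
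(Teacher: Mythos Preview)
Your overall strategy---show that a single edge fusion drops one $\Z/\ell$-summand on each side of $(d^1_{1,q})_{(\ell)}$ and simultaneously drops its rank by one---is exactly the paper's approach. The matrix manipulation at the end (delete a unit pivot together with its row and column) is also the right picture.

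There is, however, a genuine error in your identification of $\Gamma_v$. You write that ``having exactly two adjacent edges with $\ell$-torsion stabiliser in the quotient forces $\Gamma_v$ to be cyclic of order $\ell$''. This is false: Lemma~\ref{geometricRigiditytheorem} says that $\mathbf{n}=2$ occurs not only for $\Gamma_v\cong\Z/\ell$, but also for $\Gamma_v\cong\Sthree$ when $\ell=2$ and for $\Gamma_v\cong\Afour$ when $\ell=3$. In those non-cyclic cases the edge-stabiliser inclusions $\Z/\ell\hookrightarrow\Gamma_v$ are \emph{not} group isomorphisms, so your justification that the pivot is a unit (``the two inclusions \ldots\ are isomorphisms'') collapses.

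The repair is straightforward and is what the paper does: observe from Lemma~\ref{finiteSubgroupsHomology} that for odd $q$ one has $\bigl(\Homol_q(\Sthree;\Z)\bigr)_{(2)}\cong\Z/2$ and $\bigl(\Homol_q(\Afour;\Z)\bigr)_{(3)}\cong\Z/3$, so the $\ell$-primary homology of $\Gamma_v$ is still a single copy of $\Z/\ell$; and then invoke Lemma~\ref{inducedMaps}, which says precisely that the inclusions $\Z/2\hookrightarrow\Sthree$ and $\Z/3\hookrightarrow\Afour$ induce \emph{injections} on homology---hence isomorphisms onto that single $\Z/\ell$. With this in place your unit-pivot argument goes through unchanged. (For even $q>0$ all the relevant $\ell$-primary groups vanish and there is nothing to prove; you should say this explicitly.)
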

\begin{proof}
A vertex representative $v$ which is removed by an edge fusion must have exactly two orbits of edges of stabiliser type $\Z/ \ell$ adjacent to it. Lemma~\ref{geometricRigiditytheorem} tells us that then, $\Gamma_v$ is isomorphic to $\Z/2$ or $\Sthree$ in the case $\ell = 2$, and to $\Z/3$ or $\Afour$ in the case $\ell = 3$.
Now we see from definition~\ref{edgeFusion} and lemma~\ref{finiteSubgroupsHomology} that every edge fusion decreases each by $1$ the $\Z/ \ell$--ranks of the modules $E^1_{1,q}$ and $E^1_{0,q}$ for odd $q$, because there is a unique isomorphism type of the $\ell$-primary part of the homology of the stabilisers for vertices with two adjacent edges in the  $\ell$-torsion subcomplex:
\begin{itemize}
\item For the 2-primary part, $\Homol_q(\Sthree; \Z)_{(2)} \cong \Z/2 \cong \Homol_q(\Z/2; \Z)_{(2)}$, for odd $q$,
\item for the 3-primary part, $\Homol_q(\Afour; \Z)_{(3)} \cong \Z/3 \cong \Homol_q(\Z/3; \Z)_{(3)}$, for odd $q$,
\end{itemize}
and the above $\ell$-primary parts are all zero for $q>0$ even. 

Let $q$ be odd. We will show that any edge fusion also decreases by $1$ the $\Z/ \ell$--rank of  $(d^1_{1,q})_{(\ell)}$.
 Then we can conclude that the $E^2$-page is preserved under each edge fusion.
\\                                                                            
Let $\Graph '$ be the graph obtained by an edge fusion from an $\ell$-torsion subgraph $\Graph$.
We will show that passing from $\Graph '$ to $\Graph$ increases the rank of $(d^1_{1,q})_{(\ell)}$ by $1$.
Denote by $(a,b)$ the fusioned edge. 
There is a column associated to it in $(d^1_{1,q})_{(\ell)}$ for $\Graph '$ of the shape \scriptsize
$$\begin{array}{l|c}
& (a,b) \\
\hline & \\
a & -1  \\
b & 1 
\end{array} $$ \normalsize
in a suitable choice of bases for the $\ell$-primary part of the homology groups of the stabilisers. 
The remaining entries in this column are zeroes.
\\
In the graph $\Graph$, we have an additional vertex~$v$; 
and we replace our edge by the two edges $(a,v)$ and $(v',b)$, where $v'$ is on the same orbit as $v$. 
By what we have seen in the beginning of this proof, $\Homol_q(\Gamma_v; \Z)_{(\ell)} \cong \Z/ \ell$.
Furthermore, lemma~\ref{inducedMaps} tells us that the inclusions of the stabilisers of $(a,v)$ and $(v',b)$ into $\Gamma_v$ induce injections on homology.
Hence passing to $\Graph$, we replace the above matrix column by two columns of the shape \scriptsize
$$\begin{array}{l|cc}
& (a,v) & (v',b) \\
\hline && \\
v & 1 & -1 \\
a & -1 & 0 \\
b & 0 & 1 
\end{array} $$ \normalsize
(again in a suitable choice of bases, and the rest of these columns are zeroes). 
As the vertex~$v$ has exactly two edges adjacent to it in the $\ell$-torsion subgraph, 
the remaining entries in the inserted row associated to the vertex~$v$ are zeroes.
We further observe that the sum of the two inserted columns equals the replaced column (after concatenating a zero entry in the row of $v$). So the differential $(d^1_{1,q})_{(\ell)}$ for $\Graph$ has the same rank as the matrix 
$$\begin{array}{l|cc}
& (a,v) &  (a,v)+(v',b) {\rm \medspace and \medspace remaining \medspace columns}  \\
\hline && \\
\medspace \thinspace v & 1 & 0  \\
\begin{array}{l} a \\ {\rm other \medspace rows} \end{array} & 
\begin{array}{c} -1 \\ 0 \end{array} & 
(d^1_{1,q})_{(\ell)} \medspace {\rm for } \medspace \Graph ' \\
\end{array} $$
Hence the rank of the $\ell$-primary part of the differential $d^1_{1,q}$ has increased exactly by~1.
\end{proof}

The geometrical meaning of a reduced torsion subgraph is the following.
\begin{rem}
From the proof of lemma~\ref{geometricRigiditytheorem}, we see that for $\PSLO$ and the refined cell complex, any pair of fusioned edges has pre-images that lie on the same rotation axis. On the other hand, the quotient of any axis for rotations of order $\ell$  is a  chain of fusionable adjacent edges in the $\ell$--torsion subgraph.
Hence a reduced $\ell$-torsion subgraph contains one edge for every $\PSLO$--representative of axes for rotations of order $\ell$.
\end{rem}

\subsection{Classifying the reduced torsion subcomplexes} ${}$
\\
Given an $\ell$-torsion subgraph for $\PSLO$, the only difference that can occur between two of its reductions,  is the following.
If there is a loop in the graph, then this loop will become a single edge with identical origin and end vertex. But this vertex can be chosen arbitrarily from the vertices which are originally on the loop.
However, the topology of the reduced graph does not depend on this choice of vertex.
So as a topological space, it is well defined to speak of \emph{the} reduced $\ell$-torsion subgraph.
The following key lemma, together with lemma~\ref{direct sum decomposition} and the fact from lemma~\ref{pointwiseStabilizers} that the 2-cells are trivially stabilised, directly implies theorem~\ref{HomologicalInvarianceUnderHomeomorphisms}.
\begin{lemma} \label{E2invarianceUnderHomeomorphisms}
The $\ell$--primary part of the terms $E^2_{p,q}$ of the equivariant spectral sequence converging to  $\Homol_{p+q}(\PSLO; \thinspace \Z)$ depends in all rows $q \geq 1$ only on the homeomorphism type of the $\ell$--torsion subcomplex.
\end{lemma}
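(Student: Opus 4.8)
The plan is to reduce the claim to Lemma~\ref{invarianceUnderReduction} together with a topological classification of reduced torsion subcomplexes. First I would recall that by Lemma~\ref{invarianceUnderReduction}, for each row $q \geq 1$ the $\ell$--primary part of $E^2_{p,q}$ is unchanged if we compute the differential $d^1_{1,q}$ using a reduced $\ell$--torsion subcomplex in place of the full $\ell$--torsion subcomplex. Hence it suffices to show that the $\ell$--primary parts of the $E^2_{p,q}$--terms, computed from a reduced $\ell$--torsion subcomplex, depend only on the homeomorphism type of that reduced subcomplex, and that the homeomorphism type of the reduced subcomplex is determined by the homeomorphism type of the original $\ell$--torsion subcomplex.

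For the second point I would invoke the discussion just preceding this lemma: a reduction of an $\ell$--torsion subgraph replaces every maximal chain of fusionable edges by a single edge, collapsing each loop to a one-edge loop; the only freedom is the choice of the surviving vertex on a loop, which does not affect the topology. So ``the'' reduced $\ell$--torsion subgraph is a well-defined topological space, and two $\ell$--torsion subgraphs that are homeomorphic have homeomorphic reductions (a homeomorphism of graphs carries maximal fusionable chains to maximal fusionable chains, hence descends to the reductions). For the first point, I would observe that a reduced $\ell$--torsion subgraph $\Graph$ is a finite graph all of whose vertices have valence $\geq 3$ in $\Graph$ (valence-$2$ vertices having been fused away), so by Lemma~\ref{geometricRigiditytheorem} a vertex $v$ of $\Graph$ has stabiliser type $\Kleinfourgroup$ or $\Afour$ when $\ell = 2$ (valence $3$ resp.\ $1$\,\dots\ here one must be slightly careful, since $\Afour$ gives valence $1$, a leaf, which is allowed), and $\Sthree$ or $\Afour$ when $\ell = 3$. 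In each row $q \geq 1$, Lemmata~\ref{finiteSubgroupsHomology} and~\ref{inducedMaps} then show that the $\ell$--primary part of the homology of each vertex stabiliser and of each edge stabiliser is a fixed group ($\Z/\ell$ for cyclic and $\Sthree$ stabilisers, the appropriate power of $\Z/\ell$ for $\Kleinfourgroup$, $\Z/3$ for $\Afour$ when $\ell=3$, etc.), and the induced maps are always the ``canonical'' ones dictated by Lemmata~\ref{D2blocks} and~\ref{geometricRigiditytheorem} (the three standard columns $(1,0,\dots,0)^{\rm t}$, $(0,\dots,0,1)^{\rm t}$, $(1,\dots,1)^{\rm t}$ at a $\Kleinfourgroup$--vertex, a single nonzero column at an $\Afour$--vertex, an isomorphism at $\Z/\ell$-- and $\Sthree$--vertices). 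Consequently the matrix for $(d^1_{1,q})_{(\ell)}$ — and hence its rank, and hence $E^2_{0,q}$ and $E^2_{1,q}$ on the $\ell$--primary part — is, up to change of basis, completely determined by the incidence data of $\Graph$, i.e.\ by which edges meet which vertices and of which stabiliser type each vertex is. Finally I would note that the stabiliser type of a vertex of $\Graph$ is itself determined by the local topology: by Lemma~\ref{geometricRigiditytheorem} it is read off from the number of adjacent $\Z/\ell$--edges, which is the valence in $\Graph$. Thus the incidence data, and therefore $(E^2_{p,q})_{(\ell)}$ for all $q \geq 1$, depends only on the homeomorphism type of $\Graph$, completing the proof.

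The main obstacle I anticipate is the bookkeeping at $\Afour$--vertices when $\ell = 2$: such a vertex contributes a single nonzero column to $(d^1_{1,q})_{(2)}$ whose length grows with $q$ (cf.\ the proof of the proposition before Lemma~\ref{mod2ranks}), and one must check that this growth-in-zeroes does not change the rank and, more importantly, that an $\Afour$--vertex appears in $\Graph$ as a leaf (valence $1$) rather than as an ordinary valence-$2$ vertex that would have been fused — this is exactly the content of Lemma~\ref{geometricRigiditytheorem} giving ${\bf n} = 1$ for $\Afour$ at $\ell = 2$, so it is legitimate, but it is the point where the argument is least formulaic. A secondary subtlety is to make precise that a homeomorphism between two $\ell$--torsion subgraphs respects the decomposition into maximal fusionable chains; this is routine for graphs but should be stated, since edge fusion is defined combinatorially rather than topologically.
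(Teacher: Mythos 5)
Your proposal follows essentially the same route as the paper (reduce via Lemma~\ref{invarianceUnderReduction}, show the reduction is determined by the homeomorphism type, then recover the $E^2$-terms from the reduced graph and its stabiliser data), but there are two concrete gaps. The first is the claim that ``the stabiliser type of a vertex of $\Graph$ is itself determined by the local topology \dots\ read off from \dots\ the valence in $\Graph$.'' This fails exactly for the connected components that are isolated loops: there the single surviving vertex has valence~$2$ (both ends of the loop edge attach to it), and Lemma~\ref{geometricRigiditytheorem} gives ${\bf n}=2$ for \emph{two distinct} stabiliser types, namely $\Z/2$ or $\Sthree$ when $\ell=2$, and $\Z/3$ or $\Afour$ when $\ell=3$. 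So the stabiliser type is genuinely \emph{not} determined by the homeomorphism type on loop components. The paper's proof treats isolated loops as a separate case and observes that the ambiguity is harmless, because the $\ell$--primary part of the homology of the two candidate stabilisers coincides ($\Z/\ell$ in every odd degree) and the inclusions of $\Z/\ell$ into either induce injections by Lemma~\ref{inducedMaps}. You state the needed homological coincidence in passing, but you never apply it to the loop case; as written, your final ``valence determines stabiliser type'' step is false and needs to be replaced by this argument. (Relatedly, your list of possible vertex types for $\ell=3$ omits $\Z/3$, which is precisely one of the loop-vertex types.)

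The second gap is that you do not address the exceptional cases $m=1$, $\ell=2$ and $m=3$, $\ell=3$. The spectral sequence is built on the Fl\"oge complex, while the rigidity statements (Lemmata~\ref{twoEdges}, \ref{geometricRigiditytheorem}, etc.)\ live on the refined cell complex; Lemma~\ref{coincidence} identifies the two $\ell$--torsion subcomplexes except in those two cases, where the extra units of the Gaussian and Eisenstein integers put $\ell$--torsion into the stabiliser of the cusp~$\infty$. The paper disposes of these cases by showing (sub-lemma~\ref{exceptions}) that their $\ell$--torsion subgraphs are not homeomorphic to that of any other Bianchi group, so the lemma is vacuous for them. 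Your argument silently assumes the two subcomplexes agree, so it needs this preliminary exclusion before the reduction machinery can be invoked. Both gaps are patchable along the paper's lines, but each is a step your proof cannot currently do without.
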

To prove this, we need several sub-lemmata. First, we discard the exceptional cases coming from the additional units in the rings of Gaussian and Eisenstein integers.
\begin{sublem} \label{exceptions}
 The homeomorphism types of the $\ell$--torsion subgraphs for $m = 1$, $\ell = 2$ and $m = 3$, $\ell = 3$ are unique among the $\ell$--torsion subgraphs of the Bianchi groups. 
\end{sublem}
\begin{proof}
 By lemma \ref{coincidence}, for each of the occurring primes $\ell = 2$ and $\ell = 3$, there is only one case where the $\ell$--torsion subgraph of the Fl\"oge cellular complex does not coincide with the $\ell$--torsion subgraph of the refined cellular complex.
In this case, the second subgraph is not closed, as it has edges reaching out to the cusp~$\infty$.
So it cannot be homeomorphic to the $\ell$--torsion subgraph~$\Graph$ for any other~$m$, because cells are fixed pointwise and hence all vertices of edges of $\Graph$ not reaching out to cusps are contained in $\Graph$.
\end{proof}

\begin{sublem} \label{directSumDecomposition}
The matrix for the $\ell$-primary part of $d^1_{1,q}$ can be decomposed as a direct sum of the blocks associated to the connected components of the $\ell$-torsion subgraph.
\end{sublem}
\begin{proof}
As there is no adjacency between different connected components, all entries off these blocks are zero.
\end{proof}

\begin{sublem} \label{key lemma}
 The reduced $\ell$--torsion subgraph $\Graph$ for $\PSLO$, together with the associated stabiliser types --- up to a choice which does not influence the $E^2$-page --- is determined by the homeomorphism type of $\Graph$.
\end{sublem}
\begin{proof}
By sub-lemma~\ref{directSumDecomposition}, we only need to check this on each connected component of $\Graph$.
\begin{itemize}
\item Consider a connected component of $\Graph$ which is an isolated loop.
 Such a loop consists of one edge with its two endpoints identified in $\Graph$.
 The stabilisers of the pre-images of the endpoints are of type $\Z/2$ or $\Sthree$ in the case $\ell = 2$,
 and of type $\Z/3$ or $\Afour$ in the case $\ell = 3$.
 As stated in the proof of lemma~\ref{invarianceUnderReduction},
 the $\ell$--primary part of the homology groups is the same for both possible stabiliser types,
 and by lemma~\ref{inducedMaps} the inclusions of $\Z/\ell$ into them induce always injections,
 so we do not need to know which is precisely this stabiliser type
 to reobtain the original contribution to the $E^2$-page.
\item Consider a connected component of $\Graph$ which is not an isolated loop. On such a connected component,
 the reduction has left only vertices with one or three edges adjacent to them:
 we have eliminated all vertices with two adjacent edges by edge fusions;
 and vertices with no adjacent edges cannot be in the $\ell$-torsion subgraph
 due to lemma~\ref{geometricRigiditytheorem}.
By the latter lemma, there is a unique stabiliser type of vertices with one adjacent edge in the $\ell$--torsion subgraph. Furthermore, the group $\Kleinfourgroup$ is the unique stabiliser type of vertices with three adjacent edges in the 2-torsion subgraph; and in the 3-torsion subgraph, there is no vertex with three adjacent edges at all.
We can recognise vertices with one, respectively three adjacent edges as end points respectively bifurcation points in our connected component of $\Graph$, considered as a topological space $Y$. The end points and bifurcation points are preserved by homeomorphisms. So, our connected component of $\Graph$ can be reconstructed from the homeomorphism type of $Y$, as well as the associated stabiliser types.
\end{itemize}
\end{proof}

\begin{sublem} \label{E2 terms}
 The $\ell$--primary part of the terms $E^2_{p,q}$, in all cases $q \geq 1$, is determined by the reduced $\ell$--torsion subgraph and the associated stabiliser types.
\end{sublem}
\begin{proof}
 The blocks in the $\ell$--primary part of the terms $d^2_{p,q}$ differential matrices are, in all cases $q \geq 1$, determined completely by the involved stabiliser types, as we see from the discussion in sections \ref{3primary} and~\ref{2primary}, lemma~\ref{D2blocks}  and lemma~\ref{inducedMaps}.
\end{proof}

\begin{proof}[Proof of lemma {\rm \ref{E2invarianceUnderHomeomorphisms}}.]
By sub-lemma \ref{exceptions}, we need only consider the cases where we can identify the $\ell$--torsion subgraphs of the Fl\"oge cellular complex and the refined cellular complex using lemma~\ref{coincidence}.
By lemma~\ref{invarianceUnderReduction}, we can pass from the $\ell$-torsion subgraph to a reduced $\ell$--torsion subgraph.
Now we apply sub-lemma~\ref{key lemma} and sub-lemma~\ref{E2 terms}.
\end{proof}

\section{Group homology computations}
\begin{figure} 
  \caption{ Fundamental domains for $\PSLO$ in the Fl\"oge cellular complex, in the non-Euclidean principal ideal domain cases. The vertices with matching labels are identified by $\PSLO$, and then the 2-torsion subcomplex is obtained from the dashed edges and the 3-torsion subcomplex from the dotted edges.}
\label{FloegeFundamentalDomains}
  \centering 
  \subfloat[][\mbox{$m=19$}]{\includegraphics[width=26mm]{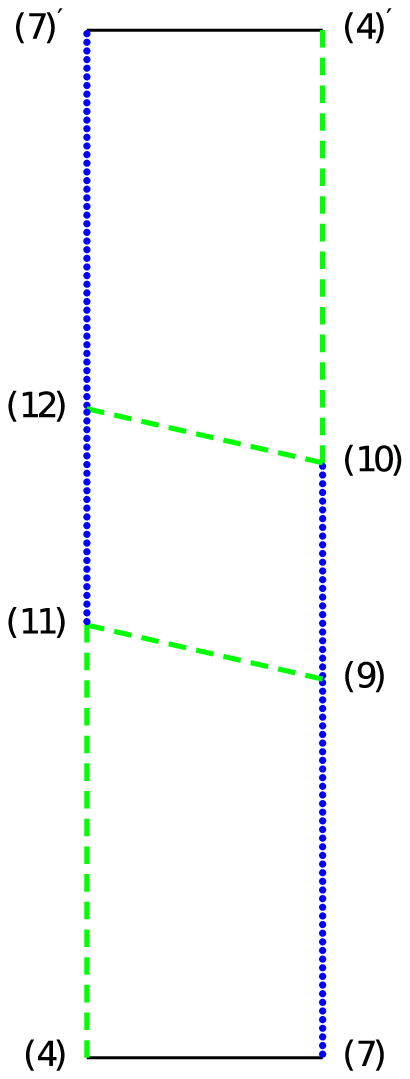}}
\qquad
  \subfloat[][\mbox{$m=43$}]{\includegraphics[width=27mm]{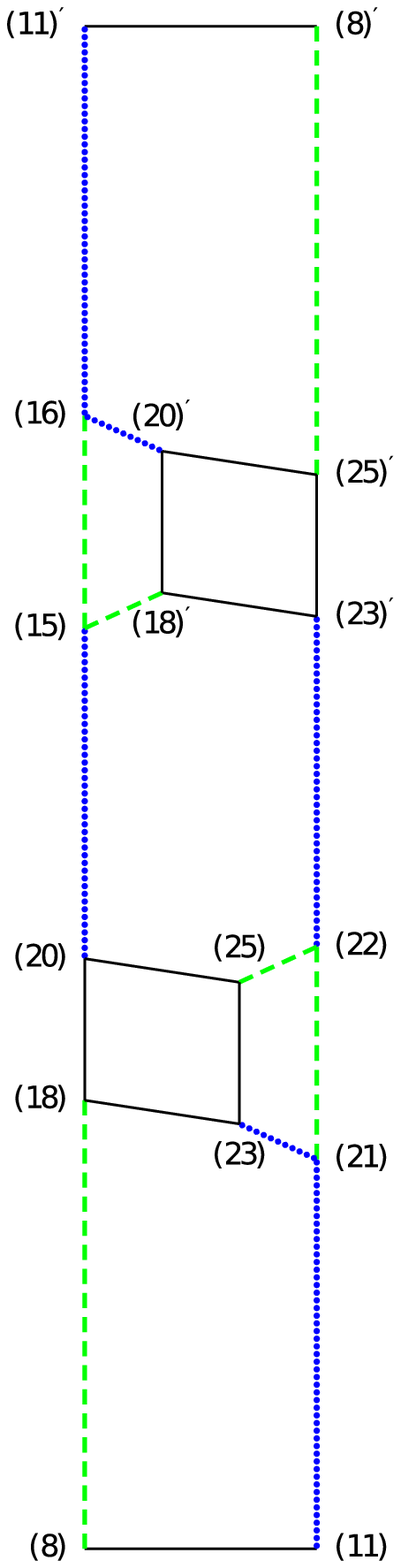}}
\qquad
  \subfloat[][\mbox{$m=67$}]{\includegraphics[width=32mm]{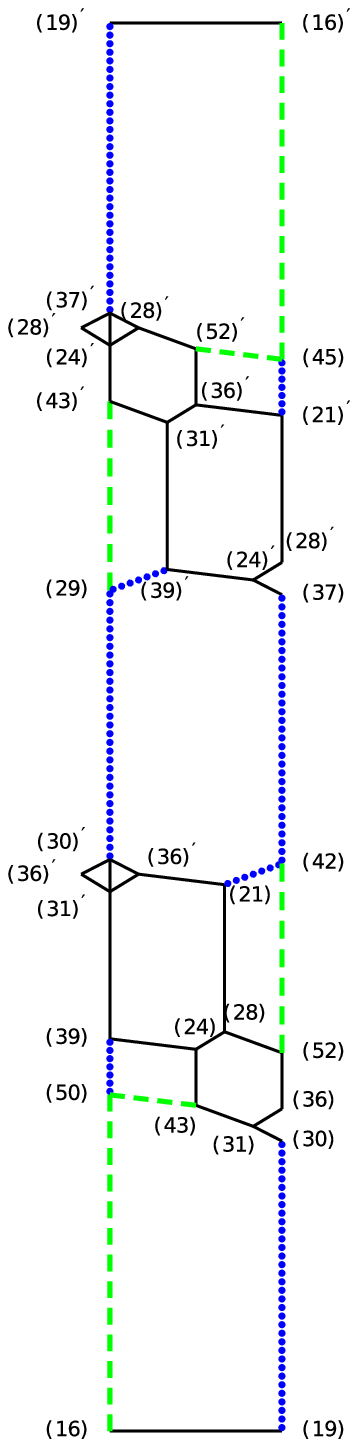}}  
\qquad              
  \subfloat[][\mbox{$m=163$}]{\includegraphics[width=35mm]{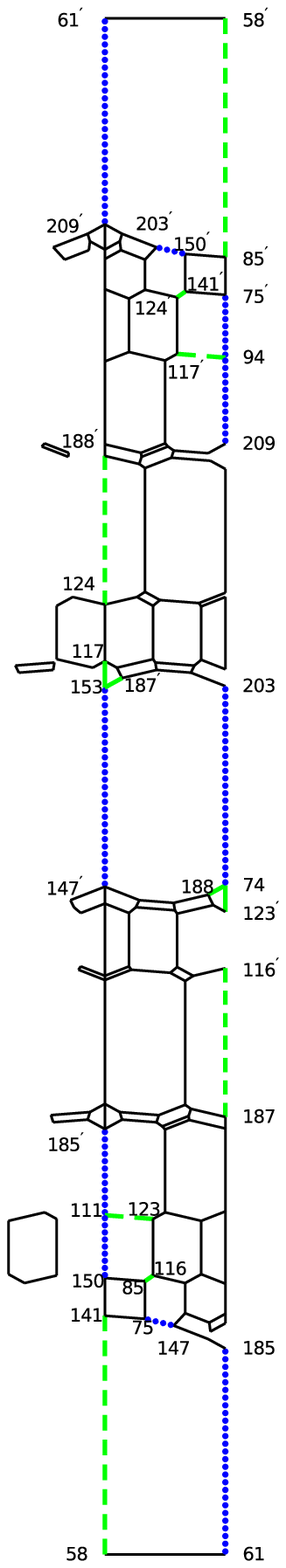}}
\end{figure}

We now give the intermediate results in the group homology computation, in the case $m=67$. Our fundamental domain for the Fl\"oge cellular complex (which coincides with Mendoza's spine in the principal ideal domain cases) is drawn in figure~\ref{FloegeFundamentalDomains}(C). We denote by $(k)$ the vertex number $k$ in the output files of the program \cite{BianchiGP}, and by $\Gamma_{(k)}$ its stabiliser. We do the same for the edges, which we denote by their endpoints.
We will write $(k)'$ for the other vertices on the same $\Gamma$-orbit as $(k)$.
We will use the following notations:
\begin{alignat*}4
& A := \pm \scriptsize
\left(  \begin{array}{*{2}{c}}
- \omega
 & 
16
 \\ 
1
 & 
1
+ \omega
\end{array} \right) , &\qquad &
 \normalsize B:= \pm \scriptsize
\left(  \begin{array}{*{2}{c}}
- \omega
 & 
8
 \\ 
2
 & 
1
+ \omega
\end{array} \right) , 
\\
& C := \pm \scriptsize
\left(  \begin{array}{*{2}{c}}
5
- \omega
 & 
10
+2 \omega
 \\ 
2
+ \omega
 & 
-6
+ \omega
\end{array} \right) \normalsize, 
&\qquad &
D := \pm \scriptsize
\left(  \begin{array}{*{2}{c}}
13
 & 
10
+10 \omega
 \\ 
 \omega
 & 
-13
\end{array} \right) , 
\\
& \normalsize F:= \pm \scriptsize 
\left(  \begin{array}{*{2}{c}}
- \omega
 & 
4
 \\ 
4
 & 
1
+ \omega
\end{array} \right) ,
&\qquad &
 \normalsize G:= \pm \scriptsize
\left(  \begin{array}{*{2}{c}}
-2
- \omega
 & 
4
- \omega
 \\ 
3
 & 
2
+ \omega
\end{array} \right) , \\
& \normalsize H:= \pm \scriptsize
\left(  \begin{array}{*{2}{c}}
-1
- \omega
 & 
15
- \omega
 \\ 
1
 & 
1
+ \omega
\end{array} \right) ,
&\qquad &
\normalsize J := \pm \scriptsize
\left(  \begin{array}{*{2}{c}}
 & 
1
 \\ 
-1
 & 
\end{array} \right) , 
\\ &
 \normalsize L:= \pm \scriptsize
\left(  \begin{array}{*{2}{c}}
-1
- \omega
 & 
7
- \omega
 \\ 
2
 & 
2
+ \omega
\end{array} \right) , &\qquad &
\normalsize N:= \pm \scriptsize
\left(  \begin{array}{*{2}{c}}
9
- \omega
 & 
14
+6 \omega
 \\ 
2
+ \omega
 & 
-10
+ \omega
\end{array} \right),
\\ &
  \normalsize S := \pm \scriptsize
\left(  \begin{array}{*{2}{c}}
 & 
-1
 \\ 
1
 & 
1
\end{array} \right).
\end{alignat*} \normalsize
 We observe the following stabilisers of the vertex representatives.
$$ \small
\begin{array}{llll}
 \Gamma_{(36)} = \Gamma_{(24)} = & \Gamma_{(31)} = \Gamma_{(28)}  & = \{ 1 \}
\\
 \Gamma_{(43)} = & \langle JF  \thinspace | \thinspace (JF)^2 = 1 \rangle & \cong {\mathbb{Z} /2} 
\\
 \Gamma_{(16)} = &
\langle J  \thinspace | \thinspace J^2 = 1 \rangle 
& \cong {\mathbb{Z} /2} \\
 \Gamma_{(52)} = 
& \langle G  \thinspace | \thinspace G^2 = 1 \rangle
& \cong {\mathbb{Z} /2} \\
 \Gamma_{(42)} = 
&\langle L,\thinspace G  \thinspace | \thinspace L^3 = G^2 = (LG)^3 = 1 \rangle
& \cong {\Afour} \\
 \Gamma_{(29)} = 
&\langle B, \thinspace N \thinspace | \thinspace B^3 = N^3 = (BN)^2 = 1 \rangle
& \cong {\Afour} \\
 \Gamma_{(45)} = 
&\langle D , \thinspace H \thinspace | \thinspace D^2 = H^2 = (DH)^3 = 1 \rangle
& \cong {\Sthree} \\
 \Gamma_{(50)} = 
&\langle J, \thinspace F  \thinspace | \thinspace J^2 = F^3 = (JF)^2 = 1 \rangle 
& \cong {\Sthree} \\
 \Gamma_{(37)} = 
&\langle L  \thinspace | \thinspace L^3 = 1 \rangle
& \cong {\mathbb{Z} /3} \\
 \Gamma_{(21)} = 
&\langle C  \thinspace | \thinspace C^3 = 1 \rangle
 &\cong {\mathbb{Z} /3} \\
 \Gamma_{(39)} = 
&\langle F  \thinspace | \thinspace F^3 = 1 \rangle
& \cong {\mathbb{Z} /3} \\
 \Gamma_{(19)} = \Gamma_{(30)} =
&\langle S  \thinspace | \thinspace S^3 = 1 \rangle 
& \cong {\mathbb{Z} /3} .
\end{array}$$
\normalsize
The following edge representatives have stabiliser type $\Z/2$:
$$ \small
\begin{array}{llll}
\Gamma_{(42),(52)} = 
\langle G  \thinspace | \thinspace G^2 = 1 \rangle
\\
\Gamma_{(18),(45)} = 
\langle H  \thinspace | \thinspace H^2 = 1 \rangle
 \\
\Gamma_{(45),(51)} = 
\langle D  \thinspace | \thinspace D^2 = 1 \rangle
 \\
\Gamma_{(43),(50)} = 
\langle JF  \thinspace | \thinspace (JF)^2 = 1 \rangle
 \\
\Gamma_{(16),(50)} = 
\langle J  \thinspace | \thinspace J^2 = 1 \rangle 
 \\
\Gamma_{(29),(44)} = 
\langle BN  \thinspace | \thinspace (BN)^2 = 1 \rangle .
\end{array}
$$ \normalsize
In order to compare these with the vertex representative stabilisers,  we note that there are respectively two elements of $\Gamma$ sending the vertex $(18)$ to $(16)$, $(44)$ to $(43)$ and $(51)$ to $(52)$. 

The following edge representatives have stabiliser type $\Z/3$:
$$ \small
\begin{array}{llll}
\Gamma_{(19),(30)} =  
\langle S  \thinspace | \thinspace S^3 = 1 \rangle 
\\
\Gamma_{(21),(42)} = 
\langle C  \thinspace | \thinspace C^3 = 1 \rangle
\\
\Gamma_{(37),(42)} = 
\langle L  \thinspace | \thinspace L^3 = 1 \rangle
\\
\Gamma_{(26),(45)} = 
\langle DH \thinspace | \thinspace (DH)^3 = 1 \rangle
\\
\Gamma_{(20),(23)} = 
\langle A  \thinspace | \thinspace A^3 = 1 \rangle
\\
\Gamma_{(29),(54)} = 
\langle N  \thinspace | \thinspace N^3 = 1 \rangle
\\
\Gamma_{(29),(47)} = 
\langle B  \thinspace | \thinspace B^3 = 1 \rangle
\\
\Gamma_{(39),(50)} = 
\langle F  \thinspace | \thinspace F^3 = 1 \rangle
\end{array}
$$ 
In order to determine the homomorphisms into the vertex stabilisers, we give a matrix for each of the following vertex identifications. The whole coset of matrices performing this vertex identification is obtained by multiplying the edge stabiliser from the right onto this matrix.
$$
\begin{array}{{l|cccccc}}
\txt{The matrix} & \txt{sends vertex number} & \txt{to vertex number} \\ \\
\hline \\ \pm
\scriptsize
\left(  \begin{array}{*{2}{c}}
7
 & 
5
+5 \omega
 \\ 
 \omega
 & 
-12
\end{array} \right) \normalsize
& (26) 
& (21)
\\ \\ \pm
\scriptsize
\left(  \begin{array}{*{2}{c}}
1
 & 
 \omega
 \\ 
 & 
1
\end{array} \right) \normalsize
& (20) 
& (19) \\ \\ \pm \scriptsize
\left(  \begin{array}{*{2}{c}}
8
-2 \omega
 & 
30
+7 \omega
 \\ 
3
+ \omega
 & 
-12
+2 \omega
\end{array} \right)\normalsize
&  (23) 
&(37)
\\ \\ \pm
\scriptsize
\left(  \begin{array}{*{2}{c}}
5
 & 
3
+3 \omega
 \\ 
 \omega
 & 
-10
\end{array} \right) \normalsize
& (54) 
&(39)
\\ \\ \pm \scriptsize
\left(  \begin{array}{*{2}{c}}
-3
- \omega
 & 
6
- \omega
 \\ 
5
 & 
1
+2 \omega
\end{array} \right) \normalsize
& (47) 
& (30).
\end{array}
$$
The remaining seventeen edge orbits have trivial stabiliser. There are fifteen orbits of 2-cells.
The above cardinalities sum up to the equivariant Euler characteristic
$$\chi_\Gamma(X) =
4 +\frac{3}{2} +\frac{2}{12} +\frac{2}{6} +\frac{5}{3}
-17 -\frac{6}{2} -\frac{8}{3}
+15 = 0,$$
whence there is a check of our calculations in view of proposition \ref{Euler_characteristic_vanishes}.

\subsubsection*{The $d^1$--differentials in the equivariant spectral sequence}$\text{ }$ 
\\
On the 3-primary part, for $q \equiv 3 \mod 4$, \quad $(d^1_{1,q})_{(3)}$: $(\Z/3)^9 \gets (\Z/3)^8$ can be expressed by the matrix \begin{center} \scriptsize 
$\begin{pmatrix} -1& 0& 0& 0& -1& 0& 0& 0\\ 1& 0& 0& 0& 0& 0& 1& 0\\ 0& 2& 1& 0& 0& 0& 0& 0\\ 0& -1& 0& -1& 0& 0& 0& 0\\ 0& 0& -2& 0& 1& 0& 0& 0\\ 0& 0& 0& 1& 0& 0& 0& 0\\ 0& 0& 0& 0& 0& -1& -2& 0\\ 0& 0& 0& 0& 0& 1& 0& -1\\ 0& 0& 0& 0& 0& 0& 0& 1 \end{pmatrix}$ \normalsize of rank 8, 
\end{center}
and for $q \equiv 1 \mod 4$, we obtain a matrix for 
$(d^1_{1,q})_{(3)}$: $(\Z/3)^7 \gets (\Z/3)^8$ by omitting the sixth and the last row of the above matrix.
The resulting rank is 7.
So in both cases the highest possible rank occurs.

\bigskip

For $q > 1$ odd, the 2-primary part $(d^1_{1,q})_{(2)}$ has preimage $(\Z/2)^6$ and full rank 6, and can be expressed in terms of the matrix
\begin{center} $A :=$ \scriptsize $\begin{pmatrix} -1& 0& 0& 0& 0& 0\\ 1& 0& 1& 0& 0& 0\\ 0& 1& -1& 0& 0& 0\\ 0& 0& 0& -1& 0& 1\\ 0& -1& 0& 0& -1& 0\\ 0& 0& 0& 0& 0& -1\\  0& 0& 0& 1& 1& 0 \end{pmatrix}$ \normalsize \end{center}
in the following way. For the differential $(d^1_{1,6k+5})_{(2)}$, with target space $(\Z/2)^{7+2k+2}$, this is the matrix $A$ with $2k+2$ zero rows to be inserted.
For the differentials $(d^1_{1,6k+3})_{(2)}$  and $(d^1_{1,6k+7})_{(2)}$, with target space  $(\Z/2)^{7+2k}$, we obtain their matrix from $A$ by inserting $2k$ zero rows.

Finally, a matrix for $(d^1_{1,1})_{(2)}$: $(\Z/2)^5 \gets (\Z/2)^6$ is obtained by omitting the first and sixth rows from~$A$. This gives us a matrix of rank 5.
We deduce the $E^2$-page of proposition~\ref{non-EuclideanPrincipalE2page}.

\subsection{The non-Euclidean principal ideal domain cases}  \label{results}

We now give the results in the cases $m = 19,$ $43,$ $67$ and $163$, which are the non-Euclidean principal ideal domain cases. The Euclidean principal ideal domain cases are already known from \cite{SchwermerVogtmann}.
We observe that in these four cases, the torsion in the integral homology of $ {\rm PSL}_2(\ringO_{-m})$ is of the same isomorphism type.
This comes from the fact that their 2-torsion and 3-torsion subgraphs are homeomorphic (see figure~\ref{FloegeFundamentalDomains}).
Lemma~\ref{E2invarianceUnderHomeomorphisms} then explains this isomorphism.

\begin{prop} \label{non-EuclideanPrincipalE2page}
For $m \in \{ 19, 43, 67, 163\}$, the $E^2$-page of the equivariant spectral sequence is concentrated in the columns $p=0$, $p=1$ and $p=2$, given as follows. \footnotesize
$$
\begin{array}{{l|ccccc}}
q = 12n + 14 & (\Z/2)^{4n + 6}\\
q = 12n + 13 & (\Z/2)^{4n + 3} & & \Z/3\\
q = 12n + 12 & (\Z/2)^{4n + 4}\\
q = 12n + 11 & (\Z/2)^{4n + 5} \oplus \Z/3 & \\
q = 12n + 10 & (\Z/2)^{4n + 2}\\
q = 12n + 9 & (\Z/2)^{4n + 3} & & \Z/3\\
q = 12n + 8 & (\Z/2)^{4n + 4}\\
q = 12n + 7 & (\Z/2)^{4n + 1} \oplus \Z/3 & \\
q = 12n + 6 & (\Z/2)^{4n +2}\\
q = 12n + 5 & (\Z/2)^{4n + 3} & & \Z/3\\
q = 12n + 4 & (\Z/2)^{4n}\\
q = 12n + 3 & (\Z/2)^{4n + 1} \oplus \Z/3 & \\
q = 2       & (\Z/2)^2 \\
q = 1       &  0 & &  \Z/2 \oplus \Z/3 \\
q = 0       &  \Z & &    \Z^{\beta_1} & &   \Z^{\beta_2},
\\
\\ \hline 
& p = 0 & & p = 1 & & p = 2
\end{array}
$$ \normalsize
where  \scriptsize
$\begin{array}{l|ccccc}
m        &  19 & 43 & 67 & 163 \\
\hline 
\beta_1  &  1   & 2  &  3 & 7   \\
\end{array}$ \normalsize
gives the Betti number $\beta_1$, and $\beta_2 = \beta_1 -1$.
\end{prop}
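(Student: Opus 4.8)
The plan is to split the $E^2$-page into the rows $q\geq 1$, where all the torsion lives, and the bottom row $q=0$, and to reduce the former to the single case $m=67$ already computed in the preceding pages. For $q\geq 1$: by lemma~\ref{pointwiseStabilizers} the $2$- and $3$-cells of the Fl\"oge complex are trivially stabilised, so $E^1_{p,q}=0$ for all $p\geq 2$ and $q\geq 1$; hence in these rows the $E^1$-page, and with it the $E^2$-page, is concentrated in the columns $p=0,1$, with $E^2_{0,q}=\operatorname{coker}d^1_{1,q}$ and $E^2_{1,q}=\ker d^1_{1,q}$, and (the complex being two-dimensional) the only later differential that could reach these columns is the single $d^2\colon E^2_{2,0}\to E^2_{0,1}$. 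By lemma~\ref{finiteSubgroupsHomology} the groups $E^1_{0,q}$ and $E^1_{1,q}$ are, for $q\geq 1$, finite direct sums of copies of $\Z/2$ and $\Z/3$, so $d^1_{1,q}$ splits into its $2$-primary and $3$-primary parts, and by lemma~\ref{E2invarianceUnderHomeomorphisms} the $\ell$-primary part of $E^2_{p,q}$ depends, for $q\geq 1$, only on the homeomorphism type of the $\ell$-torsion subcomplex. For $m\in\{19,43,67,163\}$ the $2$-torsion subcomplexes are pairwise homeomorphic and so are the $3$-torsion subcomplexes --- in each case a single edge, as one sees from figures~\ref{FloegeFundamentalDomains} and~\ref{3torsionsubgraphs} --- so it suffices to compute these rows for $m=67$.

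For $m=67$ one reads the source and target of each $(d^1_{1,q})_{(\ell)}$ off lemma~\ref{finiteSubgroupsHomology} applied to the stabilisers tabulated above (vertex orbits: two of type $\Afour$, two of type $\Sthree$, three of type $\Z/2$, five of type $\Z/3$; edge orbits: six of type $\Z/2$, eight of type $\Z/3$); among the vertex contributions only $\Homol_q(\Afour;\Z)$ grows with $q$, with period-$6$ fine structure, and this is the sole source of unbounded growth. The ranks of the differentials are exactly the ones exhibited above: the $3$-primary part is $4$-periodic in $q$ (corollary of section~\ref{3primary}), vanishes in positive even degrees, and has rank $7$ for $q\equiv 1\bmod 4$ and rank $8$ for $q\equiv 3\bmod 4$; the $2$-primary part vanishes in even degrees, has rank $5$ for $q=1$, and --- by the rank-stability statement of section~\ref{2primary}, together with lemmata~\ref{geometricRigiditytheorem} and~\ref{D2blocks} --- constant rank $6$ for every odd $q\geq 3$. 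Taking kernels and cokernels degree by degree then yields precisely the entries listed in the columns $p=0$ and $p=1$; the pattern is $12$-periodic because $\operatorname{lcm}(4,6)=12$. In particular $E^2_{0,1}=0$ (so the stray $d^2$ above is also zero), $E^2_{1,1}=\Z/2\oplus\Z/3$, and $E^2_{0,2}=(\Z/2)^2$, the two copies coming from $\Homol_2(\Afour;\Z)\cong\Z/2$.

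For the bottom row, $E^1_{\bullet,0}$ is the cellular chain complex of the compact quotient $\Gamma\backslash X$ with trivial coefficients, so $E^2_{p,0}\cong\Homol_p(\Gamma\backslash X;\Z)$; these groups are obtained directly from the integral chain complex produced by~\cite{BianchiGP}. Connectedness gives $E^2_{0,0}=\Z$; one verifies in each of the four cases that $\Homol_1(\Gamma\backslash X;\Z)$ is free of the tabulated rank $\beta_1$, while $\Homol_2(\Gamma\backslash X;\Z)$ is free because it lies in the top dimension. Finally the vanishing of the naive Euler characteristic $\chi(\Gamma\backslash X)=0$ --- proposition~\ref{Euler_characteristic_vanishes} and the remark following it --- forces $1-\beta_1+\beta_2=0$, i.e. $\beta_2=\beta_1-1$.

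The conceptual ingredients --- the rank computations for $d^1$ in sections~\ref{3primary}--\ref{2primary} and the homeomorphism-invariance of lemma~\ref{E2invarianceUnderHomeomorphisms} --- are already in hand, so the main remaining work is the bookkeeping in the middle step: matching the $12$-periodic pattern of kernels and cokernels to the table, where the delicate point is the interplay of the $4$-periodic $3$-primary differential, the linear growth of $\Homol_\ast(\Afour;\Z)$ coming from the two $\Afour$-vertices, and the stable rank~$6$ of the $2$-primary differential. This is routine rather than a genuine obstacle.
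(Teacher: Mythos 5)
Your proposal is correct and follows essentially the same route as the paper: the explicit rank computation of $(d^1_{1,q})_{(2)}$ and $(d^1_{1,q})_{(3)}$ for $m=67$ (ranks $5$, $6$, $7$, $8$ exactly as you state), the transfer to $m\in\{19,43,163\}$ via the homeomorphism of torsion subgraphs and lemma~\ref{E2invarianceUnderHomeomorphisms}, the bottom row as the homology of the compact quotient, and the relation $\beta_2=\beta_1-1$ from the vanishing naive Euler characteristic. The kernel/cokernel bookkeeping you outline matches the tabulated $12$-periodic pattern.
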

The Betti numbers are related by the vanishing of the naive Euler characteristic \cite{Vogtmann}, 
\\
$\beta_0 -\beta_1 +\beta_2 = 0$.

We observe that the $E^2_{0,1}$-term vanishes completely. This term is the target of the only $d^2$-arrow which can for arbitrary $m$ be non-zero, namely $d^2_{2,0}$. Hence our spectral sequence degenerates at the $E^2$-level.
The only ambiguity in the d\'evissage concerns $\Homol_2({\rm PSL}_2(\ringO_{-m}); \Z)$; the above $E^\infty$-page says that its 2-primary part is either $(\Z/2)^3$ or $\Z/4 \oplus \Z/2$. Computing
$$ \dim_{\F_2} \Homol_q({\rm PSL}_2(\ringO_{-m}); \Z/2) = \scriptsize \begin{cases}
4k + 5, & q = 6k+8, \\ 
4k + 3, & q = 6k+7, \\ 
4k + 5, & q = 6k+6, \\ 
4k + 3, & q = 6k+5, \\ 
4k + 1, & q = 6k+4, \\ 
4k + 3, & q = 6k+3, \\ 
\beta_2 +2, & q = 2, \\
\beta_1, & q = 1,
\end{cases} $$ \normalsize
and comparing with the help of the Universal Coefficient Theorem, we can exclude the first possibility.
We obtain proposition \ref{non-Euclidean results}.

\begin{notation} \label{Poincare series}
Let $\ell$ be a prime number. Consider the Poincar\'e series in the dimensions over the field with $\ell$ elements, of the homology with $\Z/\ell$--coefficients of $\PSLO$,
$$P^\ell_m(t) := \sum\limits_{q \thinspace = \thinspace 3}^{\infty} \dim_{\mathbb{F}_\ell} \Homol_q \left(\text{PSL}_2\bigl(\mathcal{O}_{-m}\bigr);\thinspace \Z/\ell \right)\thinspace t^q.$$ 
\end{notation}

\subsection{Gaussian and Eisenstein integers} \label{Gaussian and Eisenstein integers}
In \cite{RahmThesis}, the computation for the cases of the Gaussian integers $\ringO_{-1} = \Z[\sqrt{-1}]$ and the Eisentein integers $\ringO_{-3} = \Z[\omega]$, with 
$\omega = -\frac{1}{2} +\frac{1}{2}\sqrt{-3} = {\rm e}^\frac{2\pi i}{3}$, has been redone by hand, following step by step the description in~\cite{SchwermerVogtmann}. This enables us to clean up some typographical impacts of the publication process (presumably the recomposition) to the results in the editor's version of the latter paper. Some of the implied corrections have already been suggested by Berkove~\cite{BerkoveMod2}.

For the Gaussian integers, the integral homology of ${\rm PSL}_2(\ringO_{-1})$
is a direct sum of copies of $\Z/2$ and $\Z/3$, with the number of copies specified by 
$$ \Homol_q({\rm PSL}_2(\ringO_{-1}); \Z) \cong
\begin{cases}
(\Z/2)^2 \oplus \Z/3, & q = 2, \\
(\Z/2)^2, & q = 1 \\ 
\end{cases}$$ 
and the Poincar\'e series $ P^2_1(t) = \frac{-2t^3(2t^3 - t^2 - 3)}{(t-1)^2 (t^2 + t + 1 ) }$
and \mbox{$ P^3_1(t) = \frac{-t^3(t^2 - t + 2)}{(t-1)(t^2+1)}$.}

For the Eisenstein integers,
$$ \Homol_q({\rm PSL}_2(\ringO_{-3}); \Z) \cong
\begin{cases}
\Z/4 \oplus \Z/2, & q = 2, \\
\Z/3, & q = 1  \\ 
\end{cases} $$
and $ \Homol_q({\rm PSL}_2(\ringO_{-3}); \Z)$ is for $q \geq 3$ a direct sum of copies of $\Z/2$ and $\Z/3$ with the number of copies specified by the Poincar\'e series $ P^2_3(t) = \frac{-t^3(t^3 - 2t^2 + 2t - 3)}{(t-1)^2 (t^2 + t + 1 ) }$
and \mbox{$ P^3_3(t) = \frac{-t^3(t^2 +2)}{(t-1)(t^2 +1)}$.}

\subsection{Computations of the homological torsion} \label{Results for the homological torsion}

\begin{figure} \caption{The results for the Poincar\'e series $ P^2_m(t)$ in 2--torsion}
\label{2torsionsubgraphs} 
\begin{tabular}{c|c|c}
$m$ & \begin{tabular}{c} {reduced} \\ { 2-torsion subgraph} \end{tabular} & $P^2_m(t)$
\\ 
\hline &  & 
 \\
\begin{tabular}{c} 7, 15, 35, 39, 87, 91, 95, \\ 115, 151, 155, 159, 191, 403 \end{tabular} &  \circlegraph & $\frac{-2t^3}{t-1}$ 
\\ &  & \\
46 & \circlegraph \circlegraph &  $2 \left(\frac{-2t^3}{t-1}\right)$ \normalsize
\\ &  & \\
235, 427 &   \circlegraph \circlegraph \circlegraph &  $3 \left( \frac{-2t^3}{t-1} \right)$ \normalsize
\\ & &\\
3, 11, 19, 43, 67, 139, 163  &  \edgegraph &   $\frac{-t^3(t^3 - 2t^2 + 2t - 3)}{(t-1)^2 (t^2 + t + 1 ) }$ \normalsize
\\ & & \\
51, 123, 187, 267 &  \edgegraph \edgegraph &   $2 \left(\frac{-t^3(t^3 - 2t^2 + 2t - 3)}{(t-1)^2 (t^2 + t + 1 ) }\right)$ \normalsize
\\ & & \\
6, 22 & \circlegraph \edgegraph & $\frac{-2t^3}{t-1} +\frac{-t^3(t^3 - 2t^2 + 2t - 3)}{(t-1)^2 (t^2 + t + 1 ) }$ \normalsize
\\ &  & \\
5 , 10, 13, 29, 58 & \graphFive &  $\frac{-t^3(3t -5)}{(t-1)^2}$ \normalsize
\\ &  &\\
37 & \graphFive \circlegraph \circlegraph   & $\frac{-t^3(3t -5)}{(t-1)^2} +2 \left(\frac{-2t^3}{t-1}\right)$ \normalsize
\\ &  &\\
2 &  \graphTwo &   $\frac{-2t^3(t^3 - t^2 - 2)}{(t-1)^2 (t^2 + t + 1 ) }$ \normalsize
\\ &  &\\
34 &  \graphTwo \graphTwo \circlegraph \circlegraph  &  $ 2 \left( \frac{-2t^3(t^3 -t^2 -2)}{(t-1)^2 (t^2 + t + 1 ) }\right) +2\left(\frac{-2t^3}{t-1}\right)$ \normalsize
\\ &  &\\
\end{tabular}
\end{figure}
\normalsize

For the discussion in the rest of this section, we leave the two special cases $m = 1$, $\ell = 2$ and $m = 3$, $\ell = 3$ excluded, already having treated them in subsection~\ref{Gaussian and Eisenstein integers}.

\begin{observation} \label{homeomorphyTypesin3torsion}
Consider the case $\ell = 3$. By lemma~\ref{geometricRigiditytheorem}, there can be no bifurcation point in the 3-torsion subgraph. Hence, every connected component of a reduced 3--torsion subgraph consists of a single edge, 
\begin{itemize}
\item either with two vertices of stabiliser type $\Sthree$, 
\item or with identified end points of stabiliser type $\Z/3$ or $\Afour$, so this connected component is a loop.
\end{itemize}
In the second case, the contribution to the  $E^2$-page does not depend on the occurring stabiliser type,  as we see from lemma~\ref{E2invarianceUnderHomeomorphisms}.
\end{observation}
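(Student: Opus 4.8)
Proof proposal for Observation~\ref{homeomorphyTypesin3torsion}.

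The plan is to read everything off the table in lemma~\ref{geometricRigiditytheorem} together with the mechanics of edge fusion. First I would record which vertices can occur in the $3$--torsion subgraph. Such a vertex~$v$ has a stabiliser containing an element of order~$3$; it is non-singular (a singular vertex has the torsion-free stabiliser~$\Z^2$, hence cannot lie in the subgraph — and the exceptional case $m=3$ is excluded by the standing hypothesis of this section), so lemma~\ref{finiteSubgroups} forces $\Gamma_v$ to be one of $\Z/3$, $\Sthree$, $\Afour$. By lemma~\ref{pointwiseStabilizers} every edge of the $3$--torsion subgraph has stabiliser exactly~$\Z/3$, so the number of edge-orbits of the $3$--torsion subgraph adjacent to~$v$ is precisely the number~$\mathbf{n}$ of lemma~\ref{geometricRigiditytheorem} for $\ell=3$: namely $2$ when $\Gamma_v \cong \Z/3$ or $\Afour$, and $1$ when $\Gamma_v \cong \Sthree$. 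In particular $\mathbf{n}\le 2$ always, so no vertex has three or more adjacent edges; this already gives the assertion that there are no bifurcation points.

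Next I would use that an edge fusion does not change the valency of a surviving vertex: replacing the pair $(a,v),(v',b)$ by $(a,b)$ leaves the set of edges incident to~$a$ (resp.\ to~$b$) unchanged apart from renaming $(a,v)$ to $(a,b)$. Hence every vertex of a reduced $3$--torsion subgraph still has valency $1$ or~$2$, and by definition~\ref{edgeFusion} every valency-$2$ vertex with two \emph{distinct} incident edges has been eliminated. A connected graph all of whose vertices have valency~$1$ is a single segment with two endpoints; if a valency-$2$ vertex survives, it is because its two incident edge-ends belong to one and the same edge, so the component is a single loop based at that vertex. Thus each connected component of the reduced subgraph is a single edge, and matching with the table: the two endpoints of a segment have valency~$1$, hence stabiliser type~$\Sthree$; the base vertex of a loop has valency~$2$ (two edge-ends, one edge), hence stabiliser type $\Z/3$ or $\Afour$. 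Geometrically, as in the remark following lemma~\ref{invarianceUnderReduction}, such a component is the quotient of a single order-$3$ rotation axis, folded into a segment exactly when some vertex stabiliser on it contains an order-$2$ element normalising the rotation subgroup (corollary~\ref{normaliser in the stabiliser}) — which happens for $\Sthree$ but not for $\Z/3$ or $\Afour$.

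Finally, for the last sentence I would invoke lemma~\ref{E2invarianceUnderHomeomorphisms} directly: replacing the base vertex of a loop component by the other admissible type leaves the reduced $3$--torsion subgraph (hence its homeomorphism type) unchanged, so the $E^2$-page is unchanged. Alternatively, as already noted inside the proof of lemma~\ref{invarianceUnderReduction}, one has $\Homol_q(\Z/3;\Z)_{(3)}\cong \Z/3\cong \Homol_q(\Afour;\Z)_{(3)}$ for odd~$q$ while both vanish for $q>0$ even, and by lemma~\ref{inducedMaps} the edge-stabiliser inclusion $\Z/3\hookrightarrow\Gamma_v$ induces injections on homology for either choice of $\Gamma_v$; so the block of $(d^1_{1,q})_{(3)}$ attached to the loop, and with it the contribution to $E^2$, is the same in both cases.

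The only routine-but-slightly-delicate point is the loop bookkeeping in the reduction step: one must check that the iterated edge fusions along an axis whose quotient is a circle terminate at a single loop edge rather than collapsing further, and that the surviving base vertex is genuinely one with $\mathbf{n}=2$. Everything else is a direct reading of the table of lemma~\ref{geometricRigiditytheorem}.
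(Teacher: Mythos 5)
Your argument is correct and follows the same route as the paper, which justifies this observation simply by reading off the $\ell=3$ row of the table in lemma~\ref{geometricRigiditytheorem} (so $\mathbf{n}\le 2$, hence no bifurcation points), noting that edge fusions leave only segments and loops, and citing lemma~\ref{E2invarianceUnderHomeomorphisms} (equivalently, the computation $\Homol_q(\Z/3;\Z)_{(3)}\cong\Homol_q(\Afour;\Z)_{(3)}$ together with lemma~\ref{inducedMaps}) for the independence of the $E^2$-contribution of a loop from its vertex-stabiliser type. Your extra bookkeeping — edge-ends versus edges at a loop vertex, termination of the fusion process at a single loop edge, and the exclusion of singular vertices and of the case $m=3$ — fills in exactly the details the paper leaves implicit, and is accurate.
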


The Poincar\'e series of notation \ref{Poincare series} depends only on the $\ell$-primary part of the $E^2$-page because we have cut off the degrees smaller than or equal to the virtual cohomological dimension of~$\Gamma$, namely~2.

We observe that we can decompose the 3-torsion Poincar\'e series $P^3_m(t)$ as a sum over the series obtained from the connected components of the 3-torsion subgraph, because by sub-lemma~\ref{directSumDecomposition}, there can be no interference between different connected components.

\begin{observation} \label{linearDecomposition}
Hence it suffices to compute the 3-torsion Poincar\'e series $P^{3,1}(t)$ and $P^{3,2}(t)$ associated to the first and the second homeomorphism type appearing in 
\mbox{observation~\ref{homeomorphyTypesin3torsion},} and for any Bianchi group $\text{PSL}_2\bigl(\mathcal{O}_{-m}\bigr)$ count the numbers $n_1$ of connected components of first type, $n_2$ of connected components of second type.
Then the 3-torsion Poincar\'e series associated to this Bianchi group equals
$$P^3_m(t) = n_1 P^{3,1}(t) +n_2 P^{3,2}(t).$$
As $P^{3,1}(t)$ and $P^{3,2}(t)$ are linearly independent, the reduced $3$-torsion subgraph can be easily computed from the $3$-torsion Poincar\'e series.
\end{observation}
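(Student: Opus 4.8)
The plan is to reduce the statement to a fact about single connected components and then reassemble. First I would recall sub-lemma~\ref{directSumDecomposition}: a matrix for the $3$--primary part of the differential $d^1_{1,q}$ splits as a block-diagonal sum indexed by the connected components of the reduced $3$--torsion subgraph. Hence the ranks of these differentials add over connected components, and therefore so does the $3$--primary part of every term $E^2_{p,q}$ with $q\geq 1$. Combining this with lemma~\ref{direct sum decomposition} (trivial d\'evissage in degrees $q>2$, the $2$--cells being trivially stabilised by lemma~\ref{pointwiseStabilizers}, and the vanishing of the relevant higher differentials since for $q\geq 3$ the $E^1$--page is concentrated in columns $p\in\{0,1\}$), together with the universal coefficient theorem, one gets that $\dim_{\F_3}\Homol_q(\PSLO;\Z/3)$ for $q>2$ is a fixed linear functional of the $3$--primary parts of the $E^2$--terms in rows $q$ and $q-1$; consequently $P^3_m(t)$ itself decomposes as a sum of one contribution per connected component of the reduced $3$--torsion subgraph.

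Next I would feed in observation~\ref{homeomorphyTypesin3torsion}: such a connected component is homeomorphic either to a single edge (with two endpoints of stabiliser type $\Sthree$) or to a loop. By lemma~\ref{E2invarianceUnderHomeomorphisms}, the $3$--primary part of the $E^2$--page contributed by a connected component (in all rows $q\geq 1$) depends only on its homeomorphism type; hence so does that component's contribution to $P^3_m(t)$. Calling these two contributions $P^{3,1}(t)$ (the edge) and $P^{3,2}(t)$ (the loop), and letting $n_1,n_2$ be the numbers of connected components of the two respective types, we obtain $P^3_m(t)=n_1\,P^{3,1}(t)+n_2\,P^{3,2}(t)$. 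The two series themselves are computed once and for all from lemmata~\ref{finiteSubgroupsHomology} and~\ref{inducedMaps} and the description of the $d^1$--differentials in subsection~\ref{3primary}; this produces $P^{3,1}(t)=\frac{-t^3(t^2-t+2)}{(t-1)(t^2+1)}$ and $P^{3,2}(t)=\frac{-2t^3}{t-1}$, matching the entries of figure~\ref{3torsionsubgraphs} and, for a single edge, proposition~\ref{non-Euclidean results}.

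For the last assertion I would verify that $P^{3,1}(t)$ and $P^{3,2}(t)$ are linearly independent over $\rationals$. This is immediate from their pole structure: $P^{3,1}$ has a genuine pole at $t=i$ (its numerator $-t^3(t^2-t+2)$ is non-zero there), whereas $P^{3,2}$ is regular at $t=i$; so a vanishing $\rationals$--combination $a P^{3,1}+b P^{3,2}$ forces $a=0$ and then $b=0$. Equivalently, the first two non-zero coefficients already suffice, since $P^{3,1}=2t^3+t^4+\cdots$ while $P^{3,2}=2t^3+2t^4+\cdots$. Therefore the pair $(n_1,n_2)$ of non-negative integers is uniquely determined by $P^3_m(t)$; and since, by observation~\ref{homeomorphyTypesin3torsion}, the reduced $3$--torsion subgraph is as a topological space nothing but a disjoint union of $n_1$ arcs and $n_2$ circles, it is recovered up to homeomorphism from $P^3_m(t)$, as claimed.

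I do not expect a genuine obstacle: the substantive work is already done in sub-lemma~\ref{directSumDecomposition} (additivity over connected components) and lemma~\ref{E2invarianceUnderHomeomorphisms} (homeomorphism invariance), so this observation is essentially their packaging plus the elementary linear-independence check. The only point requiring some care is the bookkeeping between $\Z/3$--homology and the integral $E^2$--page through the universal coefficient theorem --- in particular checking that the period-$4$, even-vanishing behaviour of $(d^1_{1,q})_{(3)}$ recorded in subsection~\ref{3primary} makes $\dim_{\F_3}\Homol_q(\PSLO;\Z/3)$ genuinely additive over connected components in \emph{every} degree $q>2$, including the transitional degree $q=3$ whose $\Z/3$--part also reads off the row $q=2$ of the $E^2$--page.
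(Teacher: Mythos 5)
Your argument is correct and follows essentially the same route as the paper: additivity over connected components via sub-lemma~\ref{directSumDecomposition}, homeomorphism-invariance of each component's contribution via lemma~\ref{E2invarianceUnderHomeomorphisms} combined with observation~\ref{homeomorphyTypesin3torsion}, and then the linear independence of the two basic series. The paper leaves the linear-independence claim and the universal-coefficient bookkeeping implicit; your explicit checks (the pole at $t=i$, resp.\ the coefficients $2t^3+t^4+\cdots$ versus $2t^3+2t^4+\cdots$, and the degree-$3$ transition reading off row $q=2$) only fill in details it takes for granted.
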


For the connected components in the 2-torsion subgraph, a priori infinitely many homeomorphism types may occur. 
But we still have, by \mbox{sub-lemma~\ref{directSumDecomposition}}, a direct sum decomposition of the 2-primary part of the $E^2$-page, for $q$ greater than the virtual cohomological dimension of the Bianchi group.

 The following lemma is useful in order to transform the Poincar\'e series of notation \ref{Poincare series} into fractions of finite polynomials in $t$. The results are given in figures~\ref{2torsionsubgraphs} and~\ref{3torsionsubgraphs}.

\begin{Lem} \label{PoincareSeries}
The equation
$\sum\limits_{k=0}^\infty (a k +b)t^{i k+j} = \frac{t^j (b+t^i (a -b))}{(1-t^i)^2}$ holds for all $a, b, i,j \in \N$.
\end{Lem}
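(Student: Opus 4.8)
The statement to prove is the closed-form evaluation
\[
\sum_{k=0}^\infty (ak+b)\,t^{ik+j} = \frac{t^j\bigl(b + t^i(a-b)\bigr)}{(1-t^i)^2}.
\]
This is an elementary power-series identity, so the plan is simply to reduce it to two standard geometric-type sums and then combine the resulting rational functions.

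\textbf{Approach.} First I would split the sum by linearity into $a\sum_{k\geq 0} k\,t^{ik+j}$ and $b\sum_{k\geq 0} t^{ik+j}$. Both are treated as formal power series in $t$ (equivalently, as convergent series for $|t|<1$, which is the relevant regime for a Poincar\'e series). The second sum factors a $t^j$ out front and becomes the geometric series $\sum_{k\geq0}(t^i)^k = \frac{1}{1-t^i}$, giving $\frac{t^j}{1-t^i}$. For the first sum, set $u=t^i$ and use the well-known identity $\sum_{k\geq0} k\,u^k = \frac{u}{(1-u)^2}$, which one obtains by differentiating the geometric series $\sum_{k\geq0}u^k=\frac{1}{1-u}$ term by term (legitimate for formal power series, or for $|u|<1$). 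Pulling out the $t^j$, this gives $a\cdot\frac{t^{i+j}}{(1-t^i)^2}$.

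\textbf{Combining.} Now I would put the two pieces over the common denominator $(1-t^i)^2$:
\[
a\cdot\frac{t^{i+j}}{(1-t^i)^2} + b\cdot\frac{t^j(1-t^i)}{(1-t^i)^2}
= \frac{t^j\bigl(a t^i + b(1-t^i)\bigr)}{(1-t^i)^2}
= \frac{t^j\bigl(b + t^i(a-b)\bigr)}{(1-t^i)^2},
\]
which is exactly the claimed right-hand side. That completes the argument.

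\textbf{Main obstacle.} There is essentially no obstacle here: the only point requiring a word of justification is the term-by-term differentiation used to evaluate $\sum_{k\geq0}k u^k$, which is valid on the open unit disk (or formally in $\mathbb{Z}[[t]]$), and the observation that the hypothesis $a,b,i,j\in\mathbb{N}$ guarantees the exponents $ik+j$ are genuine nonnegative integers so the manipulations stay within the ring of formal power series. One might also note for completeness that $i\geq 1$ is needed for $1-t^i$ to be a nonzero series with the expected factorization, but since $i\in\mathbb{N}$ with $\mathbb{N}$ meaning positive integers in this paper, that is automatic.
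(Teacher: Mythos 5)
Your proof is correct, but it takes a slightly different route from the paper's. You decompose the series by linearity into $b\sum_k (t^i)^k$ and $a\sum_k k(t^i)^k$, evaluate each via the geometric series and its term-by-term derivative, and then recombine over the common denominator $(1-t^i)^2$. The paper instead verifies the identity directly: it multiplies the left-hand side by $(1-t^i)^2 = 1 - 2t^i + t^{2i}$, distributes this over the sum, shifts the summation indices, and observes that the coefficient of $t^{ik+j}$ for $k\geq 2$ is $(ak+b) - 2\bigl(a(k-1)+b\bigr) + \bigl(a(k-2)+b\bigr) = 0$, so that only the boundary terms $bt^j + (a-b)t^{i+j}$ survive. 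The two arguments are of essentially equal length and difficulty; yours has the advantage of resting on a universally known identity (at the small cost of justifying termwise differentiation, which you do address), while the paper's telescoping computation is entirely self-contained and works verbatim in the ring of formal power series without any appeal to differentiation or convergence. Your closing remarks about the exponents being nonnegative integers and $i\geq 1$ are appropriate and harmless. No gap.
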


\begin{proof}
$$\begin{array}{c}
(1-t^i)^2 \sum\limits_{k=0}^\infty (a k +b)t^{i k+j} \\
= \sum\limits_{k=0}^\infty (a k +b)t^{i k+j}
-2 \sum\limits_{k=0}^\infty (a k +b)t^{i (k+1)+j}
+\sum\limits_{k=0}^\infty (a k +b)t^{i (k+2)+j} \\
=  \sum\limits_{k=0}^\infty (a k +b)t^{i k+j}
-2 \sum\limits_{k=1}^\infty (a (k-1) +b)t^{i k+j }
+\sum\limits_{k=2}^\infty (a (k-2) +b)t^{i k +j} \\
\end{array}$$
Now the multiplicity in this term of $\sum\limits_{k=2}^\infty t^{i k +j} $
is $a k +b -2a k +2a -2b +a k -2a +b = 0$, so the above term equals
$b t^j +(a+b)t^{i+j} -2b t^{j+i} 
= t^j (b -b t^i +a t^i).$
\end{proof}

\section{Results for the special linear groups} \label{SL2}
In order to keep the $\ell$--torsion subcomplex low-dimensional, it is important to divide the arithmetic group by the subgroup generated by all the elements of order $\ell$  which are in the kernel of the action. 
Otherwise, when $\ell$ occurs as the order of an element in the kernel, 
the $\ell$--torsion subcomplex is the whole quotient complex.
For instance, for SL$_2(\ringO_{-m})$, where $\ringO_{-m}$ is a non-Euclidean principal ideal domain, we remark that for each 2--cycle in the quotient complex (which corresponds to a generator of $ \Homol_2({\rm SL}_2(\ringO_{-m}); \rationals) $ ), we have a constant summand $\Z/2$ to its integral homology in all degrees $q>1$.

\begin{proposition} 
The integral homology of ${\rm SL}_2(\ringO_{-m})$, \\ for \mbox{ $m \in \{ 19, 43, 67,163\}$,} is given as
$$ \Homol_q({\rm SL}_2(\ringO_{-m}); \Z) \cong 
\begin{cases}
(\Z/2)^{\beta_2} \oplus \Z/2 \oplus \Z/4 \oplus \Z/3, & q =  6 +4n, \\
(\Z/2)^{\beta_2}, & q = 5 +4n, \\
(\Z/2)^{\beta_2}\oplus \Z/2, & q = 4  +4n, \\
(\Z/2)^{\beta_2} \oplus \Z/2 \oplus \Z/8  \oplus \Z/3, & q = 3 +4n, \\
\Z^{\beta_2}   \oplus (\Z/2)^{\beta_2}\oplus \Z/2 \oplus \Z/4 \oplus \Z/3, & q = 2, \\
\Z^{\beta_2 +1}, & q = 1,  \\ 
\end{cases} $$\normalsize
where the Betti number $\beta_2$ is given in proposition~{\rm \ref{non-EuclideanPrincipalE2page}}.
\end{proposition}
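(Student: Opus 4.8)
The plan is to run the equivariant Leray/Serre spectral sequence of section~\ref{SpecSeq} once more, now for $\Gamma := \SLO$ acting on the same Fl\"oge (equivalently Mendoza) cellular complex $X$. This is legitimate because the centre $\{\pm 1\}$ acts trivially on $\Hy$, so that ${}_{\SLO}\backslash X = {}_{\PSLO}\backslash X$, and for $m \in \{19,43,67,163\}$ (principal ideal domain cases) this quotient is the finite two--dimensional complex used in subsection~\ref{results}, with no singular cusps. By the corollary following lemma~\ref{geometricRigiditytheorem}, the $\SLO$--stabiliser of each cell is the preimage under $\SLO \to \PSLO$ of the corresponding $\PSLO$--stabiliser: every $2$--cell now has stabiliser $\langle -1\rangle \cong \Z/2$; the edges carry stabilisers among $\Z/2$, $\Z/4$, $\Z/6$; and the vertices --- for these $m$ no Klein four--group vertex, hence no $Q_8$, occurs --- among $\Z/2$, $\Z/4$, $\Z/6$, the binary dihedral group $Q_{12}$ of order $12$, and the binary tetrahedral group $\widetilde{\Afour} \cong \mathrm{SL}_2(\F_3)$ of order $24$. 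The first step is to tabulate the integral and $\Z/2$--homology of these finite groups following \cite{AdemMilgram}: the cyclic ones are periodic of period $2$, while $Q_{12}$ and $\mathrm{SL}_2(\F_3)$ are periodic of period $4$ with $\Homol_2 = 0$, $\Homol_{4k+3}(Q_{12};\Z) \cong \Z/12$ and $\Homol_{4k+3}(\mathrm{SL}_2(\F_3);\Z) \cong \Z/24$. The second step is the analogue of lemma~\ref{inducedMaps}: determine the maps induced on homology by the inclusions actually occurring in $X$ --- the central $\Z/2 \hookrightarrow (\text{any stabiliser})$ together with $\Z/4 \hookrightarrow Q_{12}$ and $\Z/6 \hookrightarrow Q_{12}, \mathrm{SL}_2(\F_3)$ --- which is handled by Lyndon/Hochschild/Serre spectral sequences and transfer exactly as in the sub-lemmata of section~\ref{The maps induced on cohomology by finite subgroup inclusions}.

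The $3$--primary part then needs no further work. Since $3$ does not divide $|\langle -1\rangle| = 2$, localising at $3$ the Lyndon/Hochschild/Serre spectral sequence of the central extension $1 \to \Z/2 \to \SLO \to \PSLO \to 1$ collapses onto the bottom row, so that $\Homol_*(\SLO;\Z)_{(3)} \cong \Homol_*(\PSLO;\Z)_{(3)}$; by proposition~\ref{non-Euclidean results} this is one copy of $\Z/3$ in each degree $q \equiv 2,3 \pmod 4$ with $q \geq 3$, together with the $\Z/3$ in degree $2$ --- precisely the $\Z/3$'s claimed. (Equivalently, the $3$--torsion subcomplex is unchanged --- every $\Z/3$ in a $\PSLO$--stabiliser arises from a $\Z/6$ or an $\mathrm{SL}_2(\F_3)$ whose $3$--local homology agrees with that of $\Z/3$ --- so lemma~\ref{E2invarianceUnderHomeomorphisms} applies, and the $E^2$--page of the above Lyndon/Hochschild/Serre spectral sequence, being entirely known, gives an independent check of the $2$--primary answer.)

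The bulk of the argument is the $2$--primary part, where two features set it apart from the $\PSLO$--computation of subsection~\ref{results}. First, torsion subcomplex reduction is unavailable: as observed at the start of section~\ref{SL2}, $-1$ has order $2$ and lies in the kernel of the action, so the $2$--torsion subcomplex is all of ${}_{\Gamma}\backslash X$ and the differentials $d^1_{1,q}$, $d^1_{2,q}$ must be written out over the full quotient complex. Second, the spectral sequence no longer degenerates at $E^2$: the $2$--cells now have stabiliser $\Z/2$, so $E^1_{2,q}$ is a sum of copies of $\Z/2$, one per $2$--cell orbit, for every odd $q$, and the differentials $d^2_{2,q}\colon E^2_{2,q} \to E^2_{0,q+1}$ can be non-zero. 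These differentials are to be computed from the adjacency multiplicities of the corollary to lemma~\ref{geometricRigiditytheorem} --- an $\mathrm{SL}_2(\F_3)$--vertex meets one orbit of $\Z/4$--edges and two orbits of $\Z/6$--edges, a $Q_{12}$--vertex two orbits of $\Z/4$--edges and one of $\Z/6$--edges, each $\Z/4$-- or $\Z/6$--vertex two orbits of edges of its own type --- together with the induced maps from the first step; one then argues, as in lemma~\ref{mod2ranks} and the proposition preceding it, that the ranks of all these differentials stabilise and become periodic of period $4$ in $q$ above the virtual cohomological dimension. The one class that never cancels is $\Homol_q(\langle -1\rangle;\Z) = \Z/2$, carried in each odd degree by every $2$--cell orbit, of which $\beta_2$ survive; this is the source of the constant summand $(\Z/2)^{\beta_2}$ in all degrees $q > 1$, and of the $\Z^{\beta_2}$ in $E^\infty_{2,0}$ contributing to $\Homol_2$. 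Reading off $E^\infty$ then leaves the usual extension problems --- $\Z/4$ versus $(\Z/2)^2$ and $\Z/8$ versus $\Z/4 \oplus \Z/2$ in the odd degrees --- which are resolved exactly as in subsection~\ref{results}, by repeating the computation with $\Z/2$--coefficients and comparing through the Universal Coefficient Theorem; the genuine $\Z/8$ in degrees $q \equiv 3 \pmod 4$ is forced by the surviving summand $\Homol_{4k+3}(\mathrm{SL}_2(\F_3);\Z)_{(2)} \cong \Z/8$, and the $\Z/4$ in degrees $q \equiv 2 \pmod 4$ by a surviving $\Homol_{q-1}(\Z/4;\Z) = \Z/4$ coming from an edge.

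The hard part will be precisely this last bookkeeping. Because the occurring stabilisers are now the non-split binary groups $Q_{12}$ and $\mathrm{SL}_2(\F_3)$ rather than the semidirect products of lemma~\ref{finiteSubgroups}, the matrices for the $d^1$-- and $d^2$--differentials carry genuine $\Z/4$-- and $\Z/8$--entries, so they must be tracked at the level of finite abelian groups --- not merely of $\F_2$--ranks --- and any non-trivial d\'evissage in degrees above $2$ ruled out. The periodicity of the binary stabilisers is what makes the final answer periodic; confirming that the surviving $\Z/8$ and $\Z/4$ are not further reduced, and that the $d^2_{2,q}$ behave uniformly in $q$, is the delicate point, and is most safely carried out with explicit free resolutions (as for $\Afour$ in section~\ref{Appendix: The low terms of a free resolution for the alternating group on 4 objects}, with analogues for $Q_{12}$ and $\mathrm{SL}_2(\F_3)$) or with a computer implementation of the equivariant spectral sequence.
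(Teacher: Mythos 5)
Your proposal and the paper part ways at the very first step: the paper's entire proof of this proposition is the single sentence that the results ``have been computed with HAP \cite{HAP} from the cell complex information computed with \cite{BianchiGP}'' --- that is, the machine assembles a free $\Z[\SLO]$--resolution directly from the quotient cell complex together with resolutions of the (binary) cell stabilisers, and reads off the integral homology, so that no spectral-sequence bookkeeping and no d\'evissage ever appear. Your route --- rerunning the equivariant spectral sequence of section~\ref{SpecSeq} for $\SLO$, splitting off the $3$--primary part via the central extension $1 \to \{\pm 1\} \to \SLO \to \PSLO \to 1$, and tracking the $2$--primary part over the whole quotient complex --- is therefore genuinely different, and it is more informative: the localisation argument for the $3$--torsion is clean and correct, and your inventory of stabiliser preimages, of their period-$4$ homology ($\Homol_{4k+3}(Q_{12};\Z)\cong\Z/12$, $\Homol_{4k+3}(\mathrm{SL}_2(\F_3);\Z)\cong\Z/24$) and of the adjacency multiplicities agrees with the corollaries to lemma~\ref{geometricRigiditytheorem}. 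What it buys, compared with the paper, is an explanation of \emph{why} the answer is $4$--periodic and where the $\Z/8$ and $\Z/4$ come from; what it costs is that, as written, it remains a plan rather than a proof: the decisive items --- the ranks of $d^1_{1,q}$ and $d^1_{2,q}$ over the full quotient complex, the constancy in odd $q$ of the $\F_2$--rank $\beta_2$ of $\ker d^1_{2,q}$, and the d\'evissage isolating $\Z/8$ against $\Z/4\oplus\Z/2$ --- are exactly the computations you defer to ``explicit free resolutions or a computer implementation'', i.e.\ to the very machine calculation the paper invokes.

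Two points in your outline deserve correction, both of which actually simplify your task. First, the feared non-degeneration at $E^2$ is almost absent: every occurring vertex stabiliser ($\Z/2$, $\Z/4$, $\Z/6$, $Q_{12}$, $\mathrm{SL}_2(\F_3)$) has vanishing integral homology in positive even degrees, so $E^2_{0,q+1}=0$ for $q$ odd and $E^2_{2,q}=0$ for $q>0$ even; hence the only possibly non-trivial higher differential is $d^2_{2,0}$, exactly as in the $\PSLO$ case. Second, and for the same reason, the constant summand $(\Z/2)^{\beta_2}$ in \emph{even} total degrees $q$ cannot be carried by the $2$--cells, since $E^1_{2,q-2}=\bigoplus\Homol_{q-2}(\Z/2;\Z)=0$ for $q-2>0$ even; in those degrees it must be delivered by surviving edge classes in $E^\infty_{1,q-1}$, so the heuristic ``one $\Z/2$ per surviving $2$--cycle in every degree'' is a statement about the final answer, not about a single location in the spectral sequence, and has to be verified in both parities.
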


The results of this proposition have been computed with HAP \cite{HAP} from the cell complex information computed with \cite{BianchiGP}.

\section{\emph{K}-theory} \label{K-theory} 
With the above information about the action of the \mbox{Bianchi} groups, we can, analogously to the way of \cite{Sanchez-Garcia}, compute the Bredon homology of the \mbox{Bianchi} groups, from which we can deduce their equivariant $K$-homology. 
The results of the computations~\cite{RahmThesis} are the following.

\begin{theorem} \label{K-homology}
Let $\beta_2$ be the Betti number specified in proposition {\rm \ref{non-EuclideanPrincipalE2page}}. For $\ringO_{-m}$ principal, the equivariant $K$-homology of $\Gamma := \mathrm{PSL}_2(\ringO_{-m})$ is isomorphic to \small
$$ \begin{array}{l|cccccc}

                                &m=1   & m=2            & m=3              & m=7  & m= 11            & m \in \{19,43,67,163\} \\ 

\hline &&&&&& \\

K^\Gamma_0(\underbar{E} \Gamma) & \Z^6 &\Z^5 \oplus \Z/2 \Z& \Z^5 \oplus \Z/2 \Z & \Z^3 & \Z^4 \oplus \Z/2 \Z & \Z^{\beta_2} \oplus \Z^3 \oplus \Z/2 \Z \\ 

\\

K^\Gamma_1(\underbar{E} \Gamma) & \Z   & \Z^3           &  0               & \Z^3 &  \Z^3            & \Z \oplus \Z^{\beta_2 +1}. \\ 

   \end{array}$$ \normalsize 
\end{theorem}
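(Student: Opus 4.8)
The plan is to read off $K^\Gamma_*(\underbar{E}\Gamma)$ from the Atiyah--Hirzebruch-type spectral sequence that computes equivariant $K$-homology out of Bredon homology with coefficients in the complex representation ring functor, as in S\'anchez-Garc\'ia~\cite{Sanchez-Garcia}, and to make that computation effective with the cell complexes and stabiliser data already assembled in this paper. First I would fix a model for the classifying space $\underbar{E}\Gamma$ for proper actions: since every stabiliser in a Bianchi group of a point of $\Hy$ is finite (lemma~\ref{finiteSubgroups}), $\Hy$ is such a model, and in each principal ideal domain case $m\in\{1,2,3,7,11,19,43,67,163\}$ the Fl\"oge (equivalently Mendoza) complex $X$ of section~\ref{Floege cellular complex} is a cocompact two-dimensional $\Gamma$-CW model obtained from $\Hy$ by an equivariant deformation retraction, all of whose cell stabilisers lie among the finite groups $\{1\}$, $\Z/2$, $\Z/3$, $\Kleinfourgroup$, $\Sthree$, $\Afour$.

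Second, I would invoke the spectral sequence with $E^2_{p,q}=\Homol^\Gamma_p\bigl(\underbar{E}\Gamma;\,\underline{K}_q\bigr)$ converging to $K^\Gamma_{p+q}(\underbar{E}\Gamma)$, where the coefficient system is $\underline{K}_q\colon \Gamma/H\mapsto K_q(C^*_r H)$, equal to $R_{\C}(H)$ for $q$ even and $0$ for $q$ odd. Because $X$ is two-dimensional, $E^2_{p,q}=0$ unless $p\in\{0,1,2\}$; the differential $d^2\colon E^2_{p,q}\to E^2_{p-2,q+1}$ changes the parity of $q$ and hence vanishes, so the sequence collapses at $E^2$. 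The resulting filtration gives $K^\Gamma_1(\underbar{E}\Gamma)\cong\Homol^\Gamma_1(\underbar{E}\Gamma;\underline{R}_{\C})$ together with a short exact sequence $0\to\Homol^\Gamma_0(\underbar{E}\Gamma;\underline{R}_{\C})\to K^\Gamma_0(\underbar{E}\Gamma)\to\Homol^\Gamma_2(\underbar{E}\Gamma;\underline{R}_{\C})\to 0$, which splits since $\Homol^\Gamma_2$ is the kernel of a homomorphism of free abelian groups and hence free. So the task is reduced to computing three Bredon homology groups.

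Third, I would compute these from the Bredon chain complex $C_p=\bigoplus_{\sigma\in{}_\Gamma\backslash X^p}R_{\C}(\Gamma_\sigma)$, whose differentials are the alternating sums, weighted by the incidence numbers of $\Gamma\backslash X$, of the induction homomorphisms $R_{\C}(\Gamma_\sigma)\to R_{\C}(\Gamma_\tau)$ attached to the cell stabiliser inclusions of subsection~\ref{the differentials} (so the $2$- and $3$-cells, which are trivially stabilised by lemma~\ref{pointwiseStabilizers}, contribute copies of $\Z$). Here $R_{\C}(\{1\})=\Z$, $R_{\C}(\Z/2)=\Z^2$, $R_{\C}(\Z/3)=\Z^3$, $R_{\C}(\Kleinfourgroup)=\Z^4$, $R_{\C}(\Sthree)=\Z^3$, $R_{\C}(\Afour)=\Z^4$, and the induction maps between these are the standard ones read off the character tables. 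Splitting off the trivial representation, $R_{\C}(H)=\Z\oplus\widetilde{R}_{\C}(H)$, separates $\Homol_*(\Gamma\backslash X;\Z)$ --- which carries the Betti numbers of proposition~\ref{non-EuclideanPrincipalE2page} --- from a reduced part supported on the $2$- and $3$-torsion subcomplexes of section~\ref{axisGraphReduction}. For the Euclidean cases $m=1,2,3,7,11$ one runs the computation on Bianchi's fundamental polyhedra; for $m\in\{19,43,67,163\}$ one uses the cell data of~\cite{BianchiGP} (the $d^1$-matrices displayed for $m=67$ in section~\ref{Results for the homological torsion} illustrate the shape of the input), and the four answers coincide because the $2$- and $3$-torsion subgraphs of these four Bianchi groups are homeomorphic (figure~\ref{FloegeFundamentalDomains}) and, by the reasoning underlying lemma~\ref{E2invarianceUnderHomeomorphisms}, determine the reduced part of the Bredon homology.

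The conceptual set-up is routine; the main obstacle I expect is the bookkeeping in the third step: writing down correctly the induction matrices $R_{\C}(\Z/2)\to R_{\C}(\Afour)$, $R_{\C}(\Z/2)\to R_{\C}(\Kleinfourgroup)$, $R_{\C}(\Z/3)\to R_{\C}(\Sthree)$ and the like, tracking the incidence signs of $\Gamma\backslash X$, and then carrying out the integral linear algebra (Smith normal form) that pins down the single $\Z/2$ summand of $K^\Gamma_0(\underbar{E}\Gamma)$ and verifies that there is no further torsion. A secondary point is to fix the free ranks --- in particular that $K^\Gamma_1(\underbar{E}\Gamma)$ has rank $\beta_2+2$ --- which one does by combining the computed homology of $\Gamma\backslash X$ with the vanishing of the naive Euler characteristic~\cite{Vogtmann}, exactly as the Betti numbers are reconciled after proposition~\ref{non-EuclideanPrincipalE2page}.
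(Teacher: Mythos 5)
Your proposal is correct and follows essentially the same route as the paper, which computes the Bredon homology of $\Gamma$ over the Fl\"oge/Mendoza complex with coefficients in the complex representation ring functor, analogously to S\'anchez-Garc\'ia, and then passes to $K^\Gamma_*(\underbar{E}\Gamma)$ via the collapsing Atiyah--Hirzebruch spectral sequence (the explicit Smith-normal-form computations being deferred to the author's thesis). Your observations that $d^2$ vanishes for parity reasons and that the extension for $K^\Gamma_0$ splits because $\Homol^\Gamma_2$ is free are exactly the points needed to make that reduction work.
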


The remainder of the equivariant $K$-homology of $\Gamma$ is given by Bott $2$-periodicity.
By the Baum/Connes conjecture, which holds for the \mbox{Bianchi} groups \cite{JulgKasparov},
we obtain the $K$-theory of the reduced $C^*$-algebras of the \mbox{Bianchi} groups as isomorphic images.

\section{Appendix: The low terms of a free resolution \mbox{for the alternating group on 4 objects}}
\label{Appendix: The low terms of a free resolution for the alternating group on 4 objects}

We will use Wall's Lemma to construct a free resolution for $\Afour$, and compute its three differentials of lowest degrees explicitly.
This resolution will help us determine the maps induced on homology by inclusions into vertex stabilisers of type $\Afour$.

So let us recall Wall's lemma.
Given a group extension $1 \to K \to G \to H \to 1$,
and free resolutions $B$ for $K$, and $C$ for $H$, 
we construct a free resolution for $G$ in terms of the following double chain complex.
For any $s \in \N \cup \{0\}$, denote by $\alpha_s$ the number of generators of the free $\Z[H]$--module $C_s$. 
We define $D_s$ as the direct sum of $\alpha_s$ copies of $\Z[G] \otimes_K B_s$.
Then we have an augmentation of $D_s$ onto the direct sum of $\alpha_s$ copies of $\Z[H]$,
which we will identify with $C_s$, and write $\varepsilon_s: D_s \to C_s$. 
If $A_{r,s}$ is the submodule of $D_s$ which is the direct sum of $\alpha_s$ copies of 
$\Z[G] \otimes_K B_r$, then $A_{r,s}$ is a free $G$-module, and $D_s$ is the direct sum of the $A_{r,s}$.

\begin{Lem}[C.T.C. Wall \cite{Wall}]
There exist $G$-maps $d^k_{r,s} : A_{r,s} \to A_{r+k-1,s-k}$ for $k \geq 1$, $s \geq k$ such that
\begin{itemize}
\item $\varepsilon_{s-1} \circ d^1_{0,s} = d^C_s \circ \varepsilon_s : A_{0,s} \to C_{s-1}$ where $d^C$ denotes the differential in $C$,
\item $\sum\limits_{i=0}^k d^{k-i} \circ d^i = 0$, \qquad for each $k$,
where $d^k_{r,s}$ is interpreted as zero if $r = k = 0$, or if $s < k$.
\end{itemize}

\end{Lem}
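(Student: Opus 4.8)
The plan is to view the maps $d^k$ as the homogeneous components of a single differential $d=\sum_{k\ge 0}d^k$ on the total object $D=\bigoplus_{r,s}A_{r,s}$, with $d^0$ the internal differential of $B$ acting on each column; then the identity $\sum_{i=0}^{k}d^{k-i}\circ d^i=0$ for every $k$ is precisely $d\circ d=0$, sorted by the superscript. I would construct the $d^k$ one column at a time by repeated use of the comparison theorem (the fundamental lemma of homological algebra). The preliminary point to record is that, for each fixed $s$, the complex $(A_{\bullet,s},d^0)=\bigoplus_{\alpha_s}\bigl(\Z[G]\otimes_{\Z[K]}B_\bullet\bigr)$ is a complex of \emph{free} $\Z[G]$-modules which, via the augmentation $\varepsilon_s$ coming from $B_\bullet\to\Z$ together with $\Z[G]\otimes_{\Z[K]}\Z\cong\Z[H]$, is a free $\Z[G]$-resolution of $C_s$, the latter regarded as a $\Z[G]$-module through $G\twoheadrightarrow H$.

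First I would build $d^1$. The differential $d^C_s\colon C_s\to C_{s-1}$ is $\Z[H]$-linear, hence $\Z[G]$-linear, so applying the comparison theorem to the column resolutions $(A_{\bullet,s},d^0)\to C_s$ and $(A_{\bullet,s-1},d^0)\to C_{s-1}$ produces a chain map $d^1_{\bullet,s}\colon A_{\bullet,s}\to A_{\bullet,s-1}$ covering $d^C_s$; this gives $\varepsilon_{s-1}\circ d^1_{0,s}=d^C_s\circ\varepsilon_s$, and, being a chain map, it satisfies $d^0d^1=\pm d^1d^0$. After the customary Koszul sign adjustment (for instance twisting $d^1$ on $A_{\bullet,s}$ by $(-1)^s$) this reads $d^0d^1+d^1d^0=0$, the relation for $k=1$.

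Then I would construct $d^k$ for $k\ge 2$ by induction. Assuming $d^1,\dots,d^{k-1}$ chosen so that $\sum_{i=0}^{j}d^{j-i}d^i=0$ for all $j\le k-1$, set $\phi:=-\sum_{i=1}^{k-1}d^{k-i}d^i$, a $\Z[G]$-linear family $A_{r,s}\to A_{r+k-2,s-k}$; one must solve $d^0d^k+d^kd^0=\phi$ (signs as dictated by the conventions) for $d^k\colon A_{r,s}\to A_{r+k-1,s-k}$. Expanding the lower relations shows that $\phi$ (anti)commutes with $d^0$, so for each $s$ it is a chain map of degree $k-2$ from the resolution $(A_{\bullet,s},d^0)$ to the resolution $(A_{\bullet,s-k},d^0)$; moreover it induces the zero map on $\Homol_0$ --- automatically when $k\ge 3$, since it then raises homological degree, and when $k=2$ because there it reduces to $-d^C\circ d^C=0$. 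Since the source consists of free, hence projective, modules and the target is exact in positive degrees, the comparison theorem provides a chain nullhomotopy of $\phi$; its components are exactly the required $G$-maps $d^k_{r,s}$, completing the relation $\sum_{i=0}^{k}d^{k-i}d^i=0$. For fixed $(r,s)$ only the $d^k$ with $k\le s$ can be nonzero, so $d=\sum_k d^k$ is defined termwise and there is no convergence issue.

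The only genuine obstacle is the bookkeeping in the inductive step that verifies $\phi$ is a signed chain map for $d^0$ --- the usual ``higher Jacobi'' computation, where the sign conventions have to be fixed coherently from the outset; it nevertheless follows mechanically by substituting the lower relations $\sum_{i=0}^{j}d^{j-i}d^i=0$. Everything else is a column-by-column application of the comparison theorem, $d^1$ being the only step that lifts a nontrivial map.
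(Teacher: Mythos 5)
The paper does not prove this lemma at all --- it is quoted verbatim from Wall's article and used as a black box --- so there is no in-paper argument to compare against. Your proposal is correct and is essentially Wall's original proof: the key observation that each column $(A_{\bullet,s},d^0)\to C_s$ is a free $\Z[G]$-resolution of $C_s$ (because $\Z[G]$ is free, hence flat, over $\Z[K]$ and $\Z[G]\otimes_{\Z[K]}\Z\cong\Z[H]$) is exactly what makes the column-by-column comparison-theorem construction of $d^1$ and the inductive null-homotopy construction of the higher $d^k$ go through, and your treatment of the $k=2$ case (vanishing on $\Homol_0$ via $d^C\circ d^C=0$) versus $k\geq 3$ (automatic by degree reasons) is the right case split. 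The only step you defer --- checking that $\phi=-\sum_{i=1}^{k-1}d^{k-i}d^i$ is a $d^0$-cycle in the Hom-complex --- does indeed follow mechanically from the lower relations together with $(d^0)^2=0$, so nothing is missing.
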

Finally, we let $A$ denote the direct sum of the $A_{r,s}$ graded by dim$A_{r,s} = r +s$, and let $d := \sum\limits_k d^k$.
\begin{thm}[C.T.C. Wall \cite{Wall}]
$(A,d)$ is acyclic, and so yields a free resolution for $G$.
\end{thm}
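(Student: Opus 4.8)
The plan is to view $(A,d)$ as the total complex of the double complex $(A_{r,s})$, with the ``$B$-differential'' $d^0$ running in the $r$-direction and the family $(d^k)_{k\ge 1}$ in the $s$-direction, and then to run the standard filtration argument; the one structural input is that $\Z[G]$ is free --- hence flat --- as a $\Z[K]$-module (via a set of coset representatives). First I would record that $A$ is a chain complex of free $\Z[G]$-modules: each $A_{r,s}$ is a direct sum of $\alpha_s$ copies of $\Z[G]\otimes_K B_r$, and since $B_r$ is $\Z[K]$-free and $\Z[G]$ is $\Z[K]$-free, $\Z[G]\otimes_K B_r$ is $\Z[G]$-free; moreover the relations $\sum_{i=0}^{k}d^{k-i}\circ d^i=0$ of Wall's lemma, sorted according to how much they lower the index $s$, are exactly the statement $d\circ d=0$ for $d=\sum_k d^k$. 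The augmentation of $A$ will be $\varepsilon_0$ followed by the augmentation $C_0\to\Z$.

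Next I would filter $A$ by $F_pA:=\bigoplus_{s\le p}A_{r,s}$. Every $d^k$ with $k\ge 0$ maps $A_{r,s}$ into $A_{r+k-1,s-k}$, so it never raises $s$; hence each $F_pA$ is a subcomplex. The filtration is exhaustive, bounded below ($s\ge 0$) and, in each total degree $n=r+s$, finite ($0\le s\le n$), so the associated homology spectral sequence converges to $H_*(A)$. On the $E^0$-page only the $s$-preserving part $d^0$ of $d$ survives, and on $\bigoplus_r A_{r,p}$ this is the differential of $\alpha_p$ copies of the complex $\Z[G]\otimes_K B_\bullet$.

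Then I would compute $E^1$. Since $B_\bullet$ is a free resolution of $\Z$ over $\Z[K]$ and $\Z[G]$ is flat over $\Z[K]$, the homology of $\Z[G]\otimes_K B_\bullet$ equals $\mathrm{Tor}^{\Z[K]}_q(\Z[G],\Z)$, which is $\Z[G]\otimes_K\Z\cong\Z[H]$ for $q=0$ (right-exactness of $\otimes$) and $0$ for $q>0$ (flatness). Hence $E^1$ is concentrated in the row $q=0$, where $E^1_{p,0}\cong\bigoplus^{\alpha_p}\Z[H]=C_p$, the isomorphism being induced by $\varepsilon_p$. The compatibility $\varepsilon_{s-1}\circ d^1_{0,s}=d^C_s\circ\varepsilon_s$ then identifies the induced $d^1\colon E^1_{p,0}\to E^1_{p-1,0}$ with $d^C$, so $E^2_{p,0}=H_p(C)$, which is $\Z$ for $p=0$ and $0$ for $p>0$, while $E^2_{p,q}=0$ for $q>0$. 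For degree reasons $E^2=E^\infty$, so $H_0(A)=\Z$ and $H_n(A)=0$ for $n>0$; together with the freeness of the $A_{r,s}$ this exhibits $A$ as a free resolution of $\Z$ over $\Z[G]$.

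The main obstacle is the bookkeeping: keeping the two gradings and the filtration consistent, and in particular being careful that the relevant complex is the truncation $\Z[G]\otimes_K B_\bullet$ whose $H_0$ is $\Z[G]\otimes_K\Z$ (not $\Z$ itself) while its higher homology vanishes only by flatness of $\Z[G]$ over $\Z[K]$ --- and matching $d^1$ on $E^1$ to $d^C$ through the augmentations $\varepsilon$. The existence of the maps $d^k$ with the two listed properties --- the content of the preceding lemma, obtained by an obstruction-theoretic induction on $k$ and $s$ using projectivity of the $A_{r,s}$ --- may be assumed throughout.
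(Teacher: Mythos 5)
Your argument is correct: the filtration by the $C$-degree $s$ is preserved by every $d^k$, the columns of the associated graded are $\alpha_p$ copies of $\Z[G]\otimes_{\Z[K]}B_\bullet$ whose homology collapses to $C_p$ by freeness (hence flatness) of $\Z[G]$ over $\Z[K]$, and the compatibility $\varepsilon_{s-1}\circ d^1_{0,s}=d^C_s\circ\varepsilon_s$ identifies the $E^1$-differential with $d^C$, so the spectral sequence degenerates to $H_*(C)$. The paper states this theorem without proof, citing Wall; your filtration/spectral-sequence argument is the standard proof of that cited result and matches Wall's own approach, so there is nothing to correct.
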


Let $t$ be a generator of $\Z/n$. Let $F(n)$ be the periodic resolution of $\Z$ over $\Z[\Z/n]$ given by 
$$\xymatrix{ \ldots \ar[r]^{t-1 \qquad}  & \Z[\Z/n] \ar[rr]^{t^{n-1}+\ldots+t+1} & & \Z[\Z/n] \ar[r]^{t-1}  & \Z[\Z/n] \ar[rrr]^{\rm augmentation} & & & \Z.}$$
We consider the group extension $1 \to \Kleinfourgroup \to \Afour \to \Z/3 \to 1$, 
the resolution $F(3)$ for $\Z/3$ and the resolution $F(2) \otimes F(2)$ for $\Kleinfourgroup$.
Then, $A_{r,s} = \Z[\Afour] \otimes_{\Z[\Kleinfourgroup]} (\Z[\Kleinfourgroup])^r \cong (\Z[\Afour])^r$.
\\
Let us use the cycle type notation for the elements in the alternating group on four letters.
Then in low degrees, the differential of $F(2) \otimes F(2)$ becomes 
$$d^0_{1,s} = \left( (12)(34) -1, \quad (14)(23) -1 \right),$$
$$d^0_{2,s} = \begin{pmatrix} (12)(34) +1 & 1 -(14)(23) & 0 \\ 0 & (12)(34) -1 & (14)(23) +1 \end{pmatrix},$$
$$d^0_{3,s} = \begin{pmatrix} (12)(34) -1 & (14)(23) -1 & 0 & 0 \\ 0 & -(12)(34) -1 & (14)(23) +1 & 0 \\ 0 & 0 & (12)(34) -1 & (14)(23) -1 \end{pmatrix},$$
for all $s \in \N$.
At the same time, we can set $d^1_{0,2k} = \left( (132) + (123) +1 \right)$ 
and \\ \mbox{$d^1_{0,2k+1} = \left( (123) -1 \right)$} for all $k \in \N$, which satisfies the first condition in Wall's Lemma. 
Further, we set $$d^1_{1,1} = \begin{pmatrix} 1 & (142) \\ -(123) & (134)+1 \end{pmatrix},$$
$$d^1_{1,2} = \begin{pmatrix} -1 -(123) +(134) -(124) & (234) -(123) \\ (142)-(132) & (142)-1+(124)+(143) \end{pmatrix},$$
$$d^1_{2,1} = \begin{pmatrix} -1 & 0&  -(123) \\ 0 & (142)-1 & (243) \\ (134) & 0 & -(123)-1 \end{pmatrix}.$$

We sum up, and obtain the low degree terms of a free resolution for $\Afour$:
$$\xymatrix{ \ldots \ar[r]  & (\Z[\Afour])^{10} \ar[rr]^{d_3} & & (\Z[\Afour])^6 \ar[rr]^{d_2} & & (\Z[\Afour])^3  \ar[rr]^{d_1} & & \Z[\Afour] \to 0,}$$
where $d_1 = (d^1_{0,1}, \quad d^0_{1,0}) = \left( (123)-1, \quad (12)(34) -1, \quad (14)(23)-1 \right),$
\begin{center} $d_2 = $  $\begin{pmatrix} 
d^1_{0,2} & d^0_{1,1} & 0  \\ 
0 & d^1_{1,1} &   d^0_{2,0}  \end{pmatrix}$  
\mbox{$ = $ \small $\begin{pmatrix} (132) +(123)+1 & (12)(34) -1 & (14)(23)-1 & 0 & 0& 0 \\
0 & 1 & (142) & (12)(34) -1 & (14)(23) -1 & 0  \\ 
0 & -(123) & (134)+1 & 0 & (12)(34) -1 & (14)(23) -1 \end{pmatrix},$}  \normalsize \end{center}

\begin{center}and we assemble analogously $d_3 = $  $\begin{pmatrix} 
d^1_{0,3} & d^0_{1,2} & 0  & 0\\ 
0 & d^1_{1,2} &   d^0_{2,1} & 0\\
0 & d^2_{1,2} & d^1_{2,1} & d^0_{3,0} \end{pmatrix}$. 
\end{center}

\bibliographystyle{amsplain}
\bibliography{homologicalTorsionR}

\end{document}